\numberwithin{equation}{section}
\begin{document}

\newtheorem{theorem}{Theorem}[section]
\newtheorem{prop}[theorem]{Proposition}
\newtheorem{lemma}[theorem]{Lemma}
\newtheorem{definition}[theorem]{Definition}
\newtheorem{corollary}[theorem]{Corollary}
\newtheorem{example}[theorem]{Example}
\newtheorem{remark}[theorem]{Remark}

\def \RN  {\mathbb{R}^n}
\def \R  {\mathbb{R}}

\newcommand{\ra}{\rightarrow}
\newcommand{\ccc}{{\mathcal C}}
\newcommand{\one}{1\hspace{-4.5pt}1}
\newcommand{\omegareverse}{\frac{\displaystyle 1}{\displaystyle
|\Omega|}}


\newcommand{\XX}{\mathcal {X}}
\newcommand{\GG}{\mathop \mathcal{G} \limits^{    \circ}}



\title[Frames decompositions on Hardy spaces]
 {Frame decomposition and radial maximal semigroup
  characterization of Hardy spaces associated to\\
   operators }
\thanks{{\it 2010
Mathematical Subject Classification.} 42B30, 42C15, 42B25.}
\thanks{{\it Key words and phrases:}  Frame decomposition, Hardy space,
radial maximal function, heat semigroup,
Gaussian estimate, functional calculus.}


\author{  Xuan Thinh Duong }

\address{Department of Mathematics, Macquarie University, NSW, 2109, Australia}

\email{xuan.duong@mq.edu.au}

\author{  Ji Li }

\address{Department of Mathematics, Macquarie University, NSW, 2109, Australia}

\email{ji.li@mq.edu.au}

\author{ Liang Song }

\address{Department of Mathematics, Sun Yat-sen University, China}

\email{songl@mail.sysu.edu.cn}

\author{  Lixin Yan }

\address{Department of Mathematics, Sun Yat-sen University, China}

\email{mcsylx@mail.sysu.edu.cn}


\begin{abstract} Let $L$ be the generator of an analytic semigroup whose kernels satisfy Gaussian upper bounds and H\"older's continuity.
Also assume that $L$ has a bounded holomorphic functional calculus on $L^2(\mathbb{R}^n)$.
In this paper, we construct a frame decomposition for the functions belonging to the Hardy space  $H_{L}^{1}(\mathbb{R}^n)$
associated to $L$, and for functions in the Lebesgue spaces $L^p$, $1<p<\infty$. We then show that
the corresponding $H_{L}^{1}(\RN)$-norm (resp. $L^p(\RN)$-norm) of a function $f$ in terms of the frame coefficients is equivalent to the  $H_{L}^{1}(\RN)$-norm (resp. $L^p(\RN)$-norm) of $f$.
As an application of  the frame decomposition, we establish the radial maximal
semigroup characterization of the Hardy space  $H_{L}^{1}(\mathbb{R}^n)$ under the extra condition
of Gaussian upper bounds on the gradient of the heat kernels of $L$.
\end{abstract}

\maketitle


\tableofcontents

\allowdisplaybreaks

\section{Introduction and statement of results}
\setcounter{equation}{0}

Wavelet analysis has played an important role in many different branches of science and technology since
it provides a simple and efficient way, in addition to Fourier series and integrals,  to analyse functions and distributions.
The wavelet series decompositions are effective
expansion by unconditional
 bases in the standard Lebesgue spaces $L^p(\RN)$, $1<p<\infty$, as well as many other spaces such as
  Hardy spaces, BMO spaces, Besov spaces which arise in the theory of harmonic analysis. A function $f$
   (may be tempered distributions in some cases) in these various spaces can thus be written in the form
\begin{align}\label{wavelet expansion}
 f(x) =\sum_{\lambda \in \Lambda} \langle f,\psi_\lambda \rangle \psi_\lambda(x),
\end{align}
and the series converges unconditionally to $f(x)$ in the relevant norm. Here
 $\Lambda=\{ \lambda=2^{-j}k+\epsilon2^{-j-1}: j\in\mathbb{Z}, k\in\mathbb{Z}^n,
  \epsilon\in E \}$, where $E=\{0,1\}^n$ excluding $(0,0,\ldots,0)$. The family $\psi_\lambda,
   \lambda\in \Lambda$ is a wavelet basis arising from an $r$-regular multiresolution
   approximation of $L^2(\RN)$. Moreover,
the norms of elements in these various spaces can be equivalently characterized by the corresponding
 norms via coefficients of the expansion in \eqref{wavelet expansion}. To be more precise,
 taking $L^p(\RN)$, $1<p<\infty$ and Hardy space $H^1(\RN)$ for example, we have
$$ \|f\|_{L^p(\RN)} \approx \Big\| \Big\{\sum_{\lambda\in\Lambda}
 |\langle f,\psi_\lambda\rangle|^2 |Q_\lambda|^{-1}\chi_\lambda(\cdot) \Big\}^{1\over2} \Big\|_p $$
and
$$ \|f\|_{H^1(\RN)} \approx \Big\| \Big\{\sum_{\lambda\in\Lambda}
|\langle f,\psi_\lambda\rangle|^2 |Q_\lambda|^{-1}\chi_\lambda(\cdot)
\Big\}^{1\over2} \Big\|_1, $$
where $Q_\lambda$ is the dyadic cube defined by $2^jx-k\in [0,1)^n$
 and $\chi_\lambda(x)$ is the characteristic function of $Q_\lambda$.
 For more details about the wavelet theory, we refer to \cite{Ch, Da, FJW, Me, Meyer}.
 We note that wavelet theory has also been developed in many other settings including that wavelet bases being
 replaced by frames which offer the same service  in many applications. The success of wavelet theory
 lies in the fact that it has had applications in widely differing areas of science, see for
 example \cite{BL, BCMS, Coi, CM,  Da2,DH}  and the references therein.

The classical theory of Hardy spaces on $\RN $ has been a great success and central to the
estimates of singular integrals \cite{St}. Since a number of characterizations of the classical Hardy space can be given via
various estimates of the Laplace operator, one can say that the classical Hardy space
is associated to the Laplace operator.
We note that the Laplace operator has its heat kernel $p_t(x,y)$ given explicitly by the Gaussian kernel,
hence all the heat kernel regularity such as the time derivatives (to all order) and spacial derivatives can be
computed explicitly. The Laplace operator also possesses the conservation property and is non-negative self-adjoint,
therefore it has a bounded functional calculus on $L^2(\RN)$ for bounded measurable functions on $[0, +\infty)$.

In the last decade, a theory of function spaces,
and in particular Hardy spaces, associated to an operator $L$ was developed and studied extensively. This theory has arisen
from the need of studying singular integrals with non-smooth kernels which do not belong to the so-called class
of Calder\'on-Zygmund operators. In this theory, the assumptions on the heat kernel of $L$ and the functional calculus of $L$
play a key role. The weaker these assumptions are, the more operators $L$ are included in the theory but less features and
characterizations of the spaces can be obtained. We now list a number of articles closely related
to the development of this topic but our list is by no means exhaustive.

(i) In \cite{AuDM}, P. Auscher, X.T. Duong and A. M$^{\rm c}$Intosh
introduced  the Hardy space $H_L^1(\mathbb{R}^n)$ associated to an
operator $L$, and obtained a molecular decomposition, assuming that
$L$ has a bounded holomorphic functional calculus on
$L^2(\mathbb{R}^n)$ and the kernel of the heat semigroup $e^{-tL}$
has a pointwise Poisson upper bound. Under the same assumptions on $L$, X.T. Duong and L.X. Yan
introduced the space $BMO_L(\mathbb{R}^n)$ adapted to $L$ and established the
duality of $H_L^1(\mathbb{R}^n)$ and $BMO_{L^*}(\mathbb{R}^n)$ in
\cite{DY1}, \cite{DY2}, where $L^*$ denotes the adjoint operator of
$L$ in $L^2(\mathbb{R}^n)$.
Later,   the Hardy spaces $H_L^p(\mathbb{R}^n)$
for  all $0<p<1$ were  established in \cite{Yan}.

(ii)  P. Auscher,  A. M$^{\rm c}$Intosh  and E. Russ \cite{AMR} established the Hardy spaces $H^p_L,
p\geq 1,$ associated to the Hodge Laplacian on a Riemann manifold with
doubling measure. S. Hofmann and S. Mayboroda \cite{HM} defined the Hardy spaces $H^p_L,
p\geq 1$, associated to a second order divergence form elliptic
operator on $\mathbb{R}^n$ with complex coefficients.
In these
settings, pointwise heat kernel bounds may fail. By making  use of the
notion of ``$L$-cancellation" of molecules, they  studied  the Hardy
space $H^1_L$ including a molecular decomposition, a square function
characterization, its dual space and others properties.

(iii) Later, in \cite{HLMMY}, S. Hofmann {\it et al} developed the theory of $H^1$ and $BMO$ spaces
adapted to a non-negative, self-adjoint operator $L$ whose heat kernel
satisfies the weak Davies-Gaffney bounds, in the setting of a
space of homogeneous type $X$.  For the Hardy space
$H^1_L(X)$, they also obtained an atomic decomposition.  X.T. Duong and J. Li \cite{DL}  extended this line to develop
Hardy spaces $H_L^p(X)$ for $0<p\leq 1$, including a molecular
decomposition, a square function characterization, duality of Hardy
and Lipschitz spaces, and a Marcinkiewicz type interpolation
theorem, where the operator $L$ needs not be a non-negative self-adjoint operator. R.J. Jiang and D.C. Yang \cite{JY2} also extended this line to
Orlicz--Hardy spaces.

(iv) X.T. Duong, J. Li and L.X. Yan \cite{DLY} established a discrete characterization
   of weighted
 Hardy spaces    $H_{L, S, w}^{p}(X)$
 associated to $L$ in terms of
  the area function characterization, where $L$ is a second order non-negative self-adjoint
 operator  on $L^2(X)$ satisfying the Moser-type condition, and the semigroup $e^{-tL}$ generated by $L$
 satisfies Gassian upper bounds.

 (v) In \cite{SY1}, L. Song and L.X. Yan  used
 a modification of technique due to A. Calder\'on \cite{C} to give an atomic decomposition for the Hardy spaces
 $ H^p_{L,max}(\mathbb{R}^n)$  in terms of the nontangential maximal functions associated with the heat semigroup
 of $L$, where $L$ is a second order non-negative self-adjoint
 operator  on $L^2(X)$ and its heat semigroup satisfying   Gaussian estimates
 on $L^2(\mathbb{R}^n)$.  This leads   eventually to characterizations of Hardy spaces associated to $L$,  via
  atomic decomposition or the nontangential  maximal functions.
    In term of the  radial maximal function
   characterization    of Hardy spaces,
    D.C. Yang and S.B. Yang   \cite{YY}  obtained it    under the additional assumption that the kernel of the heat semigroup satisfies H\"older's continuity.
   In \cite{SY2}, L. Song and L.X. Yan got rid of the additional assumption of \cite{YY} and proved
   the radial maximal function characterization of Hardy spaces associated to $L$.

 (vi)  Recently,  G. Kerkyacharian and P. Petrushev  \cite{KP} introduced a nice frame decomposition (associated to $L$)
 for the Schwarz functions and distributions  and established the Besov and Triebel-Lizorkin spaces associated with
 $L$ in the framework of Dirichlet spaces with a doubling measure $\mu$ satisfying also the reverse doubling condition
 and the non-collapsing condition, where the operator $L$ is self-adjoint and satisfies the small time Gaussian upper
 bound, the H\"older continuity as well as the preservation property  (Markov property) , i.e., $e^{-tL}1=1$. Later, S. Dekel, G. Kerkyacharian, G. Kyriazis, P. Petrushev \cite{DKKP}
obtained a compactly supported frames for spaces of distributions  associated with non-negative self-adjoint operators satisfying the preservation property (Markov property)  on a more general setting: spaces of homogeneous type. The development of such frames is important in a situation where no additional structures such as translation invariance or a dilation operator are present.


So far, the main characterisations of Hardy spaces associated to operators are obtained via area integral estimates, atomic
or molecular decompositions, and  maximal functions. We observe that the frame structure is absent so far  for  these Hardy spaces when $L$ is either
non-selfadjoint or $e^{-tL}$ does not satisfy the preservation property  (Markov property) or both.

The aim of this paper is to
obtain a frame decomposition for the functions belonging to the Hardy space  $H_{L}^{1}(\RN)$
associated to $L$, as well as for the functions in the Lebesgue spaces $L^p$, $1<p<\infty$, where the corresponding $L^1$
 norm  (resp. $L^p$-norm) of a function $f$ in terms of the frame coefficients is equivalent to the  $H_{L}^{1}(\RN)$-norm (resp. $L^p$-norm) of $f$ (see Theorems
\ref{main1} and \ref{theorem  wavelet expansion of H1} below).
As an application, we establish the radial maximal semigroup characterization of the Hardy space  $H_{L}^{1}(\mathbb{R}^n)$ by using  the frame decomposition
(see Theorem \ref{th5.1} below).

We now state our assumptions and main results.

Let  $L$ be a linear operator of type $\omega$
 ($\omega<\pi/2$), which is one-one with dense range on $L^2({\mathbb R}^{n})$, hence $L$ generates a holomorphic semigroup
$e^{-zL}$, $0\leq |{\rm Arg}(z)|<\pi/2-\omega$ (for more details we refer the readers to the beginning of Section 2).  The following shall
be assumed throughout the paper  unless otherwise specified:

\smallskip

\noindent
${\bf(H 1)}$ The operator $L$ has a bounded
$H_{\infty}$-calculus on  $L^2({\mathbb R}^{n})$. That is, there exists
$c_{\nu,2}>0$ such that  $b(L)\in {\mathcal L}(L^2, L^2)$,  and for
$b\in H_{\infty}(S^0_{\nu}):$
\begin{eqnarray*}
\|b(L)f\|_2\leq c_{\nu,2}\|b\|_{{\infty}}\|g\|_2
\end{eqnarray*}
\noindent
for any  $f\in L^2({\mathbb R}^{n})$. See Section 2 for the definition of $H_{\infty}(S^0_{\nu})$ and for more details of this assumption.

\smallskip

\noindent ${\bf(H 2)}$ The analytic semigroup $\{e^{-zL}\}$, $|Arg(z)|<\pi/2-\omega$, is represented by the  kernel  $p_z(x,y)$
which  satisfies the following  Gaussian upper bound
$$
|p_{z}(x,y)| \leq C_\theta \frac{1}{|z|^{n/2} } \exp\Big(-{|x-y|^2\over
c\,|z|}\Big)
\leqno{(GE)}
$$
for all $x,y\in {\mathbb R}^{n},$ $|Arg(z)|<\pi/2-\theta$ for $\theta>\omega$.



\smallskip

\noindent ${\bf(H 3)}$ The analytic semigroup $\{e^{-zL}\}$, $|Arg(z)|<\pi/2-\omega$, is represented by the  kernel  $p_z(x,y)$
which  satisfies the following  regularity
$$
|p_{z}(x,y)-p_{z}(x',y)|+|p_{z}(y,x)-p_{z}(y,x')| \leq C_\theta \Big( {|x-x'|\over |z|^{1/2}+|x-y| }\Big)^\gamma   \frac{1}{|z|^{n/2} } \exp\Big(-{|x-y|^2\over
c\,|z|}\Big)
$$
for some $\gamma\in(0,1]$ and for all $x, x', y\in {\mathbb R}^{n}$ with $2|x-x'|\leq |z|^{1/2}+|x-y| $,  $|Arg(z)|<\pi/2-\theta$ for $\theta>\omega$.


Suppose $\zeta\in H(S^0_{\nu})$ with $\zeta\not\equiv0$ and
$$ |\zeta(z)|\leq C{|z|^\alpha\over
1+|z|^{\beta}}
$$
where $z\in S^0_{\nu},\ \  \alpha>0, \  \beta>\alpha+n+\gamma+3$ in which
$n$ is the dimension and $\gamma$ is the constant in the assumption ${\bf(H 3)}$. Put
\begin{align}\label{varphi}
q(z)=z^2 \zeta^2(z), \  q_t(z)=q(tz).
\end{align}

Let $\delta$ be a constant satisfying $1<\delta<2$. For each $j$, let $I_j$ denote the net of $\delta$-dyadic cubes with side-length $\delta^{-j-M}$ with a  large fixed positive integer $M$, where one such cube in the net has the origin as the lower left  vertex. And let $\tau$ be the index in $I_j$ and $Q_{\tau}^j$ denote the cube belong to $I_j$, and $y_{Q_\tau^j}$ denote the centre of the cube $Q_{\tau}^j$.

Denote $q_j(x,y)$ the kernel of the operator $q_{\delta^{-2j}}(L)$ with $q_t$ defined as in \eqref{varphi} above  \big(where $\delta$ is to be determined later\big).
Also, set
\begin{align}
\psi_{j,\tau}(x):=\sqrt{\ln\delta}|Q_{\tau}^j|^{1/2}q_j(x,y_{Q_{\tau}^j})\ \ {\rm and} \ \     \psi^*_{j,\tau}(x):=\sqrt{\ln\delta}|Q_{\tau}^j|^{1/2}\overline{q_j(y_{Q_{\tau}^j},x)}
\end{align}
for any $y_{Q_{\tau}^j}\in Q_{\tau}^j$. We point out that $q_j(x,y)$ is continuous in both $x$ and $y$ (see Proposition \ref{propHeat2} below), hence the functions $\psi_{j,\tau}(x)$ and $\psi^*_{j,\tau}(x)$ are well-defined for any $y_{Q_{\tau}^j}\in Q_{\tau}^j$.

Next for any $f\in L^2(\RN)$, we define the auxiliary operator
\begin{align}\label{Tdelta}
T_ \delta(f)(x):=\ln\delta\sum\limits_j\sum\limits_{\tau\in I_j}|Q_{\tau}^j| q_j(x,y_{Q_{\tau}^j})q_{\delta^{-2j}}(L)f(y_{Q_{\tau}^j}),
\end{align}
where $y_{Q_{\tau}^j}$ is any point in the cube $Q_{\tau}^j$ and $$ q_{\delta^{-2j}}(L)(f)(y_{Q_{\tau}^j}) = \int_{\RN} q_j(y_{Q_{\tau}^j},y)f(y)dy.$$
To see that $T_\delta$ is well-defined and bounded on $L^2(\RN)$, we refer to Section 3 below.



The first  main result in this paper is  the following frame decomposition of the functions in $L^p(\mathbb{R}^n)$, $1<p<\infty$.

\begin{theorem}\label{main1}
Assume that $L$ satisfies ${\bf(H 1)}$, ${\bf(H 2)}$ and ${\bf(H 3)}$. Let $1<p<\infty$ and $f\in L^p(\mathbb{R}^n)$.
Then we
have the following frame decomposition of $f$
\begin{eqnarray}\label{wavelet expansion of Lp}
 f=\sum\limits_j\sum\limits_{\tau\in I_j}\langle T_\delta^{-1}f,
 \psi^*_{j,\tau}\rangle  \psi_{j,\tau} \hskip 2cm  in \ \ L^p(\mathbb{R}^n).
\end{eqnarray}
Furthermore, there exist positive constants $C_1$ and $C_2$, such
that
\begin{eqnarray}\label{wavelet norm of Lp}
C_1\big\|f\big\|_p \leq \Big\|\Big( \sum\limits_j
\sum\limits_{\tau\in I_j}|\langle T_\delta^{-1}f, \psi^*_{j,\tau}\rangle|^2
\frac{1}{|Q_{\tau}^j|}\chi_{Q_{\tau}^j}\Big)^{1\over 2}\Big\|_p\leq
C_2\big\|f\big\|_p.
\end{eqnarray}
\end{theorem}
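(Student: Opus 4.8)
The plan is to establish Theorem \ref{main1} by first showing that the auxiliary operator $T_\delta$ is not merely bounded but \emph{invertible} on $L^p(\RN)$ for $1<p<\infty$, provided the parameter $\delta$ is chosen sufficiently close to $1$. The heuristic is that $T_\delta$ is a Riemann-sum discretization of the Calder\'on-type reproducing formula $\ln\delta\sum_j q_{\delta^{-2j}}(L) = \int_0^\infty q_t(L)\,\frac{dt}{t} = c\,\mathrm{Id}$ (after suitable normalization of $\zeta$), which holds on $L^2$ by the $H_\infty$-calculus assumption ${\bf (H1)}$. So I would write $T_\delta = c\,\mathrm{Id} + (T_\delta - \mathcal{Q})$ where $\mathcal{Q} = \ln\delta\sum_j q_{\delta^{-2j}}(L)$, and separately $\mathcal{Q} - c\,\mathrm{Id}$ is controlled by choosing the normalization; the genuinely new error is $E_\delta := T_\delta - \mathcal{Q}$, the difference between replacing $q_{\delta^{-2j}}(L)f$ evaluated at the running variable by its value frozen at the cube center $y_{Q_\tau^j}$ and summed against $q_j(x, y_{Q_\tau^j})|Q_\tau^j|$ versus the true integral. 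The key estimate is that $\|E_\delta\|_{L^p\to L^p}\le \eta(\delta)$ with $\eta(\delta)\to 0$ as $\delta\to 1^+$, so that for $\delta$ close enough to $1$, $T_\delta$ is invertible on $L^p$ by a Neumann series, with $T_\delta^{-1}$ also bounded on $L^p$.

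To prove the error bound on $E_\delta$, I would use the regularity of the kernels. The kernel of $q_t(L) = t^2L^2\zeta^2(tL)$ inherits, from ${\bf(H2)}$, ${\bf(H3)}$ and the decay $|\zeta(z)|\le C|z|^\alpha/(1+|z|^\beta)$ with $\beta$ large, a Gaussian-type bound with both a size estimate $|q_t(x,y)|\lesssim t^{-n/2}(1+|x-y|/\sqrt t)^{-n-\gamma-\epsilon}$ and a H\"older estimate in each variable of order $\gamma$ (this should be the content of Proposition \ref{propHeat2} referenced in the excerpt — I would invoke it directly). Then on each $\delta$-dyadic cube $Q_\tau^j$ of side $\delta^{-j-M}$ one estimates the difference between $\int_{Q_\tau^j} q_j(x,y)\,g(y)\,dy$ (where $g = q_{\delta^{-2j}}(L)f$, so $g = q_j * $ -type) and $|Q_\tau^j|\,q_j(x,y_{Q_\tau^j})\,g(y_{Q_\tau^j})$; using the H\"older continuity in $y$ of $q_j(x,\cdot)$ and of $g$, each such term gains a factor $(\delta^{-j-M}/\delta^{-j})^{\gamma} = (\delta^{-M})^\gamma$ times something $\to 0$... more precisely the gain over the full kernel bound is $(\mathrm{diam}\,Q_\tau^j \cdot \delta^{j})^{\gamma} = \delta^{-M\gamma}(\ln\delta \text{ factors})$; summing in $\tau$ and $j$ and using the Gaussian decay to control the sum, one obtains a constant that tends to $0$ as $\delta\downarrow 1$. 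The vector-valued/maximal-function formulation of these bounds (to get $L^p$ rather than just $L^2$) is handled via the Fefferman–Stein vector-valued maximal inequality after dominating the relevant kernels by dilates of an integrable radially decreasing function, in the standard way.

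Once invertibility of $T_\delta$ on $L^p$ is in hand, the decomposition \eqref{wavelet expansion of Lp} is almost formal: for $f\in L^p$, write $f = T_\delta(T_\delta^{-1}f)$ and expand $T_\delta g$ with $g = T_\delta^{-1}f$ according to its definition \eqref{Tdelta}, recognizing $|Q_\tau^j|^{1/2}q_j(x,y_{Q_\tau^j}) = (\ln\delta)^{-1/2}\psi_{j,\tau}(x)$ and $|Q_\tau^j|\,q_{\delta^{-2j}}(L)g(y_{Q_\tau^j}) = |Q_\tau^j|\langle g, \overline{q_j(y_{Q_\tau^j},\cdot)}\rangle = (\ln\delta)^{-1/2}|Q_\tau^j|^{1/2}\langle g,\psi^*_{j,\tau}\rangle$, so that the product $\ln\delta \cdot (\ln\delta)^{-1} = 1$ and the sum collapses to $\sum_{j,\tau}\langle T_\delta^{-1}f,\psi^*_{j,\tau}\rangle\,\psi_{j,\tau}$, with convergence in $L^p$ inherited from the $L^p$-convergence of the defining series for $T_\delta$ (which I would establish as part of the boundedness proof, via the same square-function/maximal estimates on partial sums). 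For the norm equivalence \eqref{wavelet norm of Lp}: the frame coefficients of $g = T_\delta^{-1}f$ are $\langle g,\psi^*_{j,\tau}\rangle = (\ln\delta)^{1/2}|Q_\tau^j|^{1/2} q_{\delta^{-2j}}(L)g(y_{Q_\tau^j})$, so the square function on the right of \eqref{wavelet norm of Lp} equals $\ln\delta\,\big\|(\sum_{j,\tau} |q_{\delta^{-2j}}(L)g(y_{Q_\tau^j})|^2\chi_{Q_\tau^j})^{1/2}\big\|_p$; standard continuous-to-discrete comparison (Proposition \ref{propHeat2} regularity again, controlling $\sup_{y\in Q_\tau^j}|q_{\delta^{-2j}}(L)g(y)|$ by an average over a dilate) shows this is comparable, uniformly for $\delta$ near $1$, to the continuous $L^p$ square function $\big\|(\int_0^\infty |q_t(L)g|^2\frac{dt}{t})^{1/2}\big\|_p$, which by the bounded holomorphic functional calculus ${\bf(H1)}$ together with ${\bf(H2)}$ (Auscher–Duong–McIntosh / McIntosh square-function theory for operators with Gaussian bounds) is comparable to $\|g\|_p$; finally $\|g\|_p = \|T_\delta^{-1}f\|_p \approx \|f\|_p$ by invertibility. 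I expect the main obstacle to be the error estimate for $E_\delta$ in $L^p$, uniform in $\delta$ and with the explicit $\eta(\delta)\to 0$ decay: one must carefully track the competition between the number of cubes (growing like $\delta^{Mn}$ per unit dyadic shell), the kernel size bounds, and the H\"older gain $\delta^{-M\gamma}$, and push everything through the vector-valued maximal inequality — this is where the hypothesis $\beta > \alpha + n + \gamma + 3$ on the decay of $\zeta$ is consumed, to guarantee enough off-diagonal decay for the $\tau$-summation and the $j$-summation (almost-orthogonality across scales) to converge.
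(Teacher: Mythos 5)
Your proposal follows essentially the same route as the paper: regard $T_\delta$ as a discretization of the Calder\'on reproducing formula $f=c^{-1}\int_0^\infty q_{t^2}(L)q_{t^2}(L)f\,\frac{dt}{t}$, show that $R_\delta=I-T_\delta$ has small $L^p$ operator norm, invert by a Neumann series, and read off \eqref{wavelet expansion of Lp} from $f=T_\delta\circ T_\delta^{-1}f$; the square-function control is obtained, as in Lemma \ref{le2.1}, by dominating the discrete sums by $\int_0^\infty\big(\mathcal{M}(q_{t^2}(L)f)\big)^2\frac{dt}{t}$ and invoking the vector-valued maximal inequality together with the $H_\infty$-calculus square-function estimate. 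Two points in your sketch need repair. First, the intermediate operator should be $\mathcal{Q}=\ln\delta\sum_j q_{\delta^{-2j}}(L)q_{\delta^{-2j}}(L)$ (two factors of $q$, matching the two factors in $T_\delta$); more importantly, the spatial Riemann-sum error $T_\delta-\mathcal{Q}$ gains only the factor $(\mathrm{sidelength}/\mathrm{scale})^{\gamma}=\delta^{-M\gamma}$, which for \emph{fixed} $M$ tends to $1$, not to $0$, as $\delta\downarrow1$. The smallness of that piece comes from taking $M$ large, while only the scale-discretization error (the paper's $R_{\delta,2}$, $R_{\delta,4}$, controlled via \eqref{s-e2.3} and the $g$-function $g_{4,\delta}$) is $O(\sqrt{\delta-1})$; the paper's final bound is $C\big(\delta^{-M}+\sqrt{\delta-1}\big)$ with $\delta$ close to $1$ \emph{and} $M$ large, so your assertion that a single limit $\delta\to1^+$ produces $\eta(\delta)\to0$ is not correct as stated, though the conclusion survives once $M$ is adjusted.

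Second, for the left inequality in \eqref{wavelet norm of Lp} your appeal to a ``standard continuous-to-discrete comparison'' goes in the hard direction: you need the square function sampled at the points $y_{Q_\tau^j}$ to \emph{dominate} $\|T_\delta^{-1}f\|_p$, and that does not follow from pointwise maximal-function domination (which only gives the upper bound). The paper instead pairs $f$ against $g\in L^{p'}$, expands $f$ by \eqref{wavelet expansion of Lp}, applies Cauchy--Schwarz over $(j,\tau)$, and bounds the dual square function $\big\|\big(\sum_{j,\tau}|\langle g,\psi_{j,\tau}\rangle|^2|Q_\tau^j|^{-1}\chi_{Q_\tau^j}\big)^{1/2}\big\|_{p'}\leq C\|g\|_{p'}$ via Lemma \ref{le2.1} for $L^*$. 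You should substitute that duality argument (or an absorption argument that again exploits the smallness of the discretization errors) for the comparison you describe; with those two corrections the proposal matches the paper's proof.
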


Next we recall the definition of Hardy space associated to $L$.

\begin{definition}[\cite{DY2,HLMMY}]\label{def-Hardy space via S function}
Assume that $L$ satisfies ${\bf(H 1)}$ and ${\bf(H 2)}$.
The Hardy space $H^1_{L,  S_L}({\mathbb R}^n) $ is defined as the completion of
$\{ f\in L^2({\mathbb R}^n):\, \|S_{L} f\|_{L^1({\mathbb R}^n)}<\infty \}
$, with  norm
$
 \|f\|_{H^1_{L, S_L}({\mathbb R}^n)}:=\|S_{L} f\|_{L^1({\mathbb R}^n)},
$ where
$$S_L(f)(x)=\bigg(\int_0^{\infty}\int_{|x-y|<t}\big|q_{t^2}(L)(f)(y)\big|^2{dydt\over
t^{n+1}}\bigg)^{1\over 2}.$$

\end{definition}

The second  main result in this paper is  the following frame decomposition of the functions in $H_L^1(\mathbb{R}^n)$.

\begin{theorem}\label{theorem  wavelet expansion of H1}
Assume that $L$ satisfies ${\bf(H 1)}$, ${\bf(H 2)}$ and ${\bf(H 3)}$. For every $f\in H^1_{L,  S_L}({\mathbb R}^n) $,
 we have the following frame decomposition
\begin{eqnarray}\label{wavelet expansion of H1}
 f=\sum\limits_j\sum_{\tau\in I_j}\langle T_\delta^{-1}f,
 \psi^*_{j,\tau}\rangle  \psi_{j,\tau} \hskip 2cm  in \ \ H_L^1(\mathbb{R}^n).
\end{eqnarray}
Furthermore, there exist positive constants $C_1$ and $C_2$, such
that
\begin{eqnarray}\label{wavelet norm of H1}
\hskip1cm C_1\big\|f\big\|_{H_L^1(\mathbb{R}^n)} \leq \Big\|\Big(
\sum\limits_j \sum_{\tau\in I_j}|\langle T_\delta^{-1}f,
\psi^*_{j,\tau}\rangle|^2
\frac{1}{|Q_{\tau}^j|}\chi_{Q_{\tau}^j}\Big)^{1\over
2}\Big\|_{L^1(\mathbb{R}^n)}\leq
C_2\big\|f\big\|_{H_L^1(\mathbb{R}^n)}.
\end{eqnarray}
\end{theorem}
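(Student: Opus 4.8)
The plan is to run the argument in parallel with the proof of Theorem~\ref{main1}, replacing the $L^p$ estimates and the tent spaces $T^p$ by $H^1_L$ and the tent space $T^1$. The starting point is the identity, established on $L^2(\RN)$ in Section~3 (where $T_\delta$ is shown to be bounded and boundedly invertible),
$$T_\delta g=\sum_j\sum_{\tau\in I_j}\langle g,\psi^*_{j,\tau}\rangle\,\psi_{j,\tau},\qquad g\in L^2(\RN);$$
indeed, by \eqref{varphi}--\eqref{Tdelta} and the definitions of $\psi_{j,\tau},\psi^*_{j,\tau}$ one has $\langle g,\psi^*_{j,\tau}\rangle\,\psi_{j,\tau}(x)=\ln\delta\,|Q^j_\tau|\,q_j(x,y_{Q^j_\tau})\,q_{\delta^{-2j}}(L)g(y_{Q^j_\tau})$, which is exactly the $(j,\tau)$-term of $T_\delta g$. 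Applying this with $g=T_\delta^{-1}f$, for $f$ in the dense subspace $\{f\in L^2(\RN):\|S_Lf\|_{L^1}<\infty\}$, gives \eqref{wavelet expansion of H1} with convergence in $L^2$, so the whole statement reduces to three bounds (write $\|(s_{j,\tau})\|_{\mathbf{t}^1}:=\big\|\big(\sum_{j,\tau}|s_{j,\tau}|^2|Q^j_\tau|^{-1}\chi_{Q^j_\tau}\big)^{1/2}\big\|_{L^1(\RN)}$): \textbf{(i)} the synthesis map $R_\delta\colon(s_{j,\tau})\mapsto\sum_{j,\tau}s_{j,\tau}\psi_{j,\tau}$ is bounded from $\mathbf{t}^1$ to $H^1_{L,S_L}(\RN)$; \textbf{(ii)} the analysis map $S_\delta\colon g\mapsto(\langle g,\psi^*_{j,\tau}\rangle)_{j,\tau}$ is bounded from $H^1_{L,S_L}(\RN)$ to $\mathbf{t}^1$; \textbf{(iii)} $T_\delta^{-1}$ is bounded on $H^1_{L,S_L}(\RN)$. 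Given these, the right inequality in \eqref{wavelet norm of H1} is $\|S_\delta T_\delta^{-1}f\|_{\mathbf{t}^1}\lesssim\|T_\delta^{-1}f\|_{H^1_L}\lesssim\|f\|_{H^1_L}$ by \textbf{(ii)} and \textbf{(iii)}, and the left one is $\|f\|_{H^1_L}=\|R_\delta(S_\delta T_\delta^{-1}f)\|_{H^1_L}\lesssim\|S_\delta T_\delta^{-1}f\|_{\mathbf{t}^1}$ by \textbf{(i)}; convergence of the partial sums in $H^1_L$ follows by applying \textbf{(i)} to the tails of the $\mathbf{t}^1$-sequence $S_\delta T_\delta^{-1}f$, and all statements pass to the completion $H^1_{L,S_L}(\RN)$ by density, the $H^1_L$- and $L^2$-limits being compatible.

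For \textbf{(i)} I would use the atomic decomposition of $\mathbf{t}^1$ (the discrete analogue of the Coifman--Meyer--Stein tent-space decomposition, exactly as in the proof of Theorem~\ref{main1}): write $(s_{j,\tau})=\sum_k\lambda_k(a^{(k)}_{j,\tau})$ with $\sum_k|\lambda_k|\lesssim\|(s_{j,\tau})\|_{\mathbf{t}^1}$, each $(a^{(k)}_{j,\tau})$ supported on $\{(j,\tau):Q^j_\tau\subset B_k\}$ for a ball $B_k$ and normalized by $\sum_{j,\tau}|a^{(k)}_{j,\tau}|^2\le|B_k|^{-1}$. It then suffices to prove, uniformly in $k$, that $m_k:=\sum_{j,\tau}a^{(k)}_{j,\tau}\psi_{j,\tau}$ is a fixed multiple of an $H^1_L$-molecule adapted to $B_k$ in the sense of the molecular characterization of $H^1_L$ (cf.\ \cite{DL,HLMMY,DY2}). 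That $m_k\in L^2(\RN)$ with the required $L^2$-decay on the annuli around $B_k$ comes from the Gaussian bound $(GE)$ for the kernels $q_j$ together with a discrete quadratic estimate for the system $\{\psi_{j,\tau}\}$; the needed $L$-cancellation, namely $m_k\in\overline{\mathcal R(L^2)}$ with a bound on the associated primitive, is built into the factor $z^2$ of $q(z)=z^2\zeta^2(z)$, since $q_{\delta^{-2j}}(L)=\delta^{-4j}L^2\zeta(\delta^{-2j}L)^2$. The precise hypothesis $\beta>\alpha+n+\gamma+3$ is consumed exactly here, in the kernel estimates for $q_j$ (Proposition~\ref{propHeat2}), which themselves rest on $(GE)$ and ${\bf(H 3)}$. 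This molecular estimate is the technical core and, I expect, the main obstacle.

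For \textbf{(ii)} I would observe that $\langle g,\psi^*_{j,\tau}\rangle=\sqrt{\ln\delta}\,|Q^j_\tau|^{1/2}\,q_{\delta^{-2j}}(L)g(y_{Q^j_\tau})$, so that $\|S_\delta g\|_{\mathbf{t}^1}=\big\|\big(\ln\delta\sum_j|q_{\delta^{-2j}}(L)g(y_{Q^j_{\tau(\cdot)}})|^2\big)^{1/2}\big\|_{L^1}$, where $Q^j_{\tau(x)}$ is the cube of $I_j$ containing $x$. This is a Riemann sum of the vertical square function $\big(\int_0^\infty|q_{t^2}(L)g(\cdot)|^2\frac{dt}{t}\big)^{1/2}$ sampled at the points $y_{Q^j_\tau}$, which sit in the cone of aperture $\sqrt n\,\delta^{-M}$ over $x$. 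Inserting one further resolution factor and using the H\"older regularity ${\bf(H 3)}$ to replace the value at $y_{Q^j_\tau}$ by an $L^2$-average over $Q^j_\tau$, then summing in $j$ and invoking the change-of-aperture equivalence for tent spaces together with the fact that $\|S_Lg\|_{L^1}$ defines the $H^1_L$-norm (Definition~\ref{def-Hardy space via S function}), one gets $\|S_\delta g\|_{\mathbf{t}^1}\lesssim\|S_Lg\|_{L^1}=\|g\|_{H^1_L}$. (Alternatively \textbf{(ii)} is the formal transpose of \textbf{(i)} and could be obtained through the $H^1_L$--$BMO_{L^*}$ duality of \cite{DY2}.)

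For \textbf{(iii)} I would use the representation, available from Section~3, $T_\delta=c_0I+R^{\mathrm{err}}_\delta$ with $c_0$ a fixed positive constant (essentially $\frac12\int_0^\infty t^4\zeta(t)^4\,\frac{dt}{t}$) and $R^{\mathrm{err}}_\delta$ the difference between a continuous Calder\'on reproducing formula for $L$ and its Riemann-sum discretization in scale and in space. Since $R^{\mathrm{err}}_\delta$ is assembled from the very kernels $q_j$, it again satisfies $(GE)$- and ${\bf(H 3)}$-type bounds, uniformly in $\delta$ (close to $1$) and $M$; running the arguments of \textbf{(i)}--\textbf{(ii)} for $R^{\mathrm{err}}_\delta$ shows $\|R^{\mathrm{err}}_\delta\|_{\mathcal L(H^1_L)}\le\varepsilon(\delta,M)$, which is $<|c_0|$ once $\delta$ is close enough to $1$ and $M$ large enough. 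Hence $T_\delta^{-1}=c_0^{-1}\sum_{m\ge0}(-c_0^{-1}R^{\mathrm{err}}_\delta)^m$ converges in $\mathcal L(H^1_{L,S_L})$, which is \textbf{(iii)} (and re-derives the $L^2$-identity used above). Combining \textbf{(i)}--\textbf{(iii)} as in the first paragraph then completes the proof.
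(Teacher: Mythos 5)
Your overall architecture coincides with the paper's: write $T_\delta g=\sum_{j,\tau}\langle g,\psi^*_{j,\tau}\rangle\psi_{j,\tau}$, prove analysis and synthesis bounds between $H^1_L$ and the discrete tent-space norm $\mathbf t^1$, show the discretization error $I-T_\delta$ is a small perturbation, and invert by a Neumann series. The paper's right-hand inequality is exactly your (ii) (it is Lemma \ref{lemma H1 to L1 molecule}, the bound $\|g_{1,\delta}(L)g\|_{L^1}\lesssim\|g\|_{H^1_L}$, proved by decomposing $g$ into $L$-molecules and estimating $g_{1,\delta}$ on a single molecule), and its left-hand inequality is your (i) applied to the specific coefficient sequence, proved not through $\mathbf t^1$-atoms and molecules but through a Frazier--Jawerth type pointwise estimate (the claim \eqref{claim FJ}) followed by the Fefferman--Stein vector-valued maximal inequality with an exponent $r\in(n/(n+1),1)$. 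Your alternative executions (tent-space atomic decomposition for the synthesis map; a Riemann-sum/change-of-aperture argument or $H^1$--$BMO$ duality for the analysis map) are plausible, but note that on $L^1$ the vertical square function is \emph{not} equivalent to the conical one, so the route you sketch for (ii) needs the one-sided domination of the vertical by the conical square function for $p\le 2$, or else the molecular argument the paper actually uses.

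The genuine gap is in your step (iii). You assert that the error operator satisfies bounds ``uniformly in $\delta$ (close to $1$) and $M$'' and that its $\mathcal L(H^1_L)$-norm is $<|c_0|$ once $\delta$ is close enough to $1$ and $M$ is large enough. In fact the $H^1_L$ estimates are not uniform in $M$: any argument that controls the point evaluations $q_{\delta^{-2j}}(L)f(y_{Q^j_\tau})$ on the grid of cubes of side $\delta^{-j-M}$ by a maximal function at the scale $\delta^{-(j\wedge k)}$ (unavoidable, since $y_{Q^j_\tau}$ is an \emph{arbitrary} point of $Q^j_\tau$) loses a factor $\delta^{Mn({1\over r}-1)}$, as in \eqref{claim FJ}. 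The paper's Theorem \ref{theorem error term of H1} accordingly ends with a bound of the form $C\big(\delta^{Mn({1\over r}-1)}(\delta-1)+\delta^{-M(1-n({1\over r}-1))}\big)$: the first term blows up if you fix $\delta$ and let $M\to\infty$, while the second does not become small if you fix $M$ and let $\delta\to1$. Closing the Neumann series therefore requires (a) choosing $r>n/(n+1)$ so that the exponent $1-n({1\over r}-1)$ is positive, and (b) a coupled choice of $\delta$ and $M$ (the paper first constrains $\delta-1$ in terms of the constants and only then solves for $M$). As written, your plan does not detect this competition, and the naive ``$\delta$ close to $1$ and $M$ large'' does not by itself make the error contractive on $H^1_L$.
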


\begin{remark} Operators which satisfy the assumptions ${\bf(H 1)}$, ${\bf(H 2)}$ and ${\bf(H 3)}$ include the following:

(i) Laplace operator on the Euclidean spaces $\mathbb{R}^n$.

(ii) Second order elliptic divergence form operators with bounded, real coefficients
 on $\mathbb{R}^n$. See \cite{AT}.

(iii) The Schr\"odinger operators $-\Delta + V$ on $\mathbb{R}^n$ where the potentials $V$ belong to a suitable H\"older class. See, for example, \cite{DZ}.
\end{remark}

\begin{remark}
Note that in   Theorems \ref{main1} and \ref{theorem  wavelet expansion of H1} above:

(i)  The operators $q_{\delta^{-2j}}(L)$ can be replaced by
$  \big(\delta^{-2j}L\big)^ke^{-\delta^{-2j}L} $ for any $k\in \mathbb{N}$, which can be observed from the strategy of our proofs.
Hence the function $\psi_{j,\tau}(x)=\sqrt{\ln\delta}|Q_{\tau}^j|^{1/2}\big(\delta^{-2j}L\big)^ke^{-\delta^{-2j}L}(x,y_{Q_{\tau}^j})$, where $\big(\delta^{-2j}L\big)^ke^{-\delta^{-2j}L}(x,y)$ is the kernel of the operator $\big(\delta^{-2j}L\big)^ke^{-\delta^{-2j}L} $.

(ii) If $L$ is non-negative self-adjoint, then $\psi^*_{j,\tau}$ equals $\psi_{j,\tau}$.
\end{remark}

\smallskip
Our strategy of proof is the following:

\smallskip
Step 1. We first develop the following two key technical results:

(i) Using the holomorphic functional calculi of operators, we obtain the almost orthogonality
 estimates for the operators $\{q(tL)\}_{t>0}$, where  the function $q(\lambda)$ is defined as in \eqref{varphi}. See more details in  Proposition \ref{s-prop3.3}.

(ii) We introduce four different versions of the discrete Littlewood--Paley $g$-functions associated to the operator $L$,
and by applying the almost orthogonality estimates for the operators $\{q_t(L)\}_{t>0}$ above, we prove that these
 $g$-functions are bounded on $L^p(\mathbb{R}^n)$, $1<p<\infty$, and on $H_L^1(\mathbb{R}^n)$. See Lemmas \ref{le2.1} and \ref{lemma H1 to L1 molecule}.


\smallskip
Step 2. Let the operator $R_\delta= I-T_\delta$, where $I$ is the identity operator and $T_\delta$ is defined as in \eqref{Tdelta}. Using the two key technical results above, we can
obtain the  operator norms of $R_\delta$ on $L^p(\mathbb{R}^n)$, $1<p<\infty$, and on $H_L^1(\mathbb{R}^n)$. By choosing $\delta>1$ and close to 1,  we get that the operator norms of $R_{\delta}$ are strictly less than 1. This shows that $T_\delta$ is invertible and the inverse operator $T_\delta^{-1}$ is bounded on $L^p(\mathbb{R}^n)$, $1<p<\infty$, and on $H_L^1(\mathbb{R}^n)$. See more details in Theorems \ref{th2.2} and \ref{theorem error term of H1}.

\smallskip

In Section 5,  as an application of the frame decomposition, we apply our main result,
Theorem \ref{theorem  wavelet expansion of H1} and to obtain the radial maximal function characterization of the Hardy space $H_L^1(\mathbb{R}^n)$ under the extra assumption of Gaussian upper bounds on the
gradient of the heat kernels of $L$.

The paper is organised as follows. In Section 2 we prove the almost orthogonality estimates for the operators $\{q_t(L)\}_{t>0}$, where  the function $q_t(z)$ is defined as in \eqref{varphi}. In Section 3 we prove Theorem \ref{main1} by showing that $T_ \delta^{-1}$ exists and is bounded on $L^p(\mathbb{R}^n)$. In Section 4 we prove Theorem \ref{theorem  wavelet expansion of H1} by showing that $T_ \delta^{-1}$ exists and is bounded on $H_L^1(\mathbb{R}^n)$.
The last section is devoted to the proof of the radial maximal function characterization of the Hardy space $H_L^1(\mathbb{R}^n)$.

\vskip.2cm
\section{Notation and preliminaries}
\setcounter{equation}{0}

We first recall some preliminaries on holomorphic functional calculi of operators. See \cite{Mc}.

Let $0\leq \omega<\nu<\pi$. We define the closed sector in the
complex plane ${\mathbb C}$
$$ S_{\omega}=\{z\in {\mathbb C}: |{\rm
arg}z|\leq\omega\}\cup\{0\}
$$
and denote the interior of $S_{\omega}$ by $S_{\omega}^0$. We define
the following subspaces of the space $H(S_{\nu}^0)$ of all
holomorphic functions on $S_{\nu}^0$:
$$
H_{\infty}(S^0_{\nu}) =\{b\in H(S_{\nu}^0):\
||b||_{{\infty}}<\infty\},
$$
\noindent where $||b||_{\infty}={\rm sup}\{|b(z)|: z\in
S^0_{\nu}\}$,  and
$$ \Psi(S^0_{\nu})=\big\{\psi\in H(S^0_{\nu}):
\exists\  \!  s>0,  \ |\psi(z)|\leq c {|z|^s\over {1+|z|^{2s}} }\big\}.
$$
 \noindent Let $0\leq\omega<\pi$. A closed operator $L$ in
$L^2(\RN)$ is said to be of type $\omega$ if  $\sigma(L)\subset
S_{\omega}$, and for each $\nu>\omega,$ there exists a constant
$c_{\nu}$ such that $ \|(L-\lambda {\mathcal I})^{-1}\| \leq
c_{\nu}|\lambda|^{-1},\  \lambda\not\in S_{\nu}. $ \noindent If $L$
is of type $\omega$ and $\psi\in \Psi(S^0_{\nu})$, we define
$\psi(L)\in {\mathcal L}(L^2, L^2)$ by
\begin{equation}\label{functional calculus}
\psi(L)=\frac{1}{2\pi i}\int_{\Gamma}(L-\lambda {\mathcal
I})^{-1}\psi(\lambda)d\lambda,
\end{equation}
\noindent
where $\Gamma$ is the contour $\{\xi=re^{\pm i\theta}: r\geq 0\}$
parametrized clockwise around $S_{\omega}$, and $\omega<\theta<\nu$.
Clearly, this integral is absolutely
convergent in ${\mathcal L}(L^2, L^2)$, and it is straightforward to show that,
using Cauchy's theorem, the definition is independent of the choice of
$\theta\in (\omega, \nu).$  If, in addition, $L$ is one-one and has dense range
and if
$b\in H_{\infty}(S^0_{\nu})$, then $b(L)$ can be defined by
\begin{eqnarray*}
b(L)=[\psi(L)]^{-1}(b\psi)(L),
\end{eqnarray*}
\noindent
where $\psi(z) =z(1+z)^{-2}$. It can be shown that $b(L)$ is a
well-defined linear operator in $L^2(\mathbb{R}^n)$.

We point out that if a closed operator $L$ in
$L^2(\RN)$ is of type $\omega$, then $L^*$ is also of type $\omega$, see \cite[Page 20]{ADM}.

We say that
$L$ has a bounded $H_{\infty}$ calculus in $L^2 $ if there exists
$c_{\nu,2}>0$ such that  $b(L)\in {\mathcal L}(L^2, L^2)$,  and for
$b\in H_{\infty}(S^0_{\nu})$,
\begin{eqnarray*}
||b(L)|| \leq c_{\nu,2}||b||_{{\infty}}.
\end{eqnarray*}
In \cite{Mc} it was proved that $L$ has a bounded
$H_{\infty}$-calculus in  $L^2(\mathbb{R}^n)$ if and only if  for any
non-zero function $\psi\in \Psi(S^0_{\nu})$, $L$  satisfies the
square function estimate    and its reverse
\begin{eqnarray}\label{L2 esti of square function}
c_1\|g\|_2\leq \Big( \int_0^{\infty}\|\psi_t(L)g\|_2^2 {dt\over t
}\Big)^{1/2}\leq c_2\|g\|_2
\end{eqnarray}
\noindent
for some $0<c_1\leq c_2<\infty$,
where $\psi_t(\xi)=\psi(t\xi)$. Note that different choices of
$\nu>\omega$ and $\psi\in \Psi(S^0_{\nu})$ lead to equivalent
quadratic norms of $g$.

Note that by Corollary E in \cite[Page 22]{ADM}, if $L$ satisfies \eqref{L2 esti of square function},
then $L^*$ also satisfies \eqref{L2 esti of square function}.

As noted in \cite{Mc},  non-negative self-adjoint operators satisfy the
quadratic estimate (\ref{L2 esti of square function}). So do normal
operators with spectra in a sector, and maximal accretive operators.
For further study of holomorphic functional calculi on Banach spaces, see \cite{Mc} and \cite{CDMY}.

\medskip

\begin{prop}\label{prop derivative}
 Suppose $\psi\in H(S^0_{\nu})$ with
two parameters $\alpha>0, \beta>\alpha$ such that
\begin{eqnarray}\label{q1}
|\psi(z)|\leq C{|z|^\alpha\over
1+|z|^{\beta}}.
\end{eqnarray}
 Then for each fixed $k\in\mathbb{N}$, we have $ \psi^{(k)}   $ is holomorphic in $ S^0_{\omega}$   for some $\omega$
with $\omega<\nu$, and
\begin{eqnarray*}
| \psi^{(k)}(z) |
\leq   C {1\over |z|^k}{|z|^\alpha\over {1+|z|^{\beta}}}.
\end{eqnarray*}
\end{prop}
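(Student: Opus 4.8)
The plan is to represent $\psi^{(k)}$ via the Cauchy integral formula on a suitably chosen circle and estimate the resulting integral using the decay hypothesis \eqref{q1}. First I would fix $z\in S^0_\omega$ for some $\omega$ with $\omega<\nu$, and choose a radius $r=r(z)$ comparable to $|z|$, say $r=\epsilon|z|$ with $\epsilon>0$ small enough (depending only on $\nu-\omega$) so that the disc $\{w:|w-z|=r\}$ stays inside the larger sector $S^0_\nu$ where $\psi$ is holomorphic. Then Cauchy's formula gives
\begin{eqnarray*}
\psi^{(k)}(z)=\frac{k!}{2\pi i}\int_{|w-z|=r}\frac{\psi(w)}{(w-z)^{k+1}}\,dw,
\end{eqnarray*}
so that
\begin{eqnarray*}
|\psi^{(k)}(z)|\leq \frac{k!}{2\pi}\cdot\frac{2\pi r}{r^{k+1}}\sup_{|w-z|=r}|\psi(w)|=\frac{k!}{r^{k}}\sup_{|w-z|=r}|\psi(w)|.
\end{eqnarray*}

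Next I would bound $\sup_{|w-z|=r}|\psi(w)|$ using \eqref{q1}. Since $r=\epsilon|z|$ with $\epsilon<1$, every $w$ on the circle satisfies $|w|\sim|z|$ (precisely $(1-\epsilon)|z|\leq|w|\leq(1+\epsilon)|z|$), hence
\begin{eqnarray*}
|\psi(w)|\leq C\frac{|w|^\alpha}{1+|w|^\beta}\leq C'\frac{|z|^\alpha}{1+|z|^\beta},
\end{eqnarray*}
where $C'$ depends on $C,\alpha,\beta,\epsilon$ but not on $z$; the monotonicity/comparability here is routine since the function $s\mapsto s^\alpha/(1+s^\beta)$ is, up to constants, monotone on each of the regimes $s\lesssim 1$ and $s\gtrsim 1$, and the two regimes for $|w|$ and $|z|$ coincide. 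Combining with the Cauchy estimate and $r^{-k}=\epsilon^{-k}|z|^{-k}$ yields
\begin{eqnarray*}
|\psi^{(k)}(z)|\leq \frac{k!\,\epsilon^{-k}}{|z|^k}\cdot C'\frac{|z|^\alpha}{1+|z|^\beta}=C_k\frac{1}{|z|^k}\cdot\frac{|z|^\alpha}{1+|z|^\beta},
\end{eqnarray*}
which is the desired bound, and holomorphy of $\psi^{(k)}$ on $S^0_\omega$ is automatic since $\psi$ is holomorphic on the strictly larger $S^0_\nu$.

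The only point requiring a little care—and what I expect to be the main (though still minor) obstacle—is the geometric claim that for a fixed opening loss $\nu-\omega>0$ one can pick $\epsilon>0$ so that the disc $\overline{D(z,\epsilon|z|)}$ lies inside $S^0_\nu$ uniformly for all $z\in S^0_\omega$: this follows because the Euclidean distance from $z$ to the boundary of $S^0_\nu$ is bounded below by $|z|\sin(\nu-\omega)$, so any $\epsilon<\sin(\nu-\omega)$ works. Everything else is a standard Cauchy-estimate argument, and one simply records that the constant $C_k$ depends on $k$, $\nu$, $\omega$, $\alpha$, $\beta$ and the constant $C$ in \eqref{q1}, but not on $z$.
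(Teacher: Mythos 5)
Your proposal is correct and follows essentially the same route as the paper: a Cauchy integral estimate on a circle of radius comparable to $|z|$ contained in $S^0_\nu$, together with the comparability of $|\psi(w)|$ and $C\,|z|^\alpha/(1+|z|^\beta)$ on that circle. The only cosmetic difference is that you apply the $k$-th derivative Cauchy formula directly, whereas the paper treats $k=1$ and then inducts, shrinking the sector slightly at each step.
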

\begin{proof}
For each $\psi\in \Psi(S^0_{\nu})$, we have $|\psi(z)|\leq C {|z|^\alpha\over {1+|z|^{\beta}}}$. Now fix $\epsilon>0$ such that $\sin (k\epsilon) <{1\over 100}$, we consider the sector
$S^0_{\nu-\epsilon}$. For every $z\in S^0_{\nu-\epsilon}$ and $z\not=0$, we consider the ball  $B(z,r)$, centered at $z$, with radius $r=|z|\sin\epsilon$,
such that $B(z,r)$ is contained in $S^0_{\nu}$. Then by Cauchy's formula we obtain that
\begin{eqnarray*}
 \psi^{(1)}(z) = {1\over 2\pi i} \oint_{\partial B} {\psi(\lambda)\over (\lambda-z)^2} d\lambda,
\end{eqnarray*}
where $\partial B$ is the boundary of the ball $B(z,r)$.

Then we have
\begin{eqnarray*}
| \psi^{(1)}(z) | &\leq&  {1\over 2\pi } \oint_{\partial B} {|\psi(\lambda)|\over |(\lambda-z)^2|} |d\lambda|\\
&\leq&   {1\over 2\pi r^2 } \oint_{\partial B} |\psi(\lambda)| |d\lambda|.
\end{eqnarray*}
Next, note that $|\lambda|\leq |\lambda-z|+|z| \leq {101\over 100} |z| $, and $|\lambda|\geq |z|-|\lambda-z| \geq {99\over 100} |z|$
\begin{eqnarray*}
|\psi(\lambda)|\leq C {|\lambda|^\alpha\over {1+|\lambda|^{\beta}}} \leq C \Big({101\over 100}\Big)^\alpha\Big({100\over 99}\Big)^{\beta}  {|z|^\alpha\over {1+|z|^{\beta}}}.
\end{eqnarray*}
Thus,
\begin{eqnarray*}
| \psi^{(1)}(z) |
&\leq&   {C'\over 2\pi r } {|z|^\alpha\over {1+|z|^{\beta}}} \leq C  {1\over |z|}{|z|^\alpha\over {1+|z|^{\beta}}}.
\end{eqnarray*}
As a consequence, we get that $\phi^{(1)}(z)$ is in $\Psi(S^0_{\nu-\epsilon})$

By induction, we can obtain that
\begin{eqnarray*}
| \psi^{(k)}(z) |
&\leq&   C {1\over |z|^k}{|z|^\alpha\over {1+|z|^{\beta}}}.
\end{eqnarray*}
The proof of Proposition \ref{prop derivative} is complete.
\end{proof}

\begin{prop}\label{propHeat}
 Suppose that $L$ satisfies $({\bf H1})$ and $({\bf H2})$.
  Suppose $\psi\in H(S^0_{\nu})$ with
two parameters $\alpha>0, \beta>n/2 +\alpha$ such that
\begin{eqnarray}\label{q1}
|\psi(z)|\leq C{|z|^\alpha\over
1+|z|^{\beta}}.
\end{eqnarray}
Then  there exists a positive constant $C=C({n,\nu, \alpha, \beta})$
such that the kernel $K_{\psi(t {L}) }(x,y)$ of $\psi(t {L}) $
satisfies
\begin{eqnarray}\label{q2}
\big|K_{\psi(t {L})}(x,y)\big|
\leq  C\, t^{-n/2}\left(1+{|x-y|^2\over t}\right)^{-({n\over 2}+\alpha)}
\end{eqnarray}
for all $t>0$ and $x, y\in \RN$.
\end{prop}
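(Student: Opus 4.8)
The plan is to realise $\psi(tL)$ as a superposition of the holomorphic semigroup and then feed in the Gaussian bound $(GE)$. Since $|\psi(z)|\le C|z|^{\alpha}/(1+|z|^{\beta})$ with $\alpha>0$ and $\beta>\alpha$, in particular $\psi\in\Psi(S^{0}_{\nu})$ (take $s=\min(\alpha,\beta-\alpha)$), so $\psi(tL)=\frac{1}{2\pi i}\int_{\Gamma}(\lambda I-tL)^{-1}\psi(\lambda)\,d\lambda$ is an absolutely convergent $L^{2}$-integral. First I would deform $\Gamma$ and rewrite the resolvent by the Laplace transform of the semigroup along a ray, arriving at
\begin{equation*}
\psi(tL)=\frac{1}{2\pi i}\int_{\gamma_{+}\cup\gamma_{-}}e^{-ztL}\,\eta(z)\,dz,\qquad
\eta(z)=\frac{1}{2\pi i}\int_{\Gamma'}e^{z\lambda}\psi(\lambda)\,d\lambda,
\end{equation*}
where $\gamma_{\pm}=\{re^{\pm i\phi}:r\ge0\}$ with $0<\phi<\pi/2-\theta$ for some $\theta>\omega$ (so that $(GE)$ is available on $\gamma_{\pm}$) and $\Gamma'$ is a suitable contour inside $S^{0}_{\nu}$ along which $e^{z\lambda}$ decays for $z\in\gamma_{\pm}$. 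This is the standard semigroup representation of functional-calculus operators; checking the contour deformation and the interchange of integrals is routine, and it also shows that $\psi(tL)$ is an integral operator with kernel $K_{\psi(tL)}(x,y)=\frac{1}{2\pi i}\int_{\gamma_{\pm}}p_{zt}(x,y)\,\eta(z)\,dz$.

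Next I would estimate the weight $\eta$. For $|z|\ge1$ one has directly $|\eta(z)|\lesssim\int_{0}^{\infty}e^{-c|z|r}\,\frac{r^{\alpha}}{1+r^{\beta}}\,dr\lesssim|z|^{-\alpha-1}$, since the integral concentrates where $r\sim|z|^{-1}\le1$ and there $r^{\alpha}/(1+r^{\beta})\approx r^{\alpha}$. For $|z|\le1$ I would use the moment cancellations $\int_{\Gamma'}\lambda^{k}\psi(\lambda)\,d\lambda=0$, valid for every integer $k<\beta-\alpha-1$ by Cauchy's theorem (the decay of $\psi$ at infinity, together with the corresponding control on its derivatives from Proposition \ref{prop derivative}, lets one cap the contour): subtracting the degree-$K$ Taylor polynomial $P_{K}(z\lambda)$ of $e^{z\lambda}$ and splitting $\Gamma'$ at $|\lambda|=|z|^{-1}$ — using $|e^{z\lambda}-P_{K}(z\lambda)|\lesssim|z\lambda|^{K+1}$ on the inner part and $|e^{z\lambda}|\le e^{-c|z||\lambda|}$ on the outer part — gives $|\eta(z)|\lesssim|z|^{\gamma_{0}}$ for $|z|\le1$, where $\gamma_{0}$ is any fixed number strictly below $\beta-\alpha-1$. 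The role of the hypothesis $\beta>n/2+\alpha$ is exactly that $\beta-\alpha-1>n/2-1$, so such a $\gamma_{0}$ may be chosen with $\gamma_{0}>n/2-1$.

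Finally I would insert $(GE)$, which on $\gamma_{\pm}$ reads $|p_{zt}(x,y)|\lesssim(|z|t)^{-n/2}\exp(-c|x-y|^{2}/(|z|t))$, parametrize $\gamma_{\pm}$ by $|z|=r$, and obtain
\begin{equation*}
\big|K_{\psi(tL)}(x,y)\big|\lesssim\int_{0}^{\infty}(rt)^{-n/2}\exp\!\Big(-\frac{c|x-y|^{2}}{rt}\Big)\,\min\!\big(r^{\gamma_{0}},\,r^{-\alpha-1}\big)\,dr .
\end{equation*}
Splitting at $r=1$: on $0<r\le1$ I would bound $\exp(-c|x-y|^{2}/(rt))\le e^{-c|x-y|^{2}/(2t)}\exp(-c|x-y|^{2}/(2rt))$ and use $\int_{0}^{1}r^{\gamma_{0}-n/2}\,dr<\infty$ (finite precisely because $\gamma_{0}-n/2>-1$) to get a contribution $\lesssim t^{-n/2}e^{-c|x-y|^{2}/(2t)}\lesssim t^{-n/2}(1+|x-y|^{2}/t)^{-(n/2+\alpha)}$; on $r\ge1$ the substitution $u=|x-y|^{2}/(rt)$ turns the integral into $t^{-n/2}(|x-y|^{2}/t)^{-(n/2+\alpha)}\int_{0}^{|x-y|^{2}/t}u^{n/2+\alpha-1}e^{-cu}\,du$ when $|x-y|^{2}>t$ (and it is $\lesssim t^{-n/2}$ when $|x-y|^{2}\le t$), the $u$-integral being bounded since $n/2+\alpha>0$. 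Adding the two pieces gives \eqref{q2}, uniformly in $t>0$ and $x,y\in\RN$. The main obstacle is the first two steps — producing the semigroup representation and, above all, showing that $\eta$ vanishes at the origin fast enough; this transfer of decay from $\psi$ at infinity to $\eta$ at $0$ is exactly where $\beta>n/2+\alpha$ is consumed, after which the last step is a routine scalar estimate.
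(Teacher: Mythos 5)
Your argument is correct, but on the on-diagonal half of the estimate it takes a genuinely different route from the paper. The paper proves \eqref{q2} by combining two separate bounds: the pure size bound $|K_{\psi(tL)}(x,y)|\le Ct^{-n/2}$, obtained by factoring $\psi(tL)=(I+tL)^{-m}\bigl[(I+tL)^{2m}\psi(tL)\bigr](I+tL)^{-m}$ and composing the $L^1\to L^2$ and $L^2\to L^\infty$ bounds for $(I+tL)^{-m}$ (which come from the Gaussian bound through the Laplace-transform identity \eqref{q5}) with the $L^2\to L^2$ bound for the middle factor supplied by the $H_\infty$-calculus hypothesis $({\bf H1})$; and the off-diagonal bound \eqref{q4}, obtained from the semigroup representation $\psi(tL)=\int_\gamma e^{-zL}n(z)\,dz$ using only the crude estimate $|n(z)|\lesssim t^\alpha |z|^{-\alpha-1}$, whose resulting integral converges near the origin only when $x\ne y$. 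You instead push the entire proposition through the semigroup representation, and the extra work you do --- the vanishing moments $\int_{\Gamma'}\lambda^k\psi(\lambda)\,d\lambda=0$ for integers $k<\beta-\alpha-1$ and the Taylor-remainder splitting of $e^{z\lambda}$ --- is precisely what upgrades the weight near the origin to $|\eta(z)|\lesssim|z|^{\gamma_0}$ with $\gamma_0>n/2-1$, so that one scalar integral in $r$ delivers both the on-diagonal and the off-diagonal behaviour at once. What your route buys: it never invokes $({\bf H1})$, only the type-$\omega$ property (to define $\psi(tL)$ for $\psi\in\Psi(S^0_\nu)$) and the Gaussian bound $({\bf H2})$, and it shows exactly where $\beta>n/2+\alpha$ is consumed, namely in the convergence of $\int_0^1 r^{\gamma_0-n/2}\,dr$. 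What the paper's route buys is that each half is short: the on-diagonal bound is immediate from $({\bf H1})$ and \eqref{q5} with no contour-cancellation argument, and the off-diagonal bound needs no control of $n(z)$ at $z=0$. Two cosmetic points if you write this up: the cancellation $\int_{\Gamma'}\lambda^k\psi(\lambda)\,d\lambda=0$ needs only Cauchy's theorem and the decay of $\psi$ itself (the derivative bounds of Proposition \ref{prop derivative} are not needed to cap the contour); and in the degenerate range $\beta-\alpha\le 1$ (possible only for $n=1$) there are no moments to subtract, but then one may take $\gamma_0$ negative in $(n/2-1,\,\beta-\alpha-1)$ and the unsubtracted inner integral already gives $|\eta(z)|\lesssim|z|^{\beta-\alpha-1}$, so the conclusion is unaffected.
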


\noindent
{\it Proof.} To prove \eqref{q2}, it suffices  to show   the following estimates:
\begin{eqnarray}\label{q3}
\big|K_{\psi(t {L})}(x,y)\big|
\leq  C\, t^{-n/2}
\end{eqnarray}
and
\begin{eqnarray}\label{q4}
\big|K_{\psi(t {L})}(x,y)\big|
\leq  C\, t^{-n/2}\left({t\over |x-y|^2}\right)^{ {n\over 2}+\alpha}.
\end{eqnarray}

Let us verify \eqref{q3}.  Note that
  for any $m\in {\Bbb N}$ and $t>0$, we have   the relationship
\begin{eqnarray}\label{q5}
  (I+tL)^{-m}={1\over  (m-1)!} \int\limits_{0}^{\infty}e^{-tsL}e^{-s} s^{m-1} ds
\end{eqnarray}
  and so when $m>n/4$,
 \begin{eqnarray*}
 \big\| (I+tL)^{-m} \big\|_{L^2(\RN)\rightarrow L^{\infty}(\RN)}\leq {1\over  (m-1)!} \int\limits_{0}^{\infty}
 \big\| e^{-tsL}\big\|_{L^2(\RN)\rightarrow L^{\infty}(\RN)} e^{-s} s^{m-1} ds\leq C t^{-n/4}
 \end{eqnarray*}
for all $t>0.$ Similarly, we have that  $ \big\| (I+tL)^{-m} \big\|_{L^1(\RN)\rightarrow L^{2}(\RN)}
= \big\| (I+tL^{\ast})^{-m} \big\|_{L^2(\RN)\rightarrow L^{\infty}(\RN)}
 \leq
C t^{-n/4}$   where $L^{\ast}$ is the adjoint operator of
$L$. Next we  choose  a constant $m$ in \eqref{q5} such that $n/4<m< (\beta-\alpha)/2$. One can write
\begin{align*}
\psi(tL)&= (I+tL)^{-2m} \big[ (I+tL)^{2m}  \psi(tL)\big] \\
&=(I+tL)^{-m} \big[ (I+tL)^{2m}  \psi(tL)\big] (I+tL)^{-m},
\end{align*} and then
\begin{eqnarray*}
 \big\|\psi(tL)\big\|_{L^1(\RN)\rightarrow L^{\infty}(\RN)}
 &\leq& \big\|(I+tL)^{-m} \big\|_{L^2(\RN)\rightarrow L^{\infty}(\RN)}\\
 & &\times \big\|(I+tL)^{2m}  \psi(tL)\big\|_{L^2(\RN)\rightarrow L^{2}(\RN)}
\big\|(I+tL)^{-m} \big\|_{L^1(\RN)\rightarrow L^{2}(\RN)}.
\end{eqnarray*}
We see that  $(1+z)^{2m}  \psi(z)\in H_{\infty}(S^0_{\nu})$  with
$
|(1+z)^{2m}  \psi(z)|\leq C{|z|^{\alpha}/(1+ |z|^{\beta-2m})}\leq C<+\infty.
$
From condition $({\bf H1})$,   $L$ has   a bounded
$H_{\infty}$-calculus on  $L^2({\mathbb R}^{n})$. This implies that
the $L^2$ operator norm of the   term $(I+tL)^{2m}  \psi(tL)$  is uniformly
bounded in $t>0$. Hence, estimate \eqref{q3} holds.

To prove  \eqref{q4}, we first  represent the operator
$\psi(tL)$ by
using the semigroup $e^{-zL}$.  As in \cite{DM}, $\psi(tL)$
(acting on $L^2({\mathbb R}^n)$) is given by
$$
\psi(tL)={1\over {2\pi i}}\int_{\Gamma}(L-\lambda
I)^{-1}\psi(t\lambda)\,d\lambda,
$$
where the contour $\Gamma=\Gamma_+\cup\Gamma_-$ is given by
$\Gamma_+(t)=te^{i\nu}$ for $t\geq 0$
and $\Gamma_-(t)=-te^{-i\nu}$ for $t<0$, and $\mu>\nu>\omega$.
For $\lambda\in \Gamma,$ substitute
$$
(L-\lambda I)^{-1}=\int_{\gamma}e^{\lambda z}e^{-zL}dz,
$$
where $\gamma$ is the ray $\{re^{i\theta}: \ 0<r<\infty\}$  with $\theta$ chosen to satisfy
 $|{\rm arg}(\lambda z)|>\pi/2$.

Changing the order of integration gives
$$
\psi(tL)=\int_{\gamma} e^{-zL}n(z)\,dz,
$$
where
$$
n(z)={1\over {2\pi i}}\int_{\Gamma}e^{\lambda z}\psi(t\lambda)\,d\lambda.
$$
Consequently, the kernel $K_{\psi(tL)}(x,y)$ of $\psi(tL)$ is given by
$$
K_{\psi(tL)}(x,y)=\int_{\gamma}p_z(x,y)n(z)\,dz.
$$
It follows from $({\bf H2})$ that
\begin{eqnarray}
|K_{\psi(tL)}(x,y)|&\leq& C\int_0^{\infty} {|z|}^{-{n\over 2}}
e^{-c{{{|x-y|^2}}\over |z|}} \left(
\int_0^{\infty}
|e^{z\lambda}\psi(t\lambda)|d|\lambda|\right)d|z|\nonumber\\
&\leq&  C\int_0^{\infty} s^{-{n\over 2}}
e^{-c{{{|x-y|^2}}\over s}} \left(
\int_0^{\infty}
 e^{-\eta sw} {(tw)^{\alpha} \over 1+ (tw)^{\beta} } dw \right) ds\nonumber
\end{eqnarray}
with $\eta>0$. Changing variables $tw\rightarrow w$ and
$s/t\rightarrow s$, we have
\begin{eqnarray*}
|K_{\psi(tL)}(x,y)|
&\leq&  Ct^{-{n\over 2}} \int_0^{\infty} s^{-{n\over 2}}
e^{-c{{{|x-y|^2}}\over st}} \left(
\int_0^{\infty}
 e^{-\eta sw} {w^{\alpha} \over 1+w^{\beta} } dw \right) ds\nonumber\\
&\leq&  Ct^{-{n\over 2}} \int_0^{\infty} s^{-({n\over 2}+\alpha +1)}
e^{-c{{{|x-y|^2}}\over st}} \left(
\int_0^{\infty}
 e^{-\eta w}  w^{\alpha}  dw \right) ds\nonumber\\
 &\leq&  Ct^{-{n\over 2}} \left({t\over |x-y|^2}\right)^{{n\over 2}+\alpha}\int_0^{\infty} s^{-({n\over 2}+\alpha +1)}
e^{-{c\over s}}  ds\nonumber\\
&\leq&  Ct^{-{n\over 2}} \left({t\over |x-y|^2}\right)^{ {n\over 2}+\alpha}.
\end{eqnarray*}
Estimate  \eqref{q4} follows readily. The proof of  Proposition~\ref{propHeat} is complete.

\medskip

\begin{prop}\label{propHeat2}
 Suppose that $L$ satisfies $({\bf H1})$,  $({\bf H2})$ and $({\bf H3})$ with some $\gamma>0$.
  Suppose $\psi\in H(S^0_{\nu})$ with
two parameters $\alpha>0, \beta>n +\alpha +{\gamma \over 2}$ such that
\begin{eqnarray}\label{q7}
|\psi(z)|\leq C{|z|^\alpha\over
1+|z|^{\beta}}.
\end{eqnarray}
Then  there exists a positive constant $C=C({n,\nu, \alpha, \beta})$
such that the kernel $K_{\psi(t {L}) }(x,y)$ of $\psi(t {L}) $
satisfies
\begin{eqnarray}\label{q8}
 \big|K_{\psi(t {L})}(x+h,y)-K_{\psi(t {L})}(x,y)\big| &+&
\big|K_{\psi(t {L})}(x, y+h)-K_{\psi(t {L})}(x,y)\big|\nonumber\\[2pt]
&\leq&  C\, \Big({|h|\over \sqrt{t}+{|x-y|}}  \Big)^{\gamma} {t^{\alpha}\over
\left(t+{|x-y|^2}\right)^{{n\over 2}+\alpha}}
\end{eqnarray}
whenever  $2|h|\leq t^{1/2}+|x-y|$, and
for all $t>0$ and $x, y\in \RN$.
\end{prop}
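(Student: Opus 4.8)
The plan is to follow the two-step scheme used in the proof of Proposition~\ref{propHeat}. First I would establish a ``near-diagonal'' difference bound
$$\big|K_{\psi(tL)}(x+h,y)-K_{\psi(tL)}(x,y)\big|+\big|K_{\psi(tL)}(x,y+h)-K_{\psi(tL)}(x,y)\big|\le C\,|h|^\gamma\,t^{-(n+\gamma)/2},$$
valid for all $x,y$ and all $|h|\le 2\sqrt t$, and a ``far'' difference bound
$$\big|K_{\psi(tL)}(x+h,y)-K_{\psi(tL)}(x,y)\big|+\big|K_{\psi(tL)}(x,y+h)-K_{\psi(tL)}(x,y)\big|\le C\,\frac{|h|^\gamma\,t^{\alpha}}{|x-y|^{n+2\alpha+\gamma}},$$
valid whenever $2|h|\le|x-y|$. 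These two bounds, together with the pointwise estimate \eqref{q2} of Proposition~\ref{propHeat} (applied via the triangle inequality) in the leftover range $|x-y|<2|h|\le\sqrt t+|x-y|$, yield \eqref{q8}: a short case split according to whether $|x-y|\le 2\sqrt t$ (use the near bound), $|x-y|>2\sqrt t$ and $2|h|\le|x-y|$ (use the far bound), or $|x-y|>2\sqrt t$ and $|x-y|<2|h|\le\sqrt t+|x-y|$ (use \eqref{q2}, noting that then $|h|\simeq|x-y|\simeq\sqrt t+|x-y|$ and $|x+h-y|\gtrsim|x-y|$) shows in each case that the right-hand side matches $C\big(|h|/(\sqrt t+|x-y|)\big)^\gamma t^{\alpha}(t+|x-y|^2)^{-n/2-\alpha}$.

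For the far bound I would reuse the contour representation from the proof of Proposition~\ref{propHeat}, $K_{\psi(tL)}(x,y)=\int_\gamma p_z(x,y)\,n(z)\,dz$ with $n(z)=\tfrac{1}{2\pi i}\int_\Gamma e^{\lambda z}\psi(t\lambda)\,d\lambda$, and differentiate under the integral sign to write $K_{\psi(tL)}(x+h,y)-K_{\psi(tL)}(x,y)=\int_\gamma\big(p_z(x+h,y)-p_z(x,y)\big)n(z)\,dz$. Because $2|h|\le|x-y|\le|z|^{1/2}+|x-y|$ for every $z$ on $\gamma$, the hypothesis $({\bf H3})$ applies along the whole contour and gives
$$\big|p_z(x+h,y)-p_z(x,y)\big|\le C\Big(\tfrac{|h|}{|z|^{1/2}+|x-y|}\Big)^\gamma|z|^{-n/2}e^{-|x-y|^2/(c|z|)}\le C\Big(\tfrac{|h|}{|x-y|}\Big)^\gamma|z|^{-n/2}e^{-|x-y|^2/(c|z|)}.$$
The factor $(|h|/|x-y|)^\gamma$ does not depend on $z$, so what remains is exactly the $z$-integral already estimated in the derivation of \eqref{q4}; this produces $C(|h|/|x-y|)^\gamma\, t^{-n/2}(t/|x-y|^2)^{n/2+\alpha}$, which is the asserted far bound. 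The $y$-difference is handled identically, using the second term (the one involving $p_z(y,x)-p_z(y,x')$) of $({\bf H3})$.

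For the near bound I would imitate the proof of \eqref{q3}. Set $M=(I+tL)^{2m}\psi(tL)$ and write $\psi(tL)=(I+tL)^{-m}M(I+tL)^{-m}$; since the $x$-difference acts only on the output factor, the operator $\Delta_h\psi(tL)$ with kernel $K_{\psi(tL)}(x+h,y)-K_{\psi(tL)}(x,y)$ equals $\big[\Delta_h(I+tL)^{-m}\big]\,M\,(I+tL)^{-m}$, whence
$$\|\Delta_h\psi(tL)\|_{L^1\to L^\infty}\le\big\|\Delta_h(I+tL)^{-m}\big\|_{L^2\to L^\infty}\,\|M\|_{L^2\to L^2}\,\big\|(I+tL)^{-m}\big\|_{L^1\to L^2}.$$
As in Proposition~\ref{propHeat}, $({\bf H1})$ gives $\|M\|_{L^2\to L^2}\le C$ (this uses $(1+z)^{2m}\psi(z)\in H_\infty(S^0_\nu)$, i.e. $2m\le\beta-\alpha$) and $({\bf H2})$ gives $\|(I+tL)^{-m}\|_{L^1\to L^2}\le C t^{-n/4}$. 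For the remaining factor, by the subordination formula \eqref{q5}, $\Delta_hK_{(I+tL)^{-m}}(x,y)=\tfrac{1}{(m-1)!}\int_0^\infty\big(p_{ts}(x+h,y)-p_{ts}(x,y)\big)e^{-s}s^{m-1}\,ds$, and Minkowski's integral inequality reduces matters to estimating $\big\|p_{ts}(x+h,\cdot)-p_{ts}(x,\cdot)\big\|_{L^2}$. When $ts\ge 4|h|^2$ one has $(ts)^{1/2}\ge 2|h|$, hence $2|h|\le(ts)^{1/2}+|x-y|$ for \emph{every} $y$, so $({\bf H3})$ applies and, after integrating the Gaussian in $y$, produces the factor $(|h|/(ts)^{1/2})^\gamma(ts)^{-n/4}$; when $ts<4|h|^2$ one simply uses $\|p_{ts}(x+h,\cdot)-p_{ts}(x,\cdot)\|_{L^2}\le C(ts)^{-n/4}$ together with the smallness of that $s$-range and $|h|\le 2\sqrt t$. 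Carrying out the $s$-integral gives $\big\|\Delta_h(I+tL)^{-m}\big\|_{L^2\to L^\infty}\le C|h|^\gamma t^{-n/4-\gamma/2}$ provided $m$ is chosen with $n/4+\gamma/2<m\le(\beta-\alpha)/2$, which is possible by the hypothesis $\beta>n+\alpha+\gamma/2$. The $y$-difference is treated symmetrically: $\Delta_h$ then acts on the right-most resolvent, and the needed bound $\|(I+tL)^{-m}(\tau_{-h}-I)\|_{L^1\to L^2}\le C|h|^\gamma t^{-n/4-\gamma/2}$ follows by duality from the regularity of the adjoint heat kernel, i.e. the second term of $({\bf H3})$.

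I expect the near bound to be the main obstacle — concretely, extracting the correct Hölder exponent $\gamma$ from $\|\Delta_h(I+tL)^{-m}\|_{L^2\to L^\infty}$. The subtlety is that $({\bf H3})$ for the heat kernel is available only when $2|h|\le(ts)^{1/2}+|x-y|$, so one must split off the ``short time'' part $ts<4|h|^2$ of the subordination integral and there use only the Gaussian bound of $({\bf H2})$, checking that the ensuing loss is absorbed by $(|h|/(ts)^{1/2})^\gamma$ after integrating in $s$; this is exactly where $|h|\le 2\sqrt t$ and the admissible window for $m$ intervene. Everything else — the $s$- and $y$-integrals, the duality argument for the $y$-difference, and the final case analysis assembling the near bound, the far bound, and \eqref{q2} into \eqref{q8} — is routine.
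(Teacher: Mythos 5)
Your proposal is correct and follows essentially the same route as the paper's proof: the paper likewise splits the claim into a near-diagonal estimate $|h|^{-\gamma}|\Delta_h K_{\psi(tL)}|\le Ct^{-(n+\gamma)/2}$ (proved by writing $\psi(tL)=(I+tL)^{-m}\big[(I+tL)^{m}\psi(tL)\big]$, using $({\bf H1})$ for the bracketed factor and the subordination formula \eqref{q5} with $({\bf H3})$ for the resolvent) and a far estimate $|h|^{-\gamma}|\Delta_h K_{\psi(tL)}|\le Ct^{\alpha}|x-y|^{-(n+2\alpha+\gamma)}$ (proved from the contour representation exactly as in your argument), and then combines the two according to whether $|x-y|\lesssim\sqrt t$ or not. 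The only differences are organizational: the paper packages your near bound as $L^1\to\dot C^{\gamma}$ boundedness of $\psi(tL)$, which absorbs both the restriction $|h|\le 2\sqrt t$ and your splitting of the subordination integral at $ts=4|h|^2$, and it dispenses with your third ``leftover'' case by asserting the far bound for all $h$ --- your extra case analysis via \eqref{q2} is, if anything, the more careful accounting of where the constraint $2|h|\le |z|^{1/2}+|x-y|$ in $({\bf H3})$ actually holds along the contour.
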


\begin{proof}To  prove (\ref{q8}), it suffices to
consider the part $ \big|K_{\psi(t {L})}(x+h,y)-K_{\psi(t {L})}(x,y)\big|$
since the proof of $ \big|K_{\psi(t {L})}(x, y+h)-K_{\psi(t {L})}(x,y)\big|$
is similar.

To prove this,  it suffices to verify the
following:  there exists a positive constant $C$ such that such that for all $t>0$ and $x,y,h\in {\mathbb
R}^n$,
%
\begin{eqnarray}|h|^{-\gamma} \big|K_{\psi(t {L})}(x+h,y)-K_{\psi(t {L})}(x,y)\big|
\leq C t^{-(n+\gamma)/2},
\label{q9}
\end{eqnarray}
and
\begin{eqnarray}|h|^{-\gamma} \big|K_{\psi(t {L})}(x+h,y)-K_{\psi(t {L})}(x,y)\big|
\leq C {t^{\alpha}\over {|x-y|}^{n+2\alpha+\gamma}}.
\label{q9new}
\end{eqnarray}

Let us prove \eqref{q9}. It is well known that this inequality is equivalent to
 the  boundedness
of $\psi(t {L})$ from $L^1$ to the homogeneous space $ {\dot C}^{\gamma}$
with the right hand
side  of \eqref{q9} being its operator norm.
From \eqref{q5}, we see that
when $m>(n+\gamma)/2$,
 \begin{eqnarray}\label{q10}
 \big\| (I+tL)^{-m} \big\|_{L^1(\RN)\rightarrow {\dot C}^{\gamma}(\RN)}
 &\leq& {1\over  (m-1)!} \int\limits_{0}^{\infty}
 \big\| e^{-tsL}\big\|_{L^1(\RN)\rightarrow {\dot C}^{\gamma}(\RN)} e^{-s} s^{m-1} ds\nonumber\\
  &\leq& C\int\limits_{0}^{\infty}
(ts)^{-(n+\gamma)/2} e^{-s} s^{m-1} ds\nonumber\\
 &\leq& C t^{-(n+\gamma)/2}
 \end{eqnarray}
for all $t>0.$

Fix $m$ in \eqref{q10} such that $m\in ((n+\gamma)/2, \ \beta-({n\over 2}+\alpha))$.
We see that $(1+z)^m \psi(z)\in H(S^0_{\nu})$ and
$
|(1+z)^m \psi(z)|\leq C|z|^\alpha(1+|z|)^{m-\beta}.
$
By Proposition~\ref{propHeat}, we have
\begin{eqnarray}\label{q11}
\big|K_{(1+tL)^m\psi(t {L})}(x,y)\big|
\leq  C\, t^{-n/2}\left(1+{|x-y|^2\over t}\right)^{-({n\over 2}+\alpha)}
\end{eqnarray}
for all $t>0$ and $x, y\in \RN$. From this, we have
that $\big\|(I+tL)^{m}  \psi(tL)\big\|_{L^1(\RN)\rightarrow L^{1}(\RN)}\leq C.$
Therefore,
\begin{eqnarray*}
 \big\|\psi(tL)\big\|_{L^1(\RN)\rightarrow {\dot C}^{\gamma}(\RN)}
 &\leq& \big\|(I+tL)^{-m} \big\|_{L^1(\RN)\rightarrow {\dot C}^{\gamma}(\RN)}
 \big\|(I+tL)^{m}  \psi(tL)\big\|_{L^1(\RN)\rightarrow L^{1}(\RN)}\\
 &\leq& C t^{-(n+\gamma)/2},
\end{eqnarray*}
which yields \eqref{q9}.

We now prove \eqref{q9new}. From the proof of Proposition \ref{propHeat}, we have
$$K_{\psi(tL)}(x,y)=\int_{\gamma}p_z(x,y)n(z)\,dz,
$$
which implies that
\begin{align*}
&\big|K_{\psi(t {L})}(x+h,y)-K_{\psi(t {L})}(x,y)\big|=\bigg|\int_0^{\infty}[p_z(x+h,y)-p_z(x,y)]n(z)\,dz\bigg|\\
&\leq  C\int_0^{\infty} \Big( {|h|\over \sqrt{s}}\Big)^\gamma s^{-{n\over 2}}
e^{-c{{{|x-y|^2}}\over s}} \left(
\int_0^{\infty}
 e^{-\eta sw} {(tw)^{\alpha} \over 1+ (tw)^{\beta} } dw \right) ds\\
&\leq C    {|h|^\gamma \over t^{n+\gamma \over 2}  } \int_0^\infty s^{-({n\over 2}+\frac{\gamma}{2}+\alpha+1)} e^{-c{|x-y|^2\over st}} \bigg(\int_0^\infty e^{-\eta w} w^\alpha dw \bigg)\, ds\\
&\leq C    {|h|^\gamma\over t^{{n+\gamma \over 2} } }  \Big( {t\over |x-y|^2} \Big)^{{n\over 2}+{\gamma \over 2}+\alpha}\int_0^\infty s^{-({n+\gamma \over 2}+\alpha+1)} e^{-{c\over s}}\,ds\\
&\leq C    {\Big( {|h|\over |x-y|} \Big)^\gamma}   { t^\alpha\over (|x-y|^2)^{{n\over 2}+\alpha} }.
\end{align*}

The proof of Proposition~\ref{propHeat2}  is end.
\end{proof}

Recall that  $\zeta\in H(S^0_{\nu})$ with
two parameters $\alpha>0, \beta>n +\alpha+3+\gamma$ such that $\zeta$ satisfies \eqref{q1}, and $q(\lambda)=\lambda^2\zeta^2(\lambda)$.
Set $ \varphi(\lambda)=\lambda^2\zeta(\lambda).$ Then $q(\lambda)=\zeta(\lambda)\varphi(\lambda)$.
Denote $q_t(x,y)$ the kernel of the operator $q(tL)$ where $t>0$.

\begin{prop}\label{s-prop3.3} Suppose that $L$ satisfies $({\bf H1})-({\bf H3})$. Suppose that $f\in L^p(\RN), \ 1\leq p<\infty$. For any $t>0 $, $s>0 $, and $x, y\in \RN$, the following results hold.

\begin{eqnarray}\label{s-e2.1}
\big|K_{q(t^2{L}) q(s^2L)}(x,y)\big|
\leq  C\, \Big({t\over s}\wedge {s\over t}\Big) {(t+s)^{\alpha}\over (t+s+ |x-y|)^{n+\alpha}},
\end{eqnarray}

\begin{eqnarray}\label{s-e2.1.1}
\big|K_{ (t^2Lq'(t^2{L})) q(s^2L)}(x,y)\big|
\leq  C\, \Big({t\over s}\wedge {s\over t}\Big) {(t+s)^{\alpha}\over (t+s+ |x-y|)^{n+\alpha}},
\end{eqnarray}
where we use $a\wedge b$ to denote $\min\{a,b\}$ for every positive numbers $a$ and $b$.

\begin{eqnarray}\label{s-e2.2}
\big| q(t^2{L})f(x)-q(t^2L)f(y)\big|
\leq C \Big(\frac{|x-y|}{t}\Big)^{\gamma}\inf_{|u-x|<t}\mathcal{M}\big(\varphi(t^2L)f\big)(u),
\end{eqnarray}
when $|x-y|<t/2$, where we use $\mathcal{M}$ to denote the Hardy--Littlewood maximal operator.

\begin{eqnarray}\label{s-e2.3}
\big| q(s_1^2{L})f(x)-q(s_2^2L)f(x)\big|
\leq  2\Big(\ln \frac{s_2}{s_1}\Big)^{1/2} \Big(\int_{s_1}^{s_2} |r^2Lq^{(1)}(r^2L)f(x)|^2 \,\frac{dr}{r}\Big)^{1/2},
\end{eqnarray}
when $0<s_1\leq s_2.$

\end{prop}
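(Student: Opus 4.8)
The plan is to establish the four estimates \eqref{s-e2.1}--\eqref{s-e2.3} one at a time; the two almost-orthogonality kernel bounds \eqref{s-e2.1} and \eqref{s-e2.1.1} form the substantive part, while \eqref{s-e2.2} and \eqref{s-e2.3} follow quickly from Propositions \ref{prop derivative}--\ref{propHeat2}. For \eqref{s-e2.3} I would simply combine the fundamental theorem of calculus with the Cauchy--Schwarz inequality. By the chain rule $\frac{d}{dr}q(r^2\lambda)=2r\lambda\,q^{(1)}(r^2\lambda)$, so functional calculus gives $\frac{d}{dr}q(r^2L)=2rLq^{(1)}(r^2L)=\frac{2}{r}\,r^2Lq^{(1)}(r^2L)$; since $q^{(1)}$ satisfies a bound $C|z|^{1+2\alpha}/(1+|z|^{2\beta})$ by Proposition \ref{prop derivative}, the kernel of $r^2Lq^{(1)}(r^2L)$ obeys the bounds of Proposition \ref{propHeat} uniformly for $r$ in compact subsets of $(0,\infty)$ and depends continuously on $r$, which justifies differentiating $r\mapsto q(r^2L)f(x)$ under the integral for $f\in L^p$. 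Hence $q(s_2^2L)f(x)-q(s_1^2L)f(x)=2\int_{s_1}^{s_2}\big(r^2Lq^{(1)}(r^2L)f\big)(x)\,\frac{dr}{r}$, and Cauchy--Schwarz against the measure $dr/r$ on $[s_1,s_2]$ produces exactly \eqref{s-e2.3}.

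For \eqref{s-e2.2} I would use the factorisation $q=\zeta\cdot\varphi$, so that $q(t^2L)=\zeta(t^2L)\varphi(t^2L)$ and
\[
q(t^2L)f(x)-q(t^2L)f(y)=\int_{\RN}\big[K_{\zeta(t^2L)}(x,z)-K_{\zeta(t^2L)}(y,z)\big]\,\varphi(t^2L)f(z)\,dz .
\]
Since $\zeta$ satisfies the hypotheses of Proposition \ref{propHeat2} (as $\beta>n+\alpha+\gamma/2$) and $|x-y|<t/2$, the regularity estimate \eqref{q8} — applied with $t$ replaced by $t^2$ — bounds the bracketed difference by $C\big(|x-y|/(t+|x-z|)\big)^{\gamma}\,t^{-n}\big(1+|x-z|/t\big)^{-(n+2\alpha)}\le C\big(|x-y|/t\big)^{\gamma}\,t^{-n}\big(1+|x-z|/t\big)^{-(n+2\alpha)}$. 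The kernel $t^{-n}(1+|x-z|/t)^{-(n+2\alpha)}$ is radially decreasing and, up to a constant, $L^1$-normalised, so its convolution against $|\varphi(t^2L)f|$ is $\le C\,\mathcal{M}(\varphi(t^2L)f)(x)$; and because $1+|u-z|/t\le 2(1+|x-z|/t)$ whenever $|u-x|<t$, the same convolution is in fact $\le C\,\mathcal{M}(\varphi(t^2L)f)(u)$ for every such $u$, and taking the infimum yields \eqref{s-e2.2}.

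The heart of the proposition is \eqref{s-e2.1} and \eqref{s-e2.1.1}. Because $q(t^2L)$ and $q(s^2L)$ are both functions of $L$ they commute, and the right-hand side of \eqref{s-e2.1} is symmetric in $t,s$, so I may assume $t\le s$. The key device is the identity $q(\lambda)=\lambda\cdot\eta(\lambda)$ with $\eta(\lambda)=\lambda\zeta^2(\lambda)$, which lets me move one power of $L$ from the narrow-scale factor onto the wide-scale one:
\[
q(t^2L)q(s^2L)=(t^2L)\,\eta(t^2L)\,q(s^2L)=\Big(\tfrac{t}{s}\Big)^{2}\,\eta(t^2L)\,\widetilde q(s^2L),\qquad \widetilde q(\lambda):=\lambda\,q(\lambda)=\lambda^3\zeta^2(\lambda),
\]
thereby extracting the gain $(t/s)^2\le t/s=\min\{t/s,s/t\}$. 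Both $\eta$ and $\widetilde q$ obey bounds of the form $C|z|^{\alpha'}/(1+|z|^{2\beta})$, with $\alpha'=1+2\alpha$ and $\alpha'=3+2\alpha$ respectively, and $2\beta>n/2+(3+2\alpha)$ follows from $\beta>n+\alpha+\gamma+3$; so Proposition \ref{propHeat} gives $\big|K_{\eta(t^2L)}(x,z)\big|\le C\,t^{-n}(1+|x-z|/t)^{-(n+2+4\alpha)}$ and $\big|K_{\widetilde q(s^2L)}(z,y)\big|\le C\,s^{-n}(1+|z-y|/s)^{-(n+6+4\alpha)}$. A routine splitting of the $z$-integral (for $t\le s$ the factor at scale $t$ behaves as an approximate identity against the factor at the coarser scale $s$) then gives
\[
\big|K_{\eta(t^2L)\widetilde q(s^2L)}(x,y)\big|\le C\,s^{-n}\big(1+|x-y|/s\big)^{-(n+2+4\alpha)}\le C\,\frac{s^{\alpha}}{(s+|x-y|)^{n+\alpha}}\le C\,\frac{(t+s)^{\alpha}}{(t+s+|x-y|)^{n+\alpha}},
\]
where the last two steps use $2+4\alpha\ge\alpha$ and $s\le t+s\le 2s$; multiplying by $(t/s)^2$ produces \eqref{s-e2.1}.

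For \eqref{s-e2.1.1} the two factors $t^2Lq^{(1)}(t^2L)=\mu(t^2L)$, with $\mu(\lambda):=\lambda\,q^{(1)}(\lambda)$ of $\Psi$-type by Proposition \ref{prop derivative}, and $q(s^2L)$ are no longer of the same shape, so the right-hand side is still symmetric but I cannot reduce to one case; instead I would treat $t\le s$ (borrow a power of $L$ from $\mu(t^2L)=(t^2L)q^{(1)}(t^2L)$, giving $\mu(t^2L)q(s^2L)=(t/s)^2\,q^{(1)}(t^2L)\,\widetilde q(s^2L)$) and $s\le t$ (borrow a power of $L$ from $q(s^2L)=(s^2L)\eta(s^2L)$, giving $\mu(t^2L)q(s^2L)=(s/t)^2\big[(t^2L)\mu(t^2L)\big]\eta(s^2L)$) separately, and in each case run the same composition argument as for \eqref{s-e2.1}. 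I expect the main obstacle to be precisely this bookkeeping in \eqref{s-e2.1} and \eqref{s-e2.1.1}: deciding from which factor to borrow the extra power of $L$, and verifying that after each such transfer all the auxiliary symbols ($\eta,\widetilde q,q^{(1)},\mu,\lambda\mu$, etc.) still decay rapidly enough at $0$ and at $\infty$ for Proposition \ref{propHeat} to be applicable, together with the standard domain-splitting estimate for the composed kernels.
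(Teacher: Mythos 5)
Your proposal is correct and follows essentially the same route as the paper: factor out powers of $L$ from the fine-scale operator onto the coarse-scale one to gain the factor $\min\{t/s,\,s/t\}$, apply Proposition \ref{propHeat} to each resulting symbol and compose the kernels for \eqref{s-e2.1}--\eqref{s-e2.1.1}; use the splitting $q=\zeta\cdot\varphi$ together with Proposition \ref{propHeat2} for \eqref{s-e2.2}; and use the fundamental theorem of calculus plus Cauchy--Schwarz for \eqref{s-e2.3}. The only cosmetic difference is that the paper transfers the full $s^4L^2$ (writing $q(s^2L)q(t^2L)=(s/t)^4\,\zeta^2(s^2L)\,t^8L^4\zeta^2(t^2L)$, gaining $(s/t)^4$) where you transfer a single power of $L$ (gaining $(t/s)^2$), and either gain suffices.
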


\begin{proof}
To prove (\ref{s-e2.1}), it suffices to prove that if $s\leq t$, then

\begin{align}\label{s-e2.4}
\big|K_{q(t^2{L}) q(s^2L)}(x,y)\big|
\leq  C\, \Big({s\over t}\Big) {t^{\alpha}\over (t+ |x-y|)^{n+\alpha}};
\end{align}

In fact,

$$
q(s^2L)q(t^2L)=s^4L^2\zeta^2(s^2L)t^4L^2\zeta^2(t^2L)=\Big(\frac{s^4}{t^4}\Big) \zeta^2(s^2L) t^8L^4\zeta^2(t^2L).
$$
Note that $\zeta^2(x)$ and $x^4\zeta^2(x)$ satisfy the condition of Proposition \ref{propHeat}.
Then by Proposition \ref{propHeat}, we obtain
\begin{align*}
|K_{q(s^2L)q(t^2L)}(x,y)|&\leq \Big(\frac{s^4}{t^4}\Big) \int_{\RN}\big|K_{\zeta^2(s^2L)}(x,u)\big| \big|K_{t^8L^4\zeta^2(t^2L)}(u,y) \big| du\\
&\leq C \Big(\frac{s^4}{t^4}\Big) \int_{\RN}\frac{s^\alpha}{(s+|x-u|)^{n+\alpha}}\frac{t^\alpha}{(t+|u-y|)^{n+\alpha}} du\\
&\leq C\Big(\frac{s}{t}\Big) \frac{(s+t)^\alpha}{(s+t+|x-y|)^{n+\alpha}},
\end{align*}
which have proved (\ref{s-e2.4}), and hence \eqref{s-e2.1} follows.

Similarly we can obtain \eqref{s-e2.1.1} by the argument above and Propositions \ref{prop derivative} and \ref{propHeat}.

Next, let us prove (\ref{s-e2.2}). One can write

$$
q(t^2L)f(x)=\int_{\RN} K_{\zeta(t^2L)}(x,z) \varphi(t^2L)f(z)\, dz.
$$
Hence, if $|x-y|\leq t/2$, we use Proposition \ref{propHeat2} to obtain
\begin{align*}
|q(t^2L)f(x)&-q(t^2L)f(y)|\\
&\leq \int_{\RN} |K_{\zeta(t^2L)}(x,z)-K_{\zeta(t^2L)}(y,z)| |(t^2L)^2 \zeta(t^2L)f(z)|\, dz\\
&\leq \int_{\RN} \Big(\frac{|x-y|}{t} \Big)^{\gamma} \frac{t^\alpha}{(t+|x-z|)^{n+\alpha}} |\varphi(t^2L)f(z)|\, dz\\
&\leq  C \Big(\frac{|x-y|}{t}\Big)^{\gamma}\inf_{|u-x|<t}\mathcal{M}\big(\varphi(t^2L)f\big)(u).
\end{align*}

Let us prove (\ref{s-e2.3}). Observe that
\begin{align*}
 q(s_2^2{L})f(x)-q(s_1^2L)f(x)&=\int_{s_1^2}^{s_2^2} \frac{d}{dr}q(rL)f(x) \, dr\\
&=\int_{s_1^2}^{s_2^2} rL q^{(1)}(rL)f(x) \frac{dr}{r}\\
&=2 \int_{s_1}^{s_2}  r^2L q^{(1)}(r^2L)f(x) \frac{dr}{r}
\end{align*}
Hence, we use H\"older's inequality to obtain

\begin{align*}
\Big| q(t^2{L})f(x)-q(s^2L)f(x)\Big|\leq 2\Big(\ln \frac{t}{s}\Big)^{1/2}\Big(\int_s^t  \Big|r^2L q^{(1)}(r^2L)f(x)\Big|^2 \frac{dr}{r}\Big)^{1/2}.
\end{align*}

\end{proof}

\begin{remark}\label{s-remark1}
For any complex function  $\eta(\lambda)$, we denote $\bar{\eta}(\lambda):=\overline{\eta\big(\bar{\lambda}\big)}$. Recall that  $\zeta\in H(S^0_{\nu})$ with
two parameters $\alpha>0, \beta>n +\alpha+3+\gamma$ such that $\zeta$ satisfies \eqref{q1}, and $q(\lambda)=\lambda^2\zeta^2(\lambda)$. Then $\bar{q}(\lambda)=\lambda^2\bar{\zeta}^2(\lambda)$, and $\bar{\zeta}(\lambda)$ satisfy the same condition as that of $\zeta$.
We remark that the above estimates \eqref{s-e2.1}--\eqref{s-e2.3} also hold when we replace $L$ and $q$ by $L^*$ and $\bar{q}$ respectively, since when
$L$ satisfies $({\bf H1})-({\bf H3})$, $L^*$ also satisfies $({\bf H1})-({\bf H3})$ and   $\bar{q}$ satisfy the same conditions as that of  $q$.
\end{remark}

Next we recall the Littlewood--Paley theory as follows. For the proof, we refer to Theorem 6 of  \cite{AuDM}, see also (3.8) of \cite{DY2}, as well as \cite{Yan2}.
\begin{lemma}\label{g fucntion Lp}
Suppose that $L$ satisfies {\bf (H1)-(H2)}. Let $\psi\in H(S_\mu^0)$  and  there exist $\alpha>0$ and $\beta>\alpha$, such that
$$
|\psi(z)|\leq C\frac{|z|^\alpha}{1+|z|^\beta},  \quad  {\rm for \ any} \ z\in S_\mu^0.
$$
Then for any $1<p<+\infty$, there exists  constants $C_p$, such that
\begin{align*}
\Big\|\Big(\int_0^\infty
\big|\psi(t^2L)f\big|^2\frac{dt}{t}\Big)^{1/2}\Big\|_p
\leq C_p\big\|f\big\|_p.
\end{align*}
\end{lemma}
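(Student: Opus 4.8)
The plan is to reduce the vector-valued $L^p$ estimate for the continuous square function $g_\psi(f)(x) = \left(\int_0^\infty |\psi(t^2L)f(x)|^2\,\frac{dt}{t}\right)^{1/2}$ to the $L^2$ theory already available from the bounded $H_\infty$-calculus, combined with a Calderón--Zygmund argument in the vector-valued setting. First I would record that the $L^2$ bound $\|g_\psi(f)\|_2 \leq C\|f\|_2$ is exactly (a rescaled form of) the square function estimate \eqref{L2 esti of square function}: since $\psi \in \Psi(S_\nu^0)$ up to the harmless lower-order behaviour, McIntosh's theorem gives $\int_0^\infty \|\psi(t^2L)f\|_2^2\,\frac{dt}{t} \leq C\|f\|_2^2$, and Fubini converts this into the stated $L^2$ norm bound on $g_\psi(f)$. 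This is the easy endpoint and needs no kernel estimates.

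Next I would view $T$ defined by $Tf(x) := (\psi(t^2L)f(x))_{t>0}$ as an operator from scalar-valued functions into $L^2((0,\infty),\frac{dt}{t})$-valued functions, and invoke the vector-valued Calderón--Zygmund theory (as in the proof of Theorem 6 of \cite{AuDM}, cf. (3.8) of \cite{DY2}): having the $L^2 \to L^2(\mathcal H)$ bound, it suffices to verify the Hörmander-type integral condition on the operator-valued kernel $\mathbf{K}(x,y) = (K_{\psi(t^2L)}(x,y))_{t>0}$, namely
\begin{equation*}
\sup_{y,\,y'}\ \int_{|x-y|>2|y-y'|} \left\|\mathbf{K}(x,y) - \mathbf{K}(x,y')\right\|_{L^2((0,\infty),\frac{dt}{t})}\,dx \leq C,
\end{equation*}
together with the analogous condition in the first variable. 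For this I would use the pointwise heat kernel bounds: Proposition \ref{propHeat} gives $|K_{\psi(t^2L)}(x,y)| \leq C t^{-n}(1+|x-y|^2/t^2)^{-(n/2+\alpha)}$ (after rescaling $t \mapsto t^2$), and Proposition \ref{propHeat2} gives the Hölder regularity $|K_{\psi(t^2L)}(x,y') - K_{\psi(t^2L)}(x,y)| \leq C (|y-y'|/(t+|x-y|))^\gamma\, t^{2\alpha}/(t+|x-y|^2)^{n/2+\alpha}$ when $2|y-y'| \leq t + |x-y|$ (and one splits off the complementary range $t < 2|y-y'|$, estimating the two kernel terms separately using the size bound). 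Squaring, integrating in $\frac{dt}{t}$, and then integrating in $x$ over $|x-y| > 2|y-y'|$ yields a convergent integral bounded uniformly in $y,y'$; this is a routine but slightly tedious computation where the decay exponent $\beta > n + \alpha + \gamma/2$ assumed in Proposition \ref{propHeat2} is exactly what makes the $t$-integral converge.

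Then the vector-valued Calderón--Zygmund theorem upgrades the $L^2$ bound to $L^p \to L^p(\mathcal H)$ for $1 < p \leq 2$ (via weak $(1,1)$ and interpolation), and a duality argument — using that $L^*$ also satisfies {\bf (H1)--(H2)} and the same square function estimates, by Corollary E of \cite[Page 22]{ADM}, so that $g_{\bar\psi}^{L^*}$ enjoys the same $L^{p'}$ bound — covers the range $2 < p < \infty$. The main obstacle is purely the bookkeeping in the Hörmander condition: one must carefully handle the regime $t$ small versus $t$ large relative to $|x-y|$ and $|y-y'|$, and check that the lower-order factor $|z|^\alpha$ in the hypothesis on $\psi$ (rather than a clean $\psi \in \Psi$) does not obstruct the integrability — but since the construction here only requires $\psi$ with $|\psi(z)| \leq C|z|^\alpha/(1+|z|^\beta)$ and the propositions above are already stated under precisely that hypothesis, there is nothing new to prove beyond assembling the pieces. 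Since this lemma is cited from \cite{AuDM, DY2, Yan2}, the cleanest exposition is simply to point to those references for the Calderón--Zygmund machinery and note that the required kernel bounds are supplied by Propositions \ref{propHeat} and \ref{propHeat2}.
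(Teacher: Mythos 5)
Your outline is a correct and standard way to prove the estimate, but note first that the paper does not prove this lemma at all: it simply cites Theorem 6 of \cite{AuDM}, (3.8) of \cite{DY2} and \cite{Yan2}, so the relevant comparison is with those references. The substantive difference is in the hypotheses your argument consumes. The lemma is stated under ${\bf (H1)}$--${\bf (H2)}$ only, and the cited proofs are designed to work with just the Gaussian upper bound: they use the Duong--M$^{\rm c}$Intosh singular integral theory for non-smooth kernels, in which the classical H\"ormander condition on the kernel is replaced by a cancellation condition against the ``generalized approximation to the identity'' $e^{-tL}$, precisely so that no spatial regularity of $p_t(x,y)$ is needed. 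Your verification of the H\"ormander condition instead invokes Proposition \ref{propHeat2}, which requires the H\"older continuity assumption ${\bf (H3)}$. Since ${\bf (H3)}$ is a standing assumption of the paper, your proof is adequate for every application made of the lemma here, but it establishes a formally weaker statement than the one written (and than the one the references prove); you should either add ${\bf (H3)}$ to your hypotheses or switch to the non-smooth-kernel argument. Two smaller points: your $L^2$ step is fine (with $\alpha>0$, $\beta>\alpha$ one has $\psi\in\Psi(S^0_\nu)$ with $s=\min(\alpha,\beta-\alpha)$, so \eqref{L2 esti of square function} applies), but the duality step for $2<p<\infty$ is stated a little loosely: boundedness of $g^{L^*}_{\bar\psi}$ on $L^{p'}$ does not by itself transfer to $g^L_\psi$ on $L^p$; one must bound the genuine adjoint $F\mapsto\int_0^\infty\bar\psi(t^2L^*)F(\cdot,t)\,\frac{dt}{t}$ from $L^{p'}(\mathcal H)$ to $L^{p'}$, $\mathcal H=L^2((0,\infty),\frac{dt}{t})$, by running the same vector-valued Calder\'on--Zygmund argument for $L^*$. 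This is routine but is the step that actually closes the range $p>2$.
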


\vskip 0.5 true cm

\section{Boundedness of the operator $T_\delta$ on $L^2(\mathbb{R}^n)$}

 \setcounter{equation}{0}

Let
$q(z)$ be the same as in \eqref{varphi}.
%
%
Recall that  for any $f\in L^2(\RN)$, the operator $T_\delta$ in \eqref{Tdelta} is defined as
\begin{align*}
T_ \delta(f)(x):=\ln\delta\sum\limits_j\sum\limits_{\tau\in I_j}|Q_{\tau}^j| q_j(x,y_{Q_{\tau}^j})q_{\delta^{-2j}}(L)(f)(y_{Q_{\tau}^j}),
\end{align*}
where $y_{Q_{\tau}^j}$ is any point in the cube $Q_{\tau}^j$.

The main aim in this section is to show that $T_ \delta(f)(x)$ is well-defined and bounded on $L^2(\RN)$.

First, we point out that from Proposition \ref{propHeat2}, $q_j(x,y)$  is continuous in both $x$ and $y$. Hence, we see that $q_j(x,y_{Q_{\tau}^j})$ is well-defined for
any $y_{Q_{\tau}^j}$ in the cube $Q_{\tau}^j$.

Second, we consider the term $q_{\delta^{-2j}}(L)(f)(y_{Q_{\tau}^j})$, which is defined as
\begin{align}\label{se3.qjxy}
q_{\delta^{-2j}}(L)(f)(y_{Q_{\tau}^j}) = \int_{\RN} q_j(y_{Q_{\tau}^j},y)f(y)dy,\quad f\in L^2(\RN).
\end{align}
We point out that for every $f\in L^2(\RN)$, $q_{\delta^{-2j}}(L)(f)(y_{Q_{\tau}^j})$ is well-define for every
$y_{Q_{\tau}^j}\in Q_{\tau}^j$. In fact, since $q_{\delta^{-2j}}(L)$ is a bounded operator on $L^2(\RN)$, we get that
for $f\in L^2(\RN)$, $q_{\delta^{-2j}}(L)(f)$ is also in $L^2(\RN)$, and hence
$q_{\delta^{-2j}}(L)(f)(x)$ is defined for a.e. $x\in \RN$. Moreover, since $q_j(x,y)$
is continuous in $x$ and satisfies \eqref{q8}, we see that
$q_{\delta^{-2j}}(L)(f)(x)$ is also continuous in $x$. Hence $q_{\delta^{-2j}}(L)(f)(y_{Q_{\tau}^j})$ is well-defined for every
$y_{Q_{\tau}^j} \in Q_{\tau}^j$.

\begin{theorem}\label{Thm Tdelta}
Let all the notation be the same as above. We have that $T_\delta$ is well-defined and bounded on $L^2(\mathbb R^n)$.
\end{theorem}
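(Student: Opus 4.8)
The plan is to interpret $T_\delta$ as a composition of three operators and bound each factor separately on $L^2(\RN)$. Write $T_\delta = A \circ D \circ B$, where $B = q_{\delta^{-2j}}(L)$ acting pointwise in $j$, $D$ is the (diagonal-in-$j$) discretization that replaces $q_{\delta^{-2j}}(L)f$ by its value at the grid points $\{y_{Q_\tau^j}\}_{\tau\in I_j}$ weighted by $\ln\delta\,|Q_\tau^j|$, and $A$ is the synthesis operator that pairs these sampled values against the kernels $q_j(\cdot, y_{Q_\tau^j})$. More precisely, I will first establish the key intermediate quadratic estimate
\begin{align}\label{prop-Tdelta-key}
\Big\| \Big( \sum_j \sum_{\tau\in I_j} \big| q_{\delta^{-2j}}(L)f(y_{Q_\tau^j})\big|^2 \chi_{Q_\tau^j} \Big)^{1/2} \Big\|_2 \leq C \|f\|_2 ,
\end{align}
which controls the "analysis + sampling" part. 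The reverse-type synthesis bound — that for any sequence $\{\lambda_{j,\tau}\}$,
\begin{align}\label{prop-Tdelta-synth}
\Big\| \ln\delta \sum_j \sum_{\tau\in I_j} |Q_\tau^j|\, \lambda_{j,\tau}\, q_j(\cdot, y_{Q_\tau^j}) \Big\|_2 \leq C \Big\| \Big( \sum_j\sum_{\tau\in I_j} |\lambda_{j,\tau}|^2 \chi_{Q_\tau^j}\Big)^{1/2} \Big\|_2
\end{align}
— will then be obtained by a $T T^*$/duality argument using the almost-orthogonality estimate \eqref{s-e2.1} of Proposition \ref{s-prop3.3}; combining \eqref{prop-Tdelta-key} and \eqref{prop-Tdelta-synth} with $\lambda_{j,\tau} = q_{\delta^{-2j}}(L)f(y_{Q_\tau^j})$ gives $\|T_\delta f\|_2 \leq C\|f\|_2$.

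For \eqref{prop-Tdelta-key}, since $Q_\tau^j$ has side-length $\delta^{-j-M}$ which is comparable to the scale $\delta^{-j}=t$ of the operator $q_{t^2}(L)$ (with $t = \delta^{-j}$), the point value $q_{\delta^{-2j}}(L)f(y_{Q_\tau^j})$ multiplied by $\chi_{Q_\tau^j}$ is, up to the Hölder-continuity estimate \eqref{s-e2.2} for $q(t^2L)$, dominated by a discrete sample of the continuous Littlewood--Paley expression $q_{t^2}(L)f$ together with an error controlled by $\mathcal M(\varphi(t^2L)f)$. Concretely, I will bound $|q_{\delta^{-2j}}(L)f(y_{Q_\tau^j})|\chi_{Q_\tau^j}(x) \lesssim \mathcal{M}\big(\varphi(\delta^{-2j}L)f\big)(x)\chi_{Q_\tau^j}(x) + $ (a piece already bounded by $q_{\delta^{-2j}}(L)f(x)$ for $x\in Q_\tau^j$), sum the $\chi_{Q_\tau^j}$ over $\tau\in I_j$ (which tile $\RN$ with bounded overlap — in fact a partition), discretize the $\int_0^\infty \cdots dt/t$ integral into the sum over $j\in\mathbb Z$ at scales $t=\delta^{-j}$, and finally invoke the vector-valued Fefferman--Stein inequality for $\mathcal M$ together with Lemma \ref{g fucntion Lp} applied to $\psi = \varphi$ (which satisfies the hypotheses of that lemma since $\varphi(\lambda)=\lambda^2\zeta(\lambda)$ has the required decay). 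This reduces \eqref{prop-Tdelta-key} to the continuous square-function bound $\|(\int_0^\infty |q_{t^2}(L)f|^2\,dt/t)^{1/2}\|_2 \lesssim \|f\|_2$ plus the Fefferman--Stein step.

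For \eqref{prop-Tdelta-synth}, by duality it suffices to bound $\langle \ln\delta \sum_{j,\tau} |Q_\tau^j|\lambda_{j,\tau} q_j(\cdot,y_{Q_\tau^j}), g\rangle$ for $g\in L^2$; expanding, this is $\ln\delta\sum_{j,\tau}|Q_\tau^j|\lambda_{j,\tau}\,\overline{q_{\delta^{-2j}}(L^*)\bar g(y_{Q_\tau^j})}$ (using Remark \ref{s-remark1} so the estimates transfer to $L^*$), which by Cauchy--Schwarz is at most the product of $\big(\sum_{j,\tau}|\lambda_{j,\tau}|^2\chi_{Q_\tau^j}\big)^{1/2}$-type quantity and the analogue of \eqref{prop-Tdelta-key} for $L^*$ and $\bar g$; the latter is again $\lesssim\|g\|_2$. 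A completely parallel and self-contained alternative is to estimate $\|T_\delta f\|_2$ directly by $T_\delta T_\delta^*$: the kernel of $T_\delta T_\delta^*$ is $\ln^2\delta \sum_{j,j'}\sum_{\tau,\tau'}|Q_\tau^j||Q_{\tau'}^{j'}| q_j(x,y_{Q_\tau^j})\overline{q_{j'}(x',y_{Q_{\tau'}^{j'}})}\langle q_j(y_{Q_\tau^j},\cdot), q_{j'}(y_{Q_{\tau'}^{j'}},\cdot)\rangle$, and the inner products are controlled by \eqref{s-e2.1}, after which the sums over $j,j'$ and over the nets telescope by a Schur-test argument. The main obstacle is the careful bookkeeping in \eqref{prop-Tdelta-key}: one must verify that replacing a point value of $q_{\delta^{-2j}}(L)f$ at $y_{Q_\tau^j}$ by its "local maximal substitute" really is legitimate uniformly in the choice of $y_{Q_\tau^j}\in Q_\tau^j$ and in $j$, which is where \eqref{s-e2.2} and the comparability of the cube side-length $\delta^{-j-M}$ with $\delta^{-j}$ (costing only a factor $\delta^{-M\gamma}$, harmless since $M$ is fixed) are used; the rest is routine application of the Fefferman--Stein maximal inequality, Lemma \ref{g fucntion Lp}, and the Schur test via Proposition \ref{s-prop3.3}.
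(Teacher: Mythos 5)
Your a priori bound is correct and follows essentially the same route as the paper: your key quadratic estimate is precisely the paper's Lemma \ref{le L-P}, proved the same way (the Calder\'on reproducing formula from the $H_\infty$-calculus, the almost-orthogonality estimate \eqref{s-e2.1} of Proposition \ref{s-prop3.3}, domination by $\mathcal{M}(q_{t^2}(L)f)$, the Fefferman--Stein inequality, and Lemma \ref{g fucntion Lp}); and your duality/Cauchy--Schwarz step, pairing against $h\in L^2(\RN)$ and invoking the same estimate for $L^*$ and $\bar q$, is exactly how the paper bounds the finite partial sums $\sum_{j\in\Lambda_{finite}}\sum_{\tau\in I_{j,finite}}T_{\delta,j,\tau}(f)$ uniformly.

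The genuine gap is the well-definedness half of the statement: you never show that the doubly infinite series defining $T_\delta f$ converges in $L^2(\RN)$. Your argument yields $|\langle Sf,h\rangle|\leq C\|f\|_2\|h\|_2$ uniformly over finite partial sums $S$ (and even absolute convergence of the scalar series $\langle T_\delta f,h\rangle$ for each fixed $h$, by monotone convergence inside the two square roots), but uniform boundedness of partial sums together with weak convergence does not give norm convergence, and the later sections of the paper rely on norm convergence of this series (for instance in the treatment of the pieces $R_{\delta,i}$). The paper closes this gap with a randomization argument: reindexing the Rademacher functions as $r_{j,\tau}$, applying the uniform partial-sum bound to $\sum r_{j,\tau}(\omega)T_{\delta,j,\tau}(f)$ for each fixed $\omega$, and integrating in $\omega$ yields $\sum_{j,\tau}\|T_{\delta,j,\tau}(f)\|_{L^2(\RN)}^2\leq C\|f\|_{L^2(\RN)}^2$; the same device applied to differences of partial sums then shows (by contradiction) that the partial sums form a Cauchy sequence in $L^2(\RN)$. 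You would need to add this, or an equivalent convergence argument, to obtain the operator $T_\delta$ itself and not merely a bound on its finite truncations; your alternative $T_\delta T_\delta^*$/Schur-test route has the same issue.
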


Before proving this theorem, we first establish the following Littlewood--Paley estimate on $L^2(\RN)$.
\begin{lemma}\label{le L-P} There exists a positive constant $C$ such that for every $1<\delta<2$,
\begin{eqnarray}\label{e1 lemma L2 to L2}
\bigg\|\sqrt{\ln\delta}\Big(\sum\limits_j\sum\limits_{\tau\in I_j}\big|q_{\delta^{-2j}}(L)f(y_{Q_{\tau}^j})\big|^2\chi_{Q_{\tau}^j}(x)\Big)^{1/2}\bigg\|_{L^2(\mathbb{R}^n)}\leq
C_p\big\|f\big\|_{L^2(\mathbb{R}^n)}.
\end{eqnarray}
 \end{lemma}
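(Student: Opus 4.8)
The plan is to control the left-hand side of \eqref{e1 lemma L2 to L2} by a continuous square function and then invoke the $L^2$ version of the Littlewood--Paley estimate (Lemma \ref{g fucntion Lp} with $p=2$, or directly \eqref{L2 esti of square function}). First I would fix $j$ and $\tau\in I_j$ and note that for \emph{any} point $x\in Q_\tau^j$ and \emph{any} choice of $y_{Q_\tau^j}\in Q_\tau^j$ one has $|x-y_{Q_\tau^j}|\le \operatorname{diam} Q_\tau^j \le C\delta^{-j}$. The key observation is that the scale of the cubes in $I_j$ is $\delta^{-j-M}$, which is comparable (up to the fixed constant $\delta^{-M}$) to the analysis scale $\delta^{-2j}$ of $q_{\delta^{-2j}}(L)$ — wait, more precisely the relevant continuous scale is $t=\delta^{-j}$ so that $q_{t^2}(L)=q_{\delta^{-2j}}(L)$. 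Using the smoothness estimate \eqref{s-e2.2} of Proposition \ref{s-prop3.3} (applied with $t\sim\delta^{-j}$, since $|x-y_{Q_\tau^j}|< t/2$ once $M$ is large enough), I can replace the value $q_{\delta^{-2j}}(L)f(y_{Q_\tau^j})$ by its value at the centre, or better, bound
\[
\big|q_{\delta^{-2j}}(L)f(y_{Q_\tau^j})\big| \le C\inf_{|u-y_{Q_\tau^j}|<\delta^{-j}} \mathcal{M}\big(\varphi(\delta^{-2j}L)f\big)(u) + \big|q_{\delta^{-2j}}(L)f(y'_{Q_\tau^j})\big|
\]
for a reference point; in fact it is cleaner to just write, for $x\in Q_\tau^j$,
\[
\big|q_{\delta^{-2j}}(L)f(y_{Q_\tau^j})\big|\le \big|q_{\delta^{-2j}}(L)f(x)\big| + C\,\mathcal{M}\big(\varphi(\delta^{-2j}L)f\big)(x),
\]
using \eqref{s-e2.2} and the fact that $\mathcal M(\varphi(\delta^{-2j}L)f)(x)$ dominates its infimum over the ball $B(x,\delta^{-j})\supset B(y_{Q_\tau^j}, c\delta^{-j})$ after enlarging constants.

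Next I would square, sum in $\tau$ (noting the cubes $Q_\tau^j$ for fixed $j$ are disjoint and $\sum_\tau \chi_{Q_\tau^j}=1$ a.e.), and then sum in $j$, to get pointwise
\[
\ln\delta\sum_j\sum_{\tau\in I_j}\big|q_{\delta^{-2j}}(L)f(y_{Q_\tau^j})\big|^2\chi_{Q_\tau^j}(x)
\le C\ln\delta\sum_j\Big(\big|q_{\delta^{-2j}}(L)f(x)\big|^2 + \big|\mathcal M(\varphi(\delta^{-2j}L)f)(x)\big|^2\Big).
\]
The sum $\ln\delta\sum_j g(\delta^{-2j})$ is a Riemann sum for $\int_0^\infty g(t^2)\,\frac{dt}{t}$ (substituting $t=\delta^{-j}$, $dt/t \leftrightarrow \ln\delta$), so after taking $L^1$ norms of both sides (the functions are nonnegative) and interchanging sum and integral via Fubini--Tonelli, the right-hand side is controlled by
\[
C\int_0^\infty \big\|q_{t^2}(L)f\big\|_2^2\,\frac{dt}{t} + C\int_0^\infty \big\|\mathcal M(\varphi(t^2L)f)\big\|_2^2\,\frac{dt}{t}.
\]
Here I would need to be slightly careful: the Riemann-sum comparison is not literally an equality, but since $t\mapsto \|q_{t^2}(L)f\|_2$ and $t\mapsto\|\varphi(t^2L)f\|_2$ are continuous and the functions $q_{\delta^{-2j}}(L)f$ on the \emph{dyadic-}$\delta$ net are uniformly controlled (their sum over the net being $\lesssim\|f\|_2^2$ uniformly in the net by the functional calculus), a routine argument — shifting the net and averaging, or a direct domination of the discrete sum by a fixed multiple of the integral using the almost-monotonicity in $j$ coming from \eqref{s-e2.1} — gives the bound uniformly in $1<\delta<2$. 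I expect this uniformity in $\delta$ to be the only real subtlety; it is essential because Lemma \ref{le L-P} is used later with $\delta\to 1^+$.

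Finally, the first integral is $\lesssim\|f\|_2^2$ by the square function estimate \eqref{L2 esti of square function} for $L$ (here using $q=\zeta^2\varphi^2$ — rather $q(z)=z^2\zeta^2(z)$ — which lies in $\Psi(S^0_\nu)$ thanks to $\alpha>0$, $\beta>\alpha+n+\gamma+3$), and the second integral is $\lesssim\|f\|_2^2$ as well: the Fefferman--Stein vector-valued maximal inequality (or simply the $L^2$-boundedness of $\mathcal M$ combined with Minkowski's integral inequality, since here we are in $L^2$ and $\mathcal M$ is bounded on $L^2$) reduces it to $\int_0^\infty\|\varphi(t^2L)f\|_2^2\,\frac{dt}{t}$, which is again $\lesssim\|f\|_2^2$ by \eqref{L2 esti of square function} applied to $\varphi(z)=z^2\zeta(z)\in\Psi(S^0_\nu)$. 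Combining these bounds yields \eqref{e1 lemma L2 to L2} with a constant independent of $\delta\in(1,2)$, which completes the proof.
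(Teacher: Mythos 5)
Your reduction of the spatial sampling (replacing $q_{\delta^{-2j}}(L)f(y_{Q_\tau^j})$ by its value at $x\in Q_\tau^j$ plus a maximal-function error via \eqref{s-e2.2}) is sound, and the final appeals to \eqref{L2 esti of square function} and the $L^2$-boundedness of $\mathcal M$ are fine. The genuine gap is the step you yourself flag as ``the only real subtlety'' and then leave as a black box: the domination of the discrete sum $\ln\delta\sum_j \big|q_{\delta^{-2j}}(L)f(x)\big|^2$ by the continuous square function $\int_0^\infty |q_{t^2}(L)f(x)|^2\,\frac{dt}{t}$, uniformly in $1<\delta<2$. This is not a Riemann-sum comparison: the sum samples the (non-monotone) integrand at the points $t=\delta^{-j}$ and, for an arbitrary continuous integrand, $\ln\delta\sum_j g(\delta^{-j})$ is \emph{not} controlled by $\int_0^\infty g(t)\,\frac{dt}{t}$. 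Neither of your proposed fixes works as stated: there is no net to shift or average over in the scale variable (the inequality must hold for the fixed lattice $\{\delta^{-j}\}_j$), and \eqref{s-e2.1} expresses almost orthogonality of $q(t^2L)q(s^2L)$, not any ``almost-monotonicity in $j$'' of $t\mapsto q_{t^2}(L)f(x)$.

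The mechanism that actually closes this step is precisely what the paper's proof is built around, and it is where the normalization $\ln\delta$ earns its keep. The paper inserts the Calder\'on reproducing formula \eqref{e3.1}, writes $q_{\delta^{-2j}}(L)f(y_{Q_\tau^j})=c^{-1}\int_0^\infty q_{\delta^{-2j}}(L)q_{t^2}(L)\big(q_{t^2}(L)f\big)(y_{Q_\tau^j})\,\frac{dt}{t}$, and uses the kernel bound from \eqref{s-e2.1} to dominate the integrand by $\big(\frac{\delta^{-j}}{t}\wedge\frac{t}{\delta^{-j}}\big)\inf_{y\in Q_\tau^j}\mathcal M\big(q_{t^2}(L)f\big)(y)$; Cauchy--Schwarz in $t$ against the weight $\frac{\delta^{-j}}{t}\wedge\frac{t}{\delta^{-j}}$ together with the two elementary facts in \eqref{se3.2} (the $t$-integral of the weight equals $2$, while its sum over $j$ is $\le \frac{2\delta}{\delta-1}$) produces the pointwise bound $\ln\delta\sum_j\sum_\tau|\cdots|^2\chi_{Q_\tau^j}(x)\le C\ln\delta\,\frac{\delta}{\delta-1}\int_0^\infty\big(\mathcal M(q_{t^2}(L)f)(x)\big)^2\frac{dt}{t}$, and $\ln\delta\le\delta-1$ gives the uniformity. (This also handles the spatial sampling in the same stroke, so \eqref{s-e2.2} is not needed.) An alternative repair of your route is possible via \eqref{s-e2.3}: averaging $|q_{\delta^{-2j}}(L)f(x)|^2\le \frac{2}{\ln\delta}\int_{\delta^{-j}}^{\delta^{-j+1}}|q_{t^2}(L)f(x)|^2\frac{dt}{t}+C\ln\delta\int_{\delta^{-j}}^{\delta^{-j+1}}|r^2Lq^{(1)}(r^2L)f(x)|^2\frac{dr}{r}$ and summing in $j$ reduces everything to two continuous square functions bounded on $L^2$. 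Either way, an explicit cancellation between $\ln\delta$ and $\frac{1}{\delta-1}$ must be exhibited; without it the proof is incomplete.
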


\begin{proof}

First, we need a Calder\'on type reproducing formula, which is inspired from the
$H_\infty$-calculus for $L$. We start from the following fact: for $q(z)$ defined as in \eqref{varphi},
$$ {1\over2}\int_0^\infty q(t)\cdot q(t) {dt\over t}=:c,$$
it is direct to see that $c\not=0$.

Then,
by $H_\infty$-functional
calculus (\cite{Mc}), for every $f\in L^2({\Bbb R}^n)$,
%
\begin{eqnarray}\label{e3.1}
f&=&c^{-1}\int_0^\infty q_{t^2}(L)q_{t^2}(L)f\frac{dt}{t}
\end{eqnarray}
in the sense of $L^2(\RN)$. To be more precise, we have
\begin{eqnarray}\label{e3.1 prime}
f=\lim_{N\to\infty} F_N \quad{\rm\ in\ the\ sense\ of }\ L^2(\RN), {\rm\ where}\quad F_N:=c^{-1}\int_{N^{-1}}^N q_{t^2}(L)q_{t^2}(L)f\frac{dt}{t}.
\end{eqnarray}

 Then for any fixed $1< \delta<2$, from the reproducing formula \eqref{e3.1}, \eqref{se3.qjxy} and the fact that $q_{\delta^{-2j}}(L)f(x)$ is a continuous function (see the explanation below \eqref{se3.qjxy}), we have that
\begin{align}\label{se3.1 new}
q_{\delta^{-2j}}(L)f(y_{Q_{\tau}^j})&=q_{\delta^{-2j}}(L)\, \bigg(c^{-1}\int_0^\infty
q_{t^2}(L)q_{t^2}(L)f\frac{dt}{t}\bigg)\,(y_{Q_{\tau}^j}),
\end{align}
where $y_{Q_\tau^j}$ is any point in the cube $Q_\tau^j$.

 Next, from \eqref{se3.1 new} and \eqref{e3.1 prime},
 by noting that $q_{\delta^{-2j}}(L)$ is a bounded, linear operator on $L^2(\RN)$, we have that
\begin{align}\label{se3.1 final}
q_{\delta^{-2j}}(L)f(y_{Q_{\tau}^j})
&=q_{\delta^{-2j}}(L)\, \bigg(\lim_{N\to\infty} F_N\bigg)\,(y_{Q_{\tau}^j})\nonumber\\
&=\lim_{N\to\infty} q_{\delta^{-2j}}(L) \big(F_N\big)\,(y_{Q_{\tau}^j})\nonumber\\
&=\lim_{N\to\infty}  c^{-1}\int_{N^{-1}}^N q_{\delta^{-2j}}(L) q_{t^2}(L)q_{t^2}(L)f(y_{Q_{\tau}^j})\frac{dt}{t}\nonumber\\
&= c^{-1}\int_{0}^\infty q_{\delta^{-2j}}(L) q_{t^2}(L)q_{t^2}(L)f(y_{Q_{\tau}^j})\frac{dt}t,
\end{align}
where the third equality follows from the size estimate of the kernels of $q_{\delta^{-2j}}(L)$ and
$q_{t^2}(L)q_{t^2}(L)$ (see Proposition \ref{s-prop3.3}) and  Fubini's theorem.

Note that from the almost orthogonality estimates in Section 2 (Proposition  \ref{s-prop3.3}), we have
$$|q_{\delta^{-2j}}(L)q_{t^2}(L)(x,z)|\leq C
\big(\frac{ \delta^{-j}}{t}\big)\wedge\big(\frac{t}{ \delta^{-j}}\big)\frac{(t+ \delta^{-j})}{(t+ \delta^{-j}+|x-z|)^{n+1}},
$$
where $q_{\delta^{-2j}}(L)q_{t^2}(L)(x,z)$ is the kernel of $q_{\delta^{-2j}}(L)q_{t^2}(L)$.

Hence
\begin{align}\label{e2.1}
|q_{\delta^{-2j}}(L)q_{t^2}(L)q_{t^2}(L)f(y_{Q_{\tau}^j})|&=\bigg| \int_{\RN} q_{\delta^{-2j}}(L)q_{t^2}(L)(x,z) \,q_{t^2}(L)f(z) dz\bigg|\nonumber\\
&\leq
C\int_{\RN}\big(\frac{ \delta^{-j}}{t}\big)\wedge\big(\frac{t}{ \delta^{-j}}\big)
\frac{(t+ \delta^{-j})}{(t+ \delta^{-j}+|y_{Q_{\tau}^j}-z|)^{n+1}}|q_{t^2}(L)f(z)|dz\nonumber\\
&\leq
C\big(\frac{ \delta^{-j}}{t}\big)\wedge\big(\frac{t}{ \delta^{-j}}\big)\inf\limits_{y\in
Q_{\tau}^j}\mathcal{M}(q_{t^2}(L)f)(y).
\end{align}

\noindent By substituting (\ref{e2.1}) into (\ref{se3.1 final}), we have

\begin{eqnarray*}
|q_{\delta^{-2j}}(L)f(y_{Q_{\tau}^j})|\leq
C\int_0^\infty\big(\frac{ \delta^{-j}}{t}\big)\wedge\big(\frac{t}{ \delta^{-j}}\big)\inf\limits_{y\in
Q_{\tau}^j}\mathcal{M}(q_{t^2}(L)f)(y)\frac{dt}{t}.
\end{eqnarray*}

\noindent Observe that

\begin{eqnarray}\label{se3.2}
\int _0^\infty
\big(\frac{ \delta^{-j}}{t}\big)\wedge\big(\frac{t}{ \delta^{-j}}\big)\frac{dt}{t}=2,
\qquad \sum\limits_j
\big(\frac{ \delta^{-j}}{t}\big)\wedge\big(\frac{t}{ \delta^{-j}}\big)\leq
\frac{2{\delta}}{ \delta-1}.
\end{eqnarray}

\noindent We then apply H\"older's inequality, Lebesgue's theorem
and (\ref{se3.2}) to obtain
\begin{align}\label{key0}
&\ln\delta\sum\limits_j\sum\limits_{\tau\in I_j}|q_{\delta^{-2j}}(L)f(y_{Q_{\tau}^j})|^2\chi_{Q_{\tau}^j}(x)\\
&\leq C\ln\delta\sum\limits_j\sum\limits_{\tau\in
I_j}\Big(\int_0^\infty\big(\frac{ \delta^{-j}}{t}\big)\wedge\big(\frac{t}{ \delta^{-j}}\big)\inf\limits_{y\in
Q_{\tau}^j}\mathcal{M}(q_{t^2}(L)f)(y)\frac{dt}{t}\Big)^2\chi_{Q_{\tau}^j}(x)\nonumber\\
&\leq C\ln\delta\sum\limits_j\sum\limits_{\tau\in
I_j}\int_0^\infty\big(\frac{ \delta^{-j}}{t}\big)\wedge\big(\frac{t}{ \delta^{-j}}\big)
\big(\mathcal{M}(q_{t^2}(L)f)(x)\big)^2\frac{dt}{t}\chi_{Q_{\tau}^j}(x)\nonumber\\
&\leq C\ln\delta
\sum\limits_j\int_0^\infty\big(\frac{ \delta^{-j}}{t}\big)\wedge\big(\frac{t}{ \delta^{-j}}\big)
\big(\mathcal{M}(q_{t^2}(L)f)(x)\big)^2\frac{dt}{t}\nonumber\\
&\leq C\ln\delta\frac{ \delta}{ \delta-1}\int_0^\infty
\big(\mathcal{M}(q_{t^2}(L)f)(x)\big)^2\frac{dt}{t}\nonumber\\
&\leq C\int_0^\infty
\big(\mathcal{M}(q_{t^2}(L)f)(x)\big)^2\frac{dt}{t}.\nonumber
\end{align}

\noindent Therefore, it follows from the $L^2$-boundedness of the Hardy--Littlewood maximal operator and Lemma \ref{g fucntion Lp} that
\begin{align*}
&\bigg\|\sqrt{\ln\delta}\Big(\sum\limits_j\sum\limits_{\tau\in I_j}\big|q_{\delta^{-2j}}(L)f(y_{Q_{\tau}^j})\big|^2\chi_{Q_{\tau}^j}(x)\Big)^{1/2}\bigg\|_{L^2(\mathbb{R}^n)}\\
&\leq
C\Big\|\Big(\int_0^\infty
\big(\mathcal{M}(q_{t^2}(L)f)\big)^2\frac{dt}{t}\Big)^{1/2}\Big\|_{L^2(\mathbb{R}^n)} \leq C\Big\|\Big(\int_0^\infty
\big|q_{t^2}(L)f\big|^2\frac{dt}{t}\Big)^{1/2}\Big\|_{L^2(\mathbb{R}^n)}
\leq C\big\|f\big\|_{L^2(\mathbb{R}^n)},
\end{align*}

\noindent which shows that (\ref{e1 lemma L2 to L2}) holds.
\end{proof}

We now start to provide the proof for Theorem \ref{Thm Tdelta}.

\begin{proof}[Proof of Theorem \ref{Thm Tdelta}]
Let $\Lambda_{finite}$ be an arbitrary finite subset of the integers $\mathbb Z$.
For every $j\in\mathbb Z$, let $I_{j,finite}$ be an arbitrary finite subset of the index
$I_j$.

For every $f\in L^2(\RN)$, we consider the following  auxiliary operator
\begin{align}
T_{\delta,j,\tau}(f)(x):=\ln\delta\ |Q_{\tau}^j|\ q_j(x,y_{Q_{\tau}^j})\ q_{\delta^{-2j}}(L)(f)(y_{Q_{\tau}^j}).
\end{align}
First, it is easy to see that for every $h\in L^2(\RN)$,
\begin{align*}
\left\langle \sum_{j\in\Lambda_{finite}}\sum_{\tau\in I_{j,finite}} T_{\delta,j,\tau}(f),h\right\rangle&=\left\langle\ln\delta \sum_{j\in\Lambda_{finite}}\sum_{\tau\in I_{j,finite}} \ |Q_{\tau}^j|\ q_{\delta^{-2j}}(L)(\cdot,y_{Q_{\tau}^j})\ q_{\delta^{-2j}}(L)(f)(y_{Q_{\tau}^j}),h(\cdot)\right\rangle\\
&=\ln\delta \sum_{j\in\Lambda_{finite}}\sum_{\tau\in I_{j,finite}} \ |Q_{\tau}^j|\ q_{\delta^{-2j}}(L)(h)(y_{Q_{\tau}^j})\ q_{\delta^{-2j}}(L)(f)(y_{Q_{\tau}^j}),
\end{align*}
where the last equality follows from the fact that the sums on $j$ and $\tau$ are finite.
Then, by using H\"older's inequality, we obtain that
\begin{align*}
&\left|\left\langle \sum_{j\in\Lambda_{finite}}\sum_{\tau\in I_{j,finite}} T_{\delta,j,\tau}(f),h\right\rangle\right|\\
&\leq\ln\delta \bigg(\sum_{j\in\Lambda_{finite}}\sum_{\tau\in I_{j,finite}} \ |Q_{\tau}^j|\ |q_{\delta^{-2j}}(L)(h)(y_{Q_{\tau}^j})|^2 \bigg)^{1\over2}\ \bigg(\sum_{j\in\Lambda_{finite}}\sum_{\tau\in I_{j,finite}} \ |Q_{\tau}^j|\ |q_{\delta^{-2j}}(L)(f)(y_{Q_{\tau}^j})|^2 \bigg)^{1\over2}\\
&\leq \bigg\|\sqrt{\ln\delta}\bigg(\sum_{j\in\Lambda_{finite}}\sum_{\tau\in I_{j,finite}} \ |q_{\delta^{-2j}}(L)(h)(y_{Q_{\tau}^j})|^2\chi_{Q_{\tau}^j}(\cdot) \bigg)^{1\over2}\bigg\|_{L^2(\RN)}\\
&\qquad\times \bigg\|\sqrt{\ln\delta}\bigg(\sum_{j\in\Lambda_{finite}}\sum_{\tau\in I_{j,finite}} \ |q_{\delta^{-2j}}(L)(f)(y_{Q_{\tau}^j})|^2\chi_{Q_{\tau}^j}(\cdot) \bigg)^{1\over2}\bigg\|_{L^2(\RN)}\\
&\leq C \|f\|_{L^2(\RN)}\|h\|_{L^2(\RN)},
\end{align*}
where the last inequality follows from Lemma \ref{le L-P}, and hence it is clear that the constant $C$ is independent of $\delta$, $f$, $h$, $\Lambda_{finite}$ and $I_{j,finite}$.

This implies that
\begin{align}\label{key1}
\bigg\| \sum_{j\in\Lambda_{finite}}\sum_{\tau\in I_{j,finite}} T_{\delta,j,\tau}(f)\bigg\|_{L^2(\RN)}\leq C \|f\|_{L^2(\RN)}.
\end{align}

Next we use the Rademacher functions $r_{j}$ of Appendix C.1 in \cite{G}. These functions are defined for nonnegative integers $j$, but we now reindex them so that the subscript are represented by $\{j,\tau\}$, where $j\in \mathbb Z$ and $\tau\in I_j$. The fundamental property of these functions
is their orthogonality, that is
$$ \int_0^1 r_{j,\tau} (\omega) r_{j',\tau'}(\omega) d\omega =0$$
when $j\not=j'$ or $\tau\not=\tau'$.

Now for every $\Lambda_{finite}$ and $I_{j,finite}$ and for every $f\in L^2(\RN)$,
we obtain that
\begin{align}\label{Rade}
&\int_0^1 \bigg\| \sum_{j\in\Lambda_{finite}}\sum_{\tau\in I_{j,finite}} r_{j,\tau}(\omega)\ T_{\delta,j,\tau} (f)\bigg\|_{L^2(\RN)}^2d\omega\\
&= \sum_{j\in\Lambda_{finite},\ \tau\in I_{j,finite}}   \bigg\| \ T_{\delta,j,\tau} (f)\bigg\|_{L^2(\RN)}^2\nonumber\\
&\quad +   \int_0^1  \sum_{j\in\Lambda_{finite},\ \tau\in I_{j,finite}} \  \sum_{\substack{j'\in\Lambda_{finite},\ \tau'\in I_{j',finite}\\ (j,\tau)\not=(j',\tau') }}  r_{j,\tau}(\omega) r_{j',\tau'}(\omega)\ \langle T_{\delta,j,\tau} (f),T_{\delta,j',\tau'} (f)\rangle\ d\omega    \nonumber\\
&= \sum_{j\in\Lambda_{finite},\ \tau\in I_{j,finite}}   \bigg\| \ T_{\delta,j,\tau} (f)\bigg\|_{L^2(\RN)}^2\nonumber.
\end{align}

For any fixed $\omega\in[0,1]$ we now repeat the proof of \eqref{key1} for the operators $r_{j,\tau}(\omega)\ T_{\delta,j,\tau}$, and use the fact that $ r_{j,\tau}(\omega)=\pm1 $ to obtain that
\begin{align*}
\left|\left\langle \sum_{j\in\Lambda_{finite}}\sum_{\tau\in I_{j,finite}} r_{j,\tau}(\omega)T_{\delta,j,\tau}(f),h\right\rangle\right|
&\leq C \|f\|_{L^2(\RN)}\|h\|_{L^2(\RN)}, \quad\forall h\in L^2(\RN),
\end{align*}
which implies that
\begin{align}\label{key2}
\bigg\| \sum_{j\in\Lambda_{finite}}\sum_{\tau\in I_{j,finite}} r_{j,\tau}(\omega)T_{\delta,j,\tau}(f)\bigg\|_{L^2(\RN)}\leq C \|f\|_{L^2(\RN)}.
\end{align}

Combining the estimates of \eqref{Rade} and \eqref{key2}, we get that
\begin{align*}
\sum_{j\in\Lambda_{finite},\ \tau\in I_{j,finite}}   \bigg\| \ T_{\delta,j,\tau} (f)\bigg\|_{L^2(\RN)}^2
&=\int_0^1 \bigg\| \sum_{j\in\Lambda_{finite}}\sum_{\tau\in I_{j,finite}} r_{j,\tau}(\omega)\ T_{\delta,j,\tau} (f)\bigg\|_{L^2(\RN)}^2d\omega\\
&\leq  C\int_0^1 \| f\|_{L^2(\RN)}^2d\omega = C \| f\|_{L^2(\RN)}^2.
\end{align*}

By taking the limit of $\tau$ and $j$, we obtain that
\begin{align}\label{key3}
\sum_{j\in \mathbb Z,\ \tau\in I_{j}}   \bigg\| \ T_{\delta,j,\tau} (f)\bigg\|_{L^2(\RN)}^2
\leq C \| f\|_{L^2(\RN)}^2.
\end{align}

Next, we show that for every $f\in L^2(\RN)$, the sequence
$$ \bigg\{ \sum_{j=-N}^N \sum_{\tau=0}^N T_{\delta,j,\tau} (f) \bigg\} $$
is a Cauchy sequence in $L^2(\RN)$. Suppose that this is not the case. This means that there is some $\epsilon>0$ and a subsequence of integers $1<N_1<N_2<N_3<\cdots$ such that
\begin{align}\label{T tilde}
\big\| \tilde T_{\delta,k}(f) \big\|_{L^2(\RN)}\geq\epsilon,
\end{align}
where
$$ \tilde T_{\delta,k}(f):= \sum_{j=-N_{k+1}}^{N_{k+1}} \sum_{\tau=0}^{N_{k+1}} T_{\delta,j,\tau} (f)  - \sum_{j=-N_{k}}^{N_{k}} \sum_{\tau=0}^{N_{k}} T_{\delta,j,\tau} (f). $$

For any fixed $\omega\in[0,1]$, we repeat the proof of \eqref{key1} to the operator $r_{k}(\omega)\tilde T_{\delta,k}$ to obtain that
\begin{align*}
\bigg\| \sum_{k=1}^N r_k(\omega) \tilde T_{\delta,k} (f) \bigg\|_{L^2(\RN)}
\leq C\|f\|_{L^2(\RN)}.
\end{align*}
Squaring and integrating this inequality with respect to $\omega\in[0,1]$, and using \eqref{Rade}
with $\tilde T_{\delta,k} $ in the place of $T_{\delta,j,\tau}$ and $k\in \{ 1,2,\ldots, K \}$ in the place of
${j\in\Lambda_{finite},\ \tau\in I_{j,finite}} $, we get that
$$  \sum_{k=1}^K \big\| \tilde T_{\delta,k}(f) \big\|_{L^2(\RN)}^2 \leq C\|f\|_{L^2(\RN)}^2.  $$
But this contradicts \eqref{T tilde} as $K\to\infty$.

So we conclude that every sequence
$$ \bigg\{ \sum_{j=-N}^N \sum_{\tau=0}^N T_{\delta,j,\tau} (f) \bigg\} $$
is a Cauchy sequence in $L^2(\RN)$, and thus it converges to
$ T_\delta $.

This, together with \eqref{key1}, implies   that $T_\delta$ is a bounded operator on $L^2(\RN)$ with norm at most some constant $C$.
\end{proof}

\vskip 0.5 true cm

\section{Frame decompositions on $L^p(\mathbb{R}^n)$, $1<p<\infty$}

 \setcounter{equation}{0}

Suppose $\zeta\in H(S^0_{\nu})$ with
two parameters $\alpha>0, \beta>n +\alpha+3+\gamma$ such that $\zeta$ satisfies \eqref{q1}. For the sake of simplicity,
in the rest of the paper, we take $\alpha=\gamma=1$.
Recall that
$q(z)=z^2\zeta^2(z).$ Similar to Section 3, we denote $q_t(x,y)$ the kernel of the operator $q(tL)$, where $t>0$, and denote $$q_j(x,y):=q_{\delta^{-2j}}(x,y),$$ where $j\in\mathbb{Z}$.

\subsection{Littlewood--Paley $g$ functions on $L^p(\mathbb{R}^n)$, $1<p<\infty$ }

We introduce four discrete Littlewood--Paley $g$-functions.
 For any fixed
$\delta>1$, we define
\begin{eqnarray*}
g_{1,\delta}(L)f(x)&:=&\sqrt{\ln\delta}\Big(\sum\limits_j\sum\limits_{\tau\in I_j}\big|q_{\delta^{-2j}}(L)f(y_{Q_{\tau}^j})\big|^2\chi_{Q_{\tau}^j}(x)\Big)^{1/2};\\
g_{2,\delta}(L)f(x)&:=&\Big(\sum\limits_j\sum\limits_{\tau\in
I_j}\int_{\delta^{-j}}^{\delta^{-j+1}}\big|q_{t^2}(L)f(y_{Q_{\tau}^j})\big|^2
\frac{dt}{t}\chi_{Q_{\tau}^j}(x)\Big)^{1/2};\\
g_{3,\delta}(L)f(x)&:=&\Big(\sum_j \sum\limits_{\tau\in I_j}
\int_{\delta^{-j}}^{\delta^{-j+1}}\int_{Q_{\tau}^j}\big|q_{t^2}(L)f(y)\big|^2dy\frac{dt}{t}
\ {1\over |Q_{\tau}^j|}\chi_{Q_{\tau}^j}(x) \Big)^{1\over 2};\\
g_{4, \delta}(L)f(x) &:=&\Big(\sum\limits_j\sum\limits_{\tau\in I_j}
\int_{ \delta^{-j}}^{ \delta^{-j+1}}| s^2Lq^{(1)}(s^2L)  f(y_{Q_{\tau}^j})|^2\frac{ds}{s}
\chi_{Q_{\tau}^j}(x)\Big)^{1/2}.
\end{eqnarray*}

\noindent

\begin{lemma}\label{le2.1} Suppose $1<p<+\infty$. There exists a positive constant $C_p$ such that for every $1<\delta<2$,
\begin{eqnarray}\label{e1 lemma Lp to Lp}
\big\|g_{i,\delta}(L)f\big\|_{L^p(\mathbb{R}^n)}\leq
C_p\big\|f\big\|_{L^p(\mathbb{R}^n)},
\end{eqnarray}
where $i=1,2,3,4$.
 \end{lemma}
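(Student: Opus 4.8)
The plan is to reduce all four $g$-functions to the continuous Littlewood--Paley estimate of Lemma \ref{g fucntion Lp}, using the almost orthogonality bounds of Proposition \ref{s-prop3.3} together with the Hardy--Littlewood maximal function. The key preliminary observation, which must be recorded once and used repeatedly, is that for each of the dyadic families the discrete integrand is pointwise dominated by a maximal average of the continuous integrand. More precisely, for $g_{2,\delta}$ one writes, for $x\in Q_\tau^j$ and $\delta^{-j}\le t\le \delta^{-j+1}$,
\[
|q_{t^2}(L)f(y_{Q_\tau^j})|\le \inf_{u\in Q_\tau^j}\mathcal M(q_{t^2}(L)f)(u)+\text{(oscillation term)},
\]
and the oscillation $|q_{t^2}(L)f(y_{Q_\tau^j})-q_{t^2}(L)f(x)|$ is controlled by \eqref{s-e2.2}, since $\operatorname{diam}Q_\tau^j\sim\delta^{-j-M}\le \tfrac12 t$ once $M$ is large and $1<\delta<2$; thus $|q_{t^2}(L)f(y_{Q_\tau^j})|\le C\,\mathcal M(q_{t^2}(L)f)(x)$ uniformly on $Q_\tau^j$. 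Summing the characteristic functions $\chi_{Q_\tau^j}$ over $\tau\in I_j$ gives $1$, so
\[
g_{2,\delta}(L)f(x)^2\le C\sum_j\int_{\delta^{-j}}^{\delta^{-j+1}}\mathcal M(q_{t^2}(L)f)(x)^2\,\frac{dt}{t}=C\int_0^\infty \mathcal M(q_{t^2}(L)f)(x)^2\,\frac{dt}{t},
\]
and the claimed $L^p$ bound follows from the Fefferman--Stein vector-valued maximal inequality on $L^p(\ell^2)$ applied in the $t$-variable, followed by Lemma \ref{g fucntion Lp} (with $\psi=q$, which satisfies the hypothesis with $\alpha=2$, $\beta$ large).

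For $g_{3,\delta}$ the same idea applies with the pointwise value $q_{t^2}(L)f(y_{Q_\tau^j})$ replaced by the average $\big(|Q_\tau^j|^{-1}\int_{Q_\tau^j}|q_{t^2}(L)f(y)|^2dy\big)^{1/2}$; this average is again dominated by $C\,\mathcal M(q_{t^2}(L)f)(x)$ for $x\in Q_\tau^j$ (indeed directly by $\mathcal M(|q_{t^2}(L)f|^2)(x)^{1/2}$, but one may also use the oscillation estimate), and one concludes exactly as above. For $g_{4,\delta}$ one uses \eqref{s-e2.1.1}: the operator $s^2Lq^{(1)}(s^2L)$ plays the role of $q(s^2L)$, its kernel enjoys the same almost-orthogonality and (by Proposition \ref{prop derivative} and Proposition \ref{propHeat2}) the same Hölder regularity, so the oscillation of $s^2Lq^{(1)}(s^2L)f$ over $Q_\tau^j$ is again $\le C\,\mathcal M(s^2Lq^{(1)}(s^2L)f)(x)$, and Lemma \ref{g fucntion Lp} applies with $\psi(z)=zq^{(1)}(z)$, which satisfies the required decay. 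Finally $g_{1,\delta}$ is handled by the argument already carried out in the proof of Lemma \ref{le L-P}: the Calderón reproducing formula \eqref{e3.1} together with the almost orthogonality $|q_{\delta^{-2j}}(L)q_{t^2}(L)(x,z)|\le C\big(\frac{\delta^{-j}}{t}\wedge\frac{t}{\delta^{-j}}\big)\frac{t+\delta^{-j}}{(t+\delta^{-j}+|x-z|)^{n+1}}$ gives $|q_{\delta^{-2j}}(L)f(y_{Q_\tau^j})|\le C\int_0^\infty(\frac{\delta^{-j}}{t}\wedge\frac{t}{\delta^{-j}})\inf_{Q_\tau^j}\mathcal M(q_{t^2}(L)f)\,\frac{dt}{t}$, and the two elementary identities \eqref{se3.2} (Schur-type summation in $j$, integration in $t$) collapse the $\ln\delta$ factor; one then invokes the $L^p$ maximal inequality in place of the $L^2$ one used in Lemma \ref{le L-P}.

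The point to emphasize, and the only genuinely delicate step, is the uniformity of the constant $C_p$ in $\delta$ as $\delta\to1^+$. This is exactly what the pairing of the $\sqrt{\ln\delta}$ prefactor with the Schur bound $\sum_j(\frac{\delta^{-j}}{t}\wedge\frac{t}{\delta^{-j}})\le \frac{2\delta}{\delta-1}$ (and $\ln\delta\cdot\frac{\delta}{\delta-1}\le C$ for $1<\delta<2$) achieves for $g_{1,\delta}$; for $g_{2,\delta},g_{3,\delta},g_{4,\delta}$ uniformity is automatic because, after summing $\chi_{Q_\tau^j}$ and concatenating the intervals $[\delta^{-j},\delta^{-j+1})$, the discrete sum over $j$ reassembles the full integral $\int_0^\infty\cdots\frac{dt}{t}$ with no residual $\delta$-dependent constant — one must only check that the overlap of consecutive intervals $[\delta^{-j},\delta^{-j+1})$ is bounded (it is exactly a tiling, so there is no overlap at all). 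I would carry out the argument in the order $g_{2,\delta}$ (model case), then $g_{3,\delta}$ and $g_{4,\delta}$ by the same template, and finally $g_{1,\delta}$ by upgrading the $L^2$ estimate of Lemma \ref{le L-P} to $L^p$; the main obstacle, such as it is, is bookkeeping the regularity hypotheses needed to apply Proposition \ref{propHeat2} to the auxiliary multipliers $zq^{(1)}(z)$ and $\zeta^2(z)$, which is guaranteed by the standing assumption $\beta>n+\alpha+3+\gamma$.
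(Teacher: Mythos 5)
Your proposal is correct and follows essentially the same route as the paper: dominate each discrete square function pointwise by $\big(\int_0^\infty \mathcal{M}(\psi(t^2L)f)^2\,\frac{dt}{t}\big)^{1/2}$ for a suitable $\psi$ (using the almost orthogonality of Proposition \ref{s-prop3.3} and, for the point evaluations at $y_{Q_\tau^j}$, either the reproducing-formula argument of Lemma \ref{le L-P} or the oscillation bound \eqref{s-e2.2}), and then conclude by the Fefferman--Stein vector-valued maximal inequality and Lemma \ref{g fucntion Lp}, with the tiling of $(0,\infty)$ by the intervals $[\delta^{-j},\delta^{-j+1})$ ensuring uniformity in $\delta$. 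The only caveats are minor: applying \eqref{s-e2.2} on $Q_\tau^j$ with $t\ge\delta^{-j}$ requires $\sqrt{n}\,\delta^{-M}\le \tfrac12$ (harmless, since $M$ is taken large in the end), the oscillation term produces $\mathcal{M}(\varphi(t^2L)f)$ rather than $\mathcal{M}(q(t^2L)f)$ (equally covered by Lemma \ref{g fucntion Lp}), and your primary route for $g_{3,\delta}$ via $\mathcal{M}(|q_{t^2}(L)f|^2)^{1/2}$ only works for $p>2$, so the oscillation alternative you mention is the one to use for all $1<p<\infty$.
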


\begin{proof}

We first estimate $g_{1, \delta}(f)$.
Following the same estimate as in the proof of Lemma \ref{le L-P} we can obtain that
\begin{eqnarray*}
\big(g_{1, \delta}(L)f(x)\big)^2
\leq C\int_0^\infty
\big(\mathcal{M}(q_{t^2}(L)f)(x)\big)^2\frac{dt}{t}.
\end{eqnarray*}

\noindent Therefore, it follows from the vector-value maximal
theorem (see Proposition 4.5.11, \cite{G}) and Lemma \ref{g fucntion Lp} that
\begin{align*}
\Big\|g_{1, \delta}(L)f\Big\|_p&\leq
C\Big\|\Big(\int_0^\infty
\big(\mathcal{M}(q_{t^2}(L)f)\big)^2\frac{dt}{t}\Big)^{1/2}\Big\|_p \leq C\Big\|\Big(\int_0^\infty
\big|q_{t^2}(L)f\big|^2\frac{dt}{t}\Big)^{1/2}\Big\|_p
\leq C\big\|f\big\|_p,
\end{align*}

\noindent which shows that (\ref{e1 lemma Lp to Lp}) holds for $i=1$.

The proofs of \eqref{e1 lemma Lp to Lp} for $i=2,3$
 are similar to that for $i=1$. The proof of \eqref{e1 lemma Lp to Lp} for $i=4$ is similar to that for $i=1$, but via the almost orthogonality estimate \eqref{s-e2.1.1} in Proposition \ref{s-prop3.3}. We omit the details here.
\end{proof}

\begin{remark}\label{remark g function Lp}
For  $i=1,2,3,4$,   we define $g^*_{i,\delta}(L^*)$ following the same way as $g_{i,\delta}(L)$ with $L$ and $q$ replaced by $L^*$ and $\bar{q}$, respectively.
Then, by Remark \ref{s-remark1},  the above estimates \eqref{e1 lemma Lp to Lp}  also hold for  $g^*_{i,\delta}(L^*)$ for $i=1,2,3,4$.
 \end{remark}

\subsection{Proof of frame decomposition on Lebesgue spaces}

In the next two results, we obtain estimates on the norms of the operators $T_{\delta}$ and $I - T_{\delta}$ on Lebesgue spaces.

\begin{theorem}\label{th2.1}

For every  $1<p<\infty$, there exists a constant $C_p>0$ such that
$$
\|T_ \delta(f)\|_p\leq C_p \|f\|_p.
$$
\end{theorem}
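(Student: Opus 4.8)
The goal is the $L^p$-boundedness of $T_\delta$ for $1<p<\infty$, and the natural route is to reduce to the discrete Littlewood--Paley $g$-functions just introduced. First I would fix $f\in L^p(\RN)$ and $h$ in $L^{p'}(\RN)$ with $\|h\|_{p'}\leq1$, and expand the pairing: since
$$\langle T_\delta f,h\rangle=\ln\delta\sum_j\sum_{\tau\in I_j}|Q_\tau^j|\,q_{\delta^{-2j}}(L)(f)(y_{Q_\tau^j})\,\overline{q^*_{\delta^{-2j}}(L^*)(h)(y_{Q_\tau^j})},$$
where $q^*_{\delta^{-2j}}(L^*)$ has kernel $\overline{q_j(y,\cdot)}$ (so that $\int q_j(x,y)\overline{h(x)}\,dx$ appears correctly); here one uses that $q_j(x,y)$ is continuous in both variables by Proposition~\ref{propHeat2}, so the pointwise values $q^*_{\delta^{-2j}}(L^*)(h)(y_{Q_\tau^j})$ make sense. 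Then I would apply Cauchy--Schwarz in the double sum over $(j,\tau)$, pulling $\ln\delta=\tfrac12(\ln\delta)+\tfrac12(\ln\delta)$ apart, to bound $|\langle T_\delta f,h\rangle|$ by
$$\Big\|\sqrt{\ln\delta}\Big(\sum_j\sum_{\tau\in I_j}|q_{\delta^{-2j}}(L)f(y_{Q_\tau^j})|^2\chi_{Q_\tau^j}\Big)^{1/2}\Big\|_p\ \Big\|\sqrt{\ln\delta}\Big(\sum_j\sum_{\tau\in I_j}|q^*_{\delta^{-2j}}(L^*)h(y_{Q_\tau^j})|^2\chi_{Q_\tau^j}\Big)^{1/2}\Big\|_{p'},$$
since on each cube $Q_\tau^j$ one has $|Q_\tau^j|=\int_{Q_\tau^j}\chi_{Q_\tau^j}$.

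The first factor is exactly $\|g_{1,\delta}(L)f\|_p$ and the second is $\|g^*_{1,\delta}(L^*)h\|_{p'}$. By Lemma~\ref{le2.1} (case $i=1$) the first is $\leq C_p\|f\|_p$, and by Remark~\ref{remark g function Lp} the second is $\leq C_{p'}\|h\|_{p'}\leq C_{p'}$, with both constants uniform in $\delta\in(1,2)$. Taking the supremum over $h$ gives $\|T_\delta f\|_p\leq C_p\|f\|_p$. One subtlety to address carefully is that $T_\delta f$ has so far been constructed as an $L^2$-limit (Theorem~\ref{Thm Tdelta}); to justify that the $L^p$ estimate passes to that limit, I would first establish the inequality for the finite partial sums $\sum_{j\in\Lambda_{finite}}\sum_{\tau\in I_{j,finite}}T_{\delta,j,\tau}(f)$ with $f\in L^2\cap L^p$ (the estimate above applies verbatim to such finite sums, with the same constant), conclude the partial sums converge in $L^p$ as well, identify the $L^p$-limit with the $L^2$-limit $T_\delta f$ on the dense class $L^2\cap L^p$, and finally extend $T_\delta$ by density to all of $L^p$.

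The main obstacle I anticipate is not any single hard estimate but rather the bookkeeping around convergence and the correct identification of the ``adjoint'' $g$-function: one must be sure that the object paired against $h$ really is $g^*_{1,\delta}(L^*)$ in the sense of Remark~\ref{remark g function Lp}, which requires checking that $L^*$ inherits $({\bf H1})$--$({\bf H3})$ (noted in the excerpt) and that $\bar q$ satisfies the same size/regularity hypotheses as $q$ (Remark~\ref{s-remark1}). Everything else is a clean duality argument resting on Lemma~\ref{le2.1}; no new analytic input beyond the $g$-function bounds and Cauchy--Schwarz is needed.
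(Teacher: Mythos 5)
Your proposal is correct and follows essentially the same route as the paper: dualize against $h\in L^{p'}$, use the adjoint identity $\langle q_j(\cdot,y_{Q_\tau^j}),h\rangle=\overline{\bar q(\delta^{-2j}L^*)h(y_{Q_\tau^j})}$, convert the double sum into an integral via $|Q_\tau^j|=\int\chi_{Q_\tau^j}$, apply Cauchy--Schwarz and H\"older to produce $\|g_{1,\delta}(L)f\|_p\,\|g^*_{1,\delta}(L^*)h\|_{p'}$, and invoke Lemma~\ref{le2.1} with Remark~\ref{remark g function Lp}, finishing by density from $L^p\cap L^2$. Your extra care about identifying the $L^p$- and $L^2$-limits of the partial sums is a reasonable elaboration of the paper's brief density remark, not a departure from its argument.
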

\begin{proof}

To see this, we first recall
from Theorem \ref{Thm Tdelta}, $T_\delta$ is well-defined and bounded on $L^2(\RN)$.
Hence, for every
$f\in L^p(\RN)\cap L^2(\RN)$ and $h\in L^{p'}(\RN)\cap L^2(\RN)$,  from the definition of $T_\delta$ as in \eqref{Tdelta}, we have
\begin{eqnarray*}
\langle T_ \delta(f),h\rangle = \ln\delta\sum\limits_j\sum\limits_{\tau\in I_j}|Q_{\tau}^j|\ \Big\langle q_j(\cdot,y_{Q_{\tau}^j}), h(\cdot)\Big\rangle\ q_{\delta^{-2j}}(L)f(y_{Q_{\tau}^j}).
\end{eqnarray*}

We first claim that
$$ \Big\langle q_j(\cdot,y_{Q_{\tau}^j}), h(\cdot)\Big\rangle =\overline{\bar{q}(\delta^{-2j}L^*)h(y_{Q_\tau^j})}.
$$

In fact, for every $h\in L^{p'}$,  one can write
\begin{align}\label{s-bar of q}
&\Big\langle q_j(\cdot, y_{Q_\tau^j}), h(\cdot)\Big\rangle=\int_{\RN} K_{q(\delta^{-2j}L)}(x, y_{Q_\tau^j}) \overline{h(x)} \, dx\nonumber\\
&=\int_{\RN} \overline{ K_{\big(q(\delta^{-2j}L)\big)^*}(y_{Q_\tau^j},x)}\ \overline{h(x)} \, dx
= \overline{\int_{\RN}K_{\big(\bar{q}(\delta^{-2j}L^*)\big)}(y_{Q_\tau^j},x) {h(x)} \, dx}\nonumber\\
&=\overline{\bar{q}(\delta^{-2j}L^*)h(y_{Q_\tau^j})},
\end{align}
where we use $K_{\psi(tL)}(x,y)$ to denote the kernel of the operator $\psi(tL)$.

Then we combine  (\ref{s-bar of q}), Lemma
\ref{le2.1}, Remark \ref{remark g function Lp} and the H\"older inequality to obtain
\begin{eqnarray*}
\big|\langle T_ \delta(f),h\rangle\big| &\leq& \ln\delta
\sum\limits_j\sum\limits_{\tau\in I_j}|Q_{\tau}^j|\big|q_{\delta^{-2j}}(L)f(y_{Q_{\tau}^j})\big|\,\big|\bar{q}(\delta^{-2j}L^*){h}(y_{Q_{\tau}^j})\big|\\
&\leq&\ln\delta\int_{\RN} \sum\limits_j\sum\limits_{\tau\in
I_j}\big|q_{\delta^{-2j}}(L)f(y_{Q_{\tau}^j})\big|\,\big|\bar{q}(\delta^{-2j}L^*){h}(y_{Q_{\tau}^j})\big|
\chi_{Q_{\tau}^j}(x)\,dx\\
&\leq& \big\|g_{1, \delta}(L)f\big\|_p
\big\| g^*_{1, \delta}(L^*){h}  \big\|_{p'}\\
&\leq& C\|f\|_p\|h\|_{p'},
\end{eqnarray*}
which, together with the fact that $L^p(\RN)\cap L^2(\RN)$ is dense in $L^p(\RN)$ and $L^{p'}(\RN)\cap L^2(\RN)$ is dense in $L^{p'}(\RN)$, implies that
\begin{eqnarray*}
\|T_ \delta(f)\|_p=\sup\limits_{\|h\|_{p'}\leq 1}\big|\langle
T_ \delta(f),h\rangle\big| \leq C
\|f\|_p.
\end{eqnarray*}
This finishes the proof of Theorem \ref{th2.1}.
\end{proof}

We now introduce the remainder operator $R_\delta$.
\begin{definition}
Let $T_\delta$ be the same as in \eqref{Tdelta}. We now set
$$R_ \delta:=I-T_ \delta,$$
where $I$ is the identity operator on $L^2(\RN)$.
\end{definition}

\begin{theorem}\label{th2.2}
 Then there exists a
constant $1<\delta<2$ such that  $\|R_\delta(f)\|_p\leq \frac{1}{2}\|f\|_p$,
for all $1<p<\infty$.
\end{theorem}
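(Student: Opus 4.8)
\textbf{Proof plan for Theorem \ref{th2.2}.}

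The plan is to write $R_\delta = I - T_\delta$ acting on a test function $f$ and to compare $T_\delta(f)$ with a genuine Calder\'on reproducing formula for $f$, exploiting that $T_\delta$ is a Riemann-sum approximation to the continuous square-function identity. Recall the reproducing formula from \eqref{e3.1}, $f = c^{-1}\int_0^\infty q_{t^2}(L)q_{t^2}(L)f\,\frac{dt}{t}$. Splitting the integral over the dyadic-type annuli $t\in[\delta^{-j},\delta^{-j+1}]$ and rewriting each $|Q_\tau^j|\,q_j(x,y_{Q_\tau^j})$ factor as an average over the cube $Q_\tau^j$ of side-length $\delta^{-j-M}$, one can express $R_\delta(f)(x)$ as a sum of two (or three) error terms: one term coming from replacing the integral $\int_{\delta^{-j}}^{\delta^{-j+1}}(\cdot)\frac{dt}{t}$ of length $\ln\delta$ by the single value at $t=\delta^{-j}$ (handled via \eqref{s-e2.3} and the $g_4$-function), and one term coming from replacing the value of the kernel/function at a running point $y\in Q_\tau^j$ by its value at the center $y_{Q_\tau^j}$ (handled via the regularity estimate \eqref{s-e2.2} together with $g_2,g_3$). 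In both cases the gain is a positive power of the small quantity $(\ln\delta)$ or $\delta^{-M}$.

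Concretely, the key steps in order are: (i) insert the reproducing formula into the definition of $T_\delta$ and interchange sums and integrals, legitimate by the almost-orthogonality size bounds of Proposition \ref{s-prop3.3} and Fubini, exactly as was done in \eqref{se3.1 final}; (ii) for each fixed $j$ and $\tau$, write the difference between the $T_\delta$-summand and the corresponding piece of the Calder\'on integral as a combination of $q_{\delta^{-2j}}(L)f(y_{Q_\tau^j}) - q_{t^2}(L)f(y)$ terms, which by \eqref{s-e2.2} and \eqref{s-e2.3} are controlled by $\big((\ln\delta)^{1/2} + \delta^{-M\gamma}\big)$ times the relevant local maximal/square-function quantities; (iii) pair the result against $h\in L^{p'}\cap L^2$, apply H\"older's inequality in the $(j,\tau)$-sum as in the proof of Theorem \ref{th2.1}, and invoke the $L^p$-boundedness of the discrete $g$-functions $g_{i,\delta}(L)$ and their adjoint versions $g^*_{i,\delta}(L^*)$ from Lemma \ref{le2.1} and Remark \ref{remark g function Lp}, whose constants $C_p$ are \emph{uniform} in $\delta\in(1,2)$; (iv) conclude that $\|R_\delta(f)\|_p \le C_p\big((\ln\delta)^{1/2} + \delta^{-M\gamma}\big)\|f\|_p$, and then first fix $M$ large and next choose $\delta>1$ close enough to $1$ so that the right-hand factor is at most $\tfrac12$; finally extend from the dense subspace $L^p\cap L^2$ to all of $L^p$ using Theorems \ref{th2.1} and \ref{Thm Tdelta}.

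The main obstacle I expect is step (ii): organizing the error $R_\delta(f)$ so that \emph{every} term genuinely carries a positive power of $\ln\delta$ or of $\delta^{-M}$, with all constants independent of $\delta$. One must be careful that the number of scales $j$ interacting with a fixed $t$ is bounded (the geometric sum $\sum_j (\delta^{-j}/t)\wedge(t/\delta^{-j}) \le 2\delta/(\delta-1)$ from \eqref{se3.2} produces a factor that blows up as $\delta\to1$, so it must be paired against the compensating $\ln\delta$ exactly as in the passage \eqref{key0}); and one must verify that discretizing the spatial variable over cubes of side $\delta^{-j-M}$ — much smaller than the natural scale $\delta^{-j}$ — is what furnishes the $\delta^{-M\gamma}$ gain through the H\"older regularity estimate \eqref{s-e2.2}. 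Once the bookkeeping is set up so that the error is a sum of pieces each dominated by a product of two $g$-functions times a small constant, the rest is a routine duality argument identical in structure to the proof of Theorem \ref{th2.1}.
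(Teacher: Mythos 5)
Your proposal is correct and follows essentially the same route as the paper: insert the Calder\'on reproducing formula, split over the annuli $t\in[\delta^{-j},\delta^{-j+1}]$ and the cubes $Q_\tau^j$, decompose the error into pieces according to whether one is discretizing the spatial variable (gain $\delta^{-M}$ via the H\"older regularity \eqref{s-e2.2}) or the scale variable (gain $\sqrt{\ln\delta}\lesssim\sqrt{\delta-1}$ via \eqref{s-e2.3} and $g_4$), and estimate each piece by duality with the $\delta$-uniform $g$-function bounds of Lemma \ref{le2.1} and Remark \ref{remark g function Lp}. The paper organizes the error into four terms ($R_{\delta,1},\dots,R_{\delta,4}$, treating the kernel and the function, each in space and in scale) rather than your ``two (or three),'' but this is only a bookkeeping refinement of the same decomposition, and your final bound $C_p(\sqrt{\delta-1}+\delta^{-M})$ with the choice of $M$ then $\delta$ matches the paper's conclusion.
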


\begin{proof}
For any $f\in L^2(\RN)\cap L^p(\RN)$, one can write (by using $H_\infty$-functional calculus \cite{Mc})
$$
f(x)=\int_0^\infty\!\! \int_{\RN} q_{t^2}(x,y)
q_{t^2}(L)f(y)\,dy\frac{dt}{t}=\sum\limits_j\sum\limits_{\tau\in
I_j}\int_{ \delta^{-j}}^{ \delta^{-j+1}}
\!\!\int_{Q_{\tau}^j}q_{t^2}(x,y)q_{t^2}(L)f(y)dy\frac{dt}{t},
$$
where the last equality follows from the argument as in \eqref{se3.1 final}
in the sense of $L^2(\RN)$.

\noindent We then decompose $R_ \delta(f)$ into four terms:
$R_ \delta(f)=\sum\limits_{i=1}^4R_{\delta,i}(f)$, where for every $f\in L^2(\RN)$,

$$
R_{\delta,1}(f):=\sum\limits_j\sum\limits_{\tau\in
I_j}\int_{ \delta^{-j}}^{ \delta^{-j+1}}
\int_{Q_{\tau}^j}[q_{t^2}(x,y)-q_{t^2}(x,y_{Q_{\tau}^j})]q_{t^2}(L)f(y)dy\frac{dt}{t};
$$
$$
R_{\delta,2}(f):=\sum\limits_j\sum\limits_{\tau\in
I_j}\int_{ \delta^{-j}}^{ \delta^{-j+1}}
\int_{Q_{\tau}^j}[q_{t^2}(x,y_{Q_{\tau}^j})-q_j(x,y_{Q_{\tau}^j})]q_{t^2}(L)f(y)dy\frac{dt}{t};
$$
$$
R_{\delta,3}(f):=\sum\limits_j\sum\limits_{\tau\in
I_j}\int_{ \delta^{-j}}^{ \delta^{-j+1}}
\int_{Q_{\tau}^j}q_j(x,y_{Q_{\tau}^j})[q_{t^2}(L)f(y)-q_{t^2}(L)f(y_{Q_{\tau}^j})]dy\frac{dt}{t};
$$
$$
R_{\delta,4}(f):=\sum\limits_j\sum\limits_{\tau\in
I_j}\int_{ \delta^{-j}}^{ \delta^{-j+1}}
\int_{Q_{\tau}^j}q_j(x,y_{Q_{\tau}^j})[q_{t^2}(L)f(y_{Q_{\tau}^j})-q_{\delta^{-2j}}(L)f(y_{Q_{\tau}^j})]dy\frac{dt}{t}.
$$

We point out that, by repeating the argument as in the proof of Theorem \ref{Thm Tdelta}, we obtain that
all the above operators $R_{\delta,i}$, $i=1,2,3,4$, are well-defined and the series converges
in the sense of $L^2(\RN)$.

We now first estimate the norm of $R_{\delta,1}(f)$.  Applying (\ref{s-bar of q}), we have that for every $f\in L^p(\RN)\cap L^2(\RN)$,
\begin{align*}
&\|R_{\delta,1}(f)\|_p\\
 &=\sup_{ g\in L^{p'}(\RN)\cap L^2(\RN):\ \|g\|_{L^{p'}(\RN)}\leq1 } |\langle R_{\delta,1}(f),g \rangle|\\
&= \sup_{ g\in L^{p'}(\RN)\cap L^2(\RN):\ \|g\|_{L^{p'}(\RN)}\leq1 }
\bigg| \sum\limits_j\sum\limits_{\tau\in
I_j}\bigg\langle\int_{ \delta^{-j}}^{ \delta^{-j+1}}
\int_{Q_{\tau}^j}[q_{t^2}(x,y)-q_{t^2}(x,y_{Q_{\tau}^j})]q_{t^2}f(y)dy\frac{dt}{t} ,g \bigg\rangle \bigg|
 \\
&\leq \sup\limits_{ \|g\|_{L^{p'}(\RN)}\leq1 }\sum\limits_j\sum\limits_{\tau\in
I_j}\int_{ \delta^{-j}}^{ \delta^{-j+1}}
\int_{Q_{\tau}^j}|\bar{q}{(t^2L^*)}{g}(y)-\bar{q}{(t^2L^*)}g(y_{Q_{\tau}^j})||q_{t^2}(L)f(y)|dy\frac{dt}{t},
\end{align*}
where the second equality follows from the fact that  $R_{\delta,1}$
is well-defined, and the series converges in the sense of $L^2(\RN)$, and the inequality follows from
Fubini's theorem.

\noindent Note that $y\in Q_\tau^j,\ \delta^{-j}\leq t\leq \delta^{-j+1}$, we use (\ref{s-e2.2}) to get
\begin{eqnarray*}
|\bar{q}{(t^2L^*)}{g}(y)-\bar{q}{(t^2L^*)}{g}(y_{Q_{\tau}^j})|&\leq& C\bigg(\frac{|y-y_{Q_{\tau}^j}|}{t}\bigg) \inf\limits_{u\in
Q_{\tau}^j}\mathcal{M}\big(\bar{\varphi}(t^2L^*)g\big)(u)
\\
&\leq& C\delta^{(-M+1)}\inf\limits_{u\in
Q_{\tau}^j}\mathcal{M}\big(\bar{\varphi}(t^2L^*)g\big)(u).
\end{eqnarray*}

\noindent By H\"older's inequality,  vector-value maximal
theorem and Lemma \ref{g fucntion Lp},
we obtain
\begin{align}\label{se2.1}
\|R_{\delta,1}(f)\|_p&\leq C  \delta^{(-M+1)} \sup\limits_{\|g\|_{p'}\leq
1}\sum\limits_j\sum\limits_{\tau\in
I_j}\int_{{ \delta^{-j}}}^{ \delta^{-j+1}} \int_{Q_{\tau}^j}
\inf\limits_{u\in
Q_{\tau}^j}\mathcal{M}\big(\bar{\varphi}(t^2L^*)g\big)(u)\big|q_{t^2}(L)f(y)\big|dy\frac{dt}{t}\nonumber\\
&\leq C  \delta^{(-M+1)}\sup\limits_{\|g\|_{p'}\leq
1}\int_0^\infty\int_{\RN}\mathcal{M}\big(\bar{\varphi}(t^2L^*)g\big)(y)\big|q_{t^2}(L)f(y)\big|dy\frac{dt}{t}\nonumber\\
&\leq C  \delta^{(-M+1)}\sup\limits_{\|g\|_{p'}\leq
1}\Big\|\Big(\int_0^\infty\big|q_{t^2}(L)f(y)\big|^2\frac{dt}{t}\Big)^{1/2}\Big\|_p
\Big\|\Big(\int_0^\infty\mathcal{M}\big(\bar{\varphi}(t^2L^*)g\big)(y)^2\frac{dt}{t}\Big)^{1/2}\Big\|_{p'}\\
&\leq C  \delta^{(-M+1)}\|f\|_p.\nonumber
\end{align}

By similar argument, we have
\begin{eqnarray*}
\|R_{\delta,3}(f)\|_p&\leq& C  \delta^{(-M+1)} \sup\limits_{\|g\|_{p'}\leq
1}\sum\limits_j\sum\limits_{\tau\in
I_j}\int_{{ \delta^{-j}}}^{ \delta^{-j+1}} \int_{Q_{\tau}^j}
\inf\limits_{u\in
Q_{\tau}^j}\mathcal{M}\big(\varphi_{t^2}(L)(f)\big)(u)|\bar{q}(\delta^{-2j}L^*)g(y_{Q_{\tau}^j})|dy\frac{dt}{t}\\
&=& C  \delta^{(-M+1)} \sup\limits_{\|g\|_{p'}\leq
1}\sum\limits_j\sum\limits_{\tau\in
I_j}|Q_{\tau}^j|\int_{{ \delta^{-j}}}^{ \delta^{-j+1}}
 \inf\limits_{u\in
Q_{\tau}^j}\mathcal{M}\big(\varphi_{t^2}(L)(f)\big)(u)|\bar{q}(\delta^{-2j}L^*)g(y_{Q_{\tau}^j})|\frac{dt}{t}
\end{eqnarray*}

\noindent To continue, one can write
\begin{align*}
\sum\limits_j\sum\limits_{\tau\in I_j}  & |Q_{\tau}^j|
\int_{{ \delta^{-j}}}^{ \delta^{-j+1}}\inf\limits_{y\in
Q_{\tau}^j}\mathcal{M}\big(\varphi_{t^2}(L)(f)\big)(y)|\bar{q}(\delta^{-2j}L^*){g}(y_{Q_{\tau}^j})|\frac{dt}{t}\\
&\leq C\int_{\RN} \sum\limits_j\sum\limits_{\tau\in I_j}
\int_{{ \delta^{-j}}}^{ \delta^{-j+1}}\inf\limits_{y\in
Q_{\tau}^j}\mathcal{M}\big(\varphi_{t^2}(L)(f)\big)(y)|\bar{q}(\delta^{-2j}L^*){g}(y_{Q_{\tau}^j})|\chi_{Q_{\tau}^j}(x)\frac{dt}{t}dx\\
&\leq C \int_{\RN}
\sum\limits_j\sum\limits_{\tau\in I_j} \int_{{ \delta^{-j}}}^{ \delta^{-j+1}} \mathcal{M}\big(\varphi_{t^2}(L)(f)\big)(x)|\bar{q}(\delta^{-2j}L^*){g}(y_{Q_{\tau}^j})|\chi_{Q_{\tau}^j}(x)\frac{dt}{t}dx\\
&\leq C \Big\|\Big(\sum\limits_j\sum\limits_{\tau\in
I_j}\int_{{ \delta^{-j}}}^{ \delta^{-j+1}}|\mathcal{M}\big(\varphi_{t^2}(L)(f)\big)(x)|^2\frac{dt}{t}\chi_{Q_{\tau}^j}(x)\Big)^{1/2}
\Big\|_p\\
& \hskip 3cm \times \Big\|\Big(\sum\limits_j\sum\limits_{\tau\in
I_j}\int_{{ \delta^{-j}}}^{ \delta^{-j+1}}|\bar{q}(\delta^{-2j}L^*){g}(y_{Q_{\tau}^j})|^2\frac{dt}{t}\chi_{Q_{\tau}^j}(x)\Big)^{1/2}
\Big\|_{p'}\\
&\leq C
\Big\|\Big(\int_0^{\infty}\big(\mathcal{M}\big(\varphi_{t^2}(L)(f)\big)\big)^2\frac{dt}{t}\Big)^{1/2}
\Big\|_p\big\|
g^*_{1, \delta}(L^*){g}\big\|_{p'}\\
&\leq  C \big\|f\big\|_p
\big\|g\big\|_{p'},
\end{align*}
where in the last inequality above we have used Remark \ref{remark g function Lp}.   Therefore,  we show that
\begin{eqnarray}\label{se2.2}
\|R_{\delta,3}(f)\|_p&\leq& C \delta^{(-M+1)} \|f\|_p.
\end{eqnarray}

\bigskip

As for  $R_{\delta,4}(f)$,  we apply dual argument and \eqref{s-bar of q} to write
\begin{align}\label{e2.4}
&\|R_{\delta,4}(f)\|_p\nonumber\\
&\leq\sup\limits_{\|g\|_{p'}\leq 1}
\sum\limits_j\sum\limits_{\tau\in
I_j}\int_{ \delta^{-j}}^{ \delta^{-j+1}}\int_{Q_{\tau}^j}
\big|\bar{q}(\delta^{-2j}L^*){g}(y_{Q_{\tau}^j})\big|\big|q_{t^2}(L)f(y_{Q_{\tau}^j})-q_{\delta^{-2j}}(L)f(y_{Q_{\tau}^j})\big|dy\frac{dt}{t}\nonumber\\
&=\sup\limits_{\|g\|_{p'}\leq 1} \int_{\RN}
\sum\limits_j\sum\limits_{\tau\in
I_j}\int_{ \delta^{-j}}^{ \delta^{-j+1}}
\big|\bar{q}(\delta^{-2j}L^*){g}(y_{Q_{\tau}^j})\big|\big|q_{t^2}(L)f(y_{Q_{\tau}^j})-q_{\delta^{-2j}}(L)f(y_{Q_{\tau}^j})\big|\chi_{Q_{\tau}^j}(x)\frac{dt}{t}dx\nonumber\\
&\leq \sup\limits_{\|g\|_{p'}\leq 1} \sqrt{\ln \delta}\Big\|\Big(
\sum\limits_j\sum\limits_{\tau\in I_j}
\big|\bar{q}(\delta^{-2j}L^*){g}(y_{Q_{\tau}^j})\big|^2\chi_{Q_{\tau}^j}\Big)^{1/2}\Big\|_{p'}\nonumber\\
&\qquad \times\Big\|\Big(\sum\limits_j\sum\limits_{\tau\in
I_j}\int_{ \delta^{-j}}^{ \delta^{-j+1}}\big|q_{t^2}(L)f(y_{Q_{\tau}^j})-q_{\delta^{-2j}}(L)f(y_{Q_{\tau}^j})\big|^2
\frac{dt}{t}\chi_{Q_{\tau}^j}\Big)^{1/2}\Big\|_p\nonumber\\
&\leq C\Big\|\Big(\sum\limits_j\sum\limits_{\tau\in
I_j}\int_{ \delta^{-j}}^{ \delta^{-j+1}}\big|q_{t^2}(L)f(y_{Q_{\tau}^j})-q_{\delta^{-2j}}(L)f(y_{Q_{\tau}^j})\big|^2
\frac{dt}{t}\chi_{Q_{\tau}^j}\Big)^{1/2}\Big\|_p,
\end{align}

\noindent where in the last inequality we have used Remark \ref{remark g function Lp}.

 For $ \delta^{-j}\leq t< \delta^{-j+1}$, we use (\ref{s-e2.3}) to get
\begin{align}\label{e2.5}
|q_{t^2}(L)  & f(y_{Q_{\tau}^j})-q_{\delta^{-2j}}(L)f(y_{Q_{\tau}^j})|
\leq C \sqrt{\ln \delta}
\Big(\int_{ \delta^{-j}}^{ \delta^{-j+1}}\big| s^2Lq^{(1)}(s^2L) f(y_{Q_{\tau}^j})\big|^2\frac{ds}{s}\Big)^{1/2}.
\end{align}

\noindent Substituting (\ref{e2.5}) into (\ref{e2.4}) and applying
Lemma \ref{le2.1},  we have
\begin{align}\label{s-e3.1}
\Big\|\Big(\sum\limits_j&\sum\limits_{\tau\in
I_j}\int_{ \delta^{-j}}^{ \delta^{-j+1}}\big|q_{t^2}(L)f(y_{Q_{\tau}^j})-q_{\delta^{-2j}}(L)f(y_{Q_{\tau}^j})\big|^2
\frac{dt}{t}\chi_{Q_{\tau}^j}\Big)^{1/2}\Big\|_p\nonumber\\
&\leq C\sqrt{\ln \delta}\Big\|\Big(\sum\limits_j\sum\limits_{\tau\in I_j}
\int_{ \delta^{-j}}^{ \delta^{-j+1}}| s^2Lq^{(1)}(s^2L)  f(y_{Q_{\tau}^j})|^2\frac{ds}{s}
\chi_{Q_{\tau}^j}\Big)^{1/2}\Big\|_p\nonumber\\
&= C\sqrt{\ln \delta} \big\|g_{4, \delta}(L)f\big\|_p\nonumber\\
&\leq C\sqrt{\ln \delta}\|f\|_p.
\end{align}

\noindent Observe  that if $1< \delta <2$, then
$\ln  \delta\leq ( \delta-1)$. Thus, we have
\begin{eqnarray}\label{se2.3}
\|R_{\delta,4}(f)\|_p \leq C
\sqrt{(\delta -1)}  \|f\|_p.
\end{eqnarray}

For the term $R_{\delta,2}(f)$. We note that for every $y\in Q_{\tau}^j$
and $ \delta^{-j}\leq t< \delta^{-j+1}$,
$$|q_{t^2}(L)f(y)|=2 \Big|\int_{\RN} K_{\zeta(t^2L)}(y,z)\varphi(t^2L)f(z)dz\Big|\leq C\inf\limits_{u\in Q_{\tau}^j}\mathcal{M}(\varphi_{t^2}f)(u).
$$

\noindent By dual argument and \eqref{s-bar of q}, we have
\begin{align*}
&\|R_{\delta,2}(f)\|_p\nonumber\\
&\leq\sup\limits_{\|g\|_{p'}\leq 1}
\sum\limits_j\sum\limits_{\tau\in
I_j}\int_{ \delta^{-j}}^{ \delta^{-j+1}}\int_{Q_{\tau}^j}
\big|q_{t^2}(L)f(y)\big|\big|\bar{q}(t^2L^*){g}(y_{Q_{\tau}^j})-\bar{q}(\delta^{-2j}L^*)({g})(y_{Q_{\tau}^j})\big|dy\frac{dt}{t}\nonumber\\
&\leq C\sup\limits_{\|g\|_{p'}\leq 1}
\sum\limits_j\sum\limits_{\tau\in
I_j}\int_{ \delta^{-j}}^{ \delta^{-j+1}}\int_{Q_{\tau}^j}
\inf\limits_{u\in
Q_{\tau}^j}\mathcal{M}(\varphi_{t^2}f)(u)\big|\bar{q}(t^2L^*){g}(y_{Q_{\tau}^j})-\bar{q}(\delta^{-2j}L^*)({g})(y_{Q_{\tau}^j})\big|dy\frac{dt}{t}\nonumber\\
&=C\sup\limits_{\|g\|_{p'}\leq 1}
\sum\limits_j\sum\limits_{\tau\in
I_j}\int_{ \delta^{-j}}^{ \delta^{-j+1}}|Q_{\tau}^j|
\inf\limits_{u\in
Q_{\tau}^j}\mathcal{M}(\varphi_{t^2}f)(u)\big|\bar{q}(t^2L^*){g}(y_{Q_{\tau}^j})-\bar{q}(\delta^{-2j}L^*)({g})(y_{Q_{\tau}^j})\big|\frac{dt}{t}\nonumber\\
&\leq C\sup\limits_{\|g\|_{p'}\leq 1}
\int\sum\limits_j\sum\limits_{\tau\in
I_j}\int_{ \delta^{-j}}^{ \delta^{-j+1}}
\mathcal{M}(\varphi_{t^2}f)(x)\big|\bar{q}(t^2L^*){g}(y_{Q_{\tau}^j})-\bar{q}(\delta^{-2j}L^*)({g})(y_{Q_{\tau}^j})\big|\frac{dt}{t}\chi_{Q_{\tau}^j}(x)dx\nonumber\\
&\leq C \sup\limits_{\|g\|_{p'}\leq 1} \Big\|\Big(
\int_0^{\infty}\big(\mathcal{M}(\varphi_{t^2}f)\big)^2\frac{dt}{t}
\Big)^{1/2}\Big\|_{p}\nonumber\\
&\qquad \times\Big\|\Big(\sum\limits_j\sum\limits_{\tau\in
I_j}\int_{ \delta^{-j}}^{ \delta^{-j+1}}\big|\bar{q}(t^2L^*){g}(y_{Q_{\tau}^j})-\bar{q}(\delta^{-2j}L^*)({g})(y_{Q_{\tau}^j})\big|^2
\frac{dt}{t}\chi_{Q_{\tau}^j}\Big)^{1/2}\Big\|_{p'}
\end{align*}
\noindent By \eqref{s-e3.1}, Remark \ref{remark g function Lp} and vector-value maximal theorem, we  have
\begin{eqnarray}\label{se2.4}
\|R_{\delta,2}(f)\|_p\leq C\sqrt{(\delta-1)}\|f\|_p.
\end{eqnarray}

Combining (\ref{se2.1}), (\ref{se2.2}), (\ref{se2.3}) and (\ref{se2.4}),  we have, there exists a constant $C_1>0$ such that
$$
\|R_ \delta(f)\|_p\leq C_1 \big( \delta^{-M}+\sqrt{(\delta-1)}\big)\|f\|_p.
$$

\noindent We can choose $\delta$ close to 1 and $M$ large enough, such
that $C_1 \big(\delta^{-M}+\sqrt{(\delta-1)}\big)<1/2$, which completes the proof.
\end{proof}

We now apply
 the estimates of Theorems \ref{th2.1} and \ref {th2.2} to prove Theorem \ref{main1}.

\begin{proof}[Proof of Theorem \ref{main1}]
By Theorem \ref{th2.2}, we have that for each $k=1,2,\cdots$, \ \
$\|\big(R_\delta\big)^k(f)\|_p\leq 2^{-k}\|f\|_p$.  Therefore,  the operator $T_\delta(f)$
is invertible, and
\begin{eqnarray}
\big\|T_\delta^{-1}f\big\|_p=\big\|\big(I-R_\delta\big)^{-1} f\big\|_p\leq
\sum\limits_{k=0}^{+\infty}\big\|\big(R_\delta\big)^k(f)\big\|_p\leq
2\big\|f\big\|_p.
\end{eqnarray}

For every $f\in L^p(\mathbb{R}^n)$, one can write
\begin{eqnarray}\label{ee2.6}
f=T_\delta\circ T_\delta^{-1}f=\sum\limits_j\sum\limits_{\tau\in
I_j}\langle T_\delta^{-1}f,
 \psi^*_{j,\tau}\rangle  \psi_{j,\tau}  \hskip 2cm  in \ \ L^p(\mathbb{R}^n).
\end{eqnarray}

\noindent Applying Lemma \ref{le2.1}, we have
\begin{align*}
 \bigg\|\bigg( \sum\limits_j
\sum\limits_{\tau\in I_j}\big|\langle T_\delta^{-1}f,
\psi^*_{j,\tau}\rangle\big|^2
\frac{1}{|Q_{\tau}^j|}\chi_{Q_{\tau}^j}\bigg)^{1\over 2}\bigg\|_p =\Big\|g_{1,\delta}(L)(T_\delta^{-1}f)\Big\|_p
\leq C\big\|T_\delta^{-1}f\big\|_p\leq C\big\|f\big\|_p.
\end{align*}

For the left inequality, we  use the dual argument to write
\begin{eqnarray*}
\big\|f\big\|_p=\sup\limits_{\|g\|_{p'}\leq 1}\big|\langle f,
g\rangle\big|,
\end{eqnarray*}
where $p'$ is the adjoint number of $p$.

For each $f\in L^p(\mathbb{R}^n)$ and $g\in L^{p'}(\mathbb{R}^n)$,
using the equality (\ref{ee2.6}) we have
\begin{align*}
\big|\langle f, g\rangle\big|&=\ln
 \delta\Big|\sum\limits_j\sum\limits_{\tau\in I_j}\langle T_\delta^{-1}f,
\psi^*_{j,\tau}\rangle \langle \psi_{j,\tau}, g\rangle\Big|\\
&=\ln  \delta\Big|\int_{\RN} \sum\limits_j\sum\limits_{\tau\in I_j}\langle
T_\delta^{-1}f,
\psi^*_{j,\tau}\rangle \langle\psi_{j,\tau}, g\rangle\frac{1}{|Q^j_{\tau}|}\chi_{Q^j_{\tau}}(x) dx\Big|.
\end{align*}
Applying H\"older inequality and Lemma \ref{le2.1}, one
writes
\begin{align*}
\Big|\langle f, g\rangle\Big|
 &\leq\Big\|\Big(
\sum\limits_j\sum\limits_{\tau\in I_j}\big|\langle T_\delta^{-1}f,
\psi^*_{j,\tau}\rangle\big|^2\frac{1}{|Q^j_{\tau}|}\chi_{Q^j_{\tau}}\Big)^{1\over
2}\Big\|_p\cdot \Big\|\Big( \sum\limits_j\sum\limits_{\tau\in
I_j}\big|\langle g, \psi_{j,\tau}\rangle\big|^2
\frac{1}{|Q^j_{\tau}|}\chi_{Q^j_{\tau}}\Big)^{1\over 2}\Big\|_{p'}\\
&\leq C\Big\|\Big( \sum\limits_j\sum\limits_{\tau\in
I_j}\big|\langle T_\delta^{-1}f,
\psi^*_{j,\tau}\rangle\big|^2\frac{1}{|Q^j_{\tau}|}\chi_{Q^j_{\tau}}\Big)^{1\over
2}\Big\|_p\big\|g\big\|_{p'}.
\end{align*}

Therefore, we obtain that
$$
\big\|f\big\|_p\leq C\bigg\|\bigg( \sum\limits_j\sum\limits_{\tau\in
I_j}\big|\langle T_\delta^{-1}f,
\psi^*_{j,\tau}\rangle\big|^2\frac{1}{|Q^j_{\tau}|}\chi_{Q^j_{\tau}}\bigg)^{1\over
2}\bigg\|_p,
$$

\noindent which completes the proof.
\end{proof}

\vspace{0.3cm}
\section{Frame decompositions on $H_L^1(\mathbb{R}^n)$ }
 \setcounter{equation}{0}

We first recall the tent space $T_2^1(\mathbb{R}^{n+1})$ and the molecules for the Hardy space $H_L^1(\mathbb{R}^n)$.

In \cite{CMS}, Coifman, Meyer and Stein introduced and studied a new family of function spaces, the so-called ``tent spaces''.
For any function $f(y,t)$ defined on $\mathbb{R}^{n+1}$ we
will denote
$$ \mathcal{A}(f)(x)=\Big( \int_{0}^\infty \int_{|x-y|<t} |f(y,t)|^2 {dydt\over t^{n+1}} \Big)^{1/2}. $$
As in \cite{CMS}, the tent space $T_2^1$ is defined as the space of functions f such that  $\mathcal{A}(f) \in L^1 (\mathbb{R}^n)$. The resulting equivalence classes are then equipped with the norm $\|f\|_{T_2^1} = \|\mathcal{A}(f)\|_1$.

Next, a function $a(x,t)$ is called a $T_2^1$-atom if

(i) the function $a(x,t)$ is supported in $\hat{B}$ (for some ball $B\subset \mathbb{R}^n$);

(ii) $\int_{\hat{B}}|a(x,t)|^2 {dxdt\over t} \leq |B|^{-1}$,

\noindent where $\hat{B}$ is the tent of the ball $B$, defined as $\hat{B}=\{ (y,t)\in\mathbb{R}^n\times \mathbb{R}_+:\ B(y,t)\subset B \}$, and
$B(y,t)$ is the ball in $\mathbb{R}^n$ centered at $y$ with radius $t$.

Recall that in \cite{AuDM}, a function $m(x)$ is called an $L$-molecule if
\begin{align}\label{def molecule}
m(x)=\int_0^\infty t^2Le^{-t^2L}( a(\cdot,t))(x) {dt\over t},
\end{align}
where $a(t, x)$ is a $T_2^1$-atom as defined above. An $L$-molecule decomposition of $f$ in the space $H_L^1$ is first obtained in Theorem 7 of \cite{AuDM}. Here we refer to the following statement as in \cite{DY2}.

\begin{prop}[\cite{DY2}]
Let $f\in H_L^1(\mathbb{R}^n)\cap L^2(\mathbb{R}^n)$. There exist $L$-molecules $m_k$ and numbers $\lambda_k$ for $k=0,1,2,\ldots$, such that
\begin{align}\label{molecule}
f(x)=\sum_k\lambda_km_k(x).
\end{align}
The sequence $\{\lambda_k\}$ satisfies $\sum_k|\lambda_k|\leq C\|f\|_{H_L^1(\mathbb{R}^n)}$. Conversely, the decomposition \eqref{molecule} satisfies
$$ \|f\|_{H_L^1(\mathbb{R}^n)} \leq  C \sum_k|\lambda_k|.$$
\end{prop}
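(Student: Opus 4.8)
The plan is to run the tent-space argument of Coifman--Meyer--Stein, exactly as in \cite{AuDM,DY2}, organized around the projection operator
\[
\pi_L(G)(x):=\int_0^\infty t^2Le^{-t^2L}\big(G(\cdot,t)\big)(x)\,\frac{dt}{t},
\]
which by the definition \eqref{def molecule} sends every $T_2^1$-atom to an $L$-molecule. First I would record the two mapping properties of $\pi_L$ that drive everything: (a) $\pi_L$ is bounded from $T_2^2(\mathbb{R}^{n+1})$ to $L^2(\RN)$, which follows by duality from the $L^2$ square function estimate \eqref{L2 esti of square function} applied to $L^*$ (write $\langle\pi_L(G),h\rangle=\int_0^\infty\int_{\RN}G(y,t)\,\overline{t^2L^*e^{-t^2L^*}h(y)}\,dy\frac{dt}{t}$ and use Cauchy--Schwarz in the $T_2^2$ pairing); and (b) every $L$-molecule $m=\pi_L(a)$ satisfies $\|S_Lm\|_1\le C$ with $C$ independent of the atom $a$.

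\emph{From $H^1_L$ to molecules.} By the $H_\infty$-calculus of $L$ one has a nonzero constant $c$ and the reproducing formula $f=c^{-1}\int_0^\infty t^2Le^{-t^2L}\big(q(t^2L)f\big)\frac{dt}{t}$ in $L^2(\RN)$ (should the scalar $\int_0^\infty e^{-u}q(u)\,du$ vanish one simply replaces $q$ by an admissible auxiliary function, the square function norms being equivalent across all such choices). Put $F(y,t):=q(t^2L)f(y)$; then $\mathcal{A}(F)=S_Lf$, so $F\in T_2^1$ with $\|F\|_{T_2^1}=\|f\|_{H^1_{L,S_L}(\RN)}$, and by \eqref{L2 esti of square function} also $F\in T_2^2$. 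The atomic decomposition of $T_2^1$ gives $F=\sum_k\mu_kA_k$ with $T_2^1$-atoms $A_k$ and $\sum_k|\mu_k|\le C\|F\|_{T_2^1}$, the sum converging in $T_2^1$ and, since $F\in T_2^1\cap T_2^2$, also in $T_2^2$. Applying $\pi_L$ and using (a) to pass the limit through, I obtain $f=c^{-1}\pi_L(F)=\sum_k\lambda_km_k$ in $L^2(\RN)$, with $\lambda_k:=c^{-1}\mu_k$ and $m_k:=\pi_L(A_k)$ an $L$-molecule, and $\sum_k|\lambda_k|\le C\|f\|_{H^1_{L,S_L}(\RN)}$; property (b) then upgrades the convergence to $H^1_{L,S_L}(\RN)$ as well.

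\emph{From molecules to $H^1_L$.} Given $f=\sum_k\lambda_km_k$ with $m_k=\pi_L(a_k)$ and $a_k$ a $T_2^1$-atom, property (b) gives $\|f\|_{H^1_{L,S_L}(\RN)}=\|S_Lf\|_1\le\sum_k|\lambda_k|\,\|S_Lm_k\|_1\le C\sum_k|\lambda_k|$, so the whole converse reduces to proving (b). For a $T_2^1$-atom $a$ supported in $\hat{B}$ with $B=B(x_0,r)$, I would expand
\[
q(s^2L)\pi_L(a)(y)=\int_0^\infty q(s^2L)\big(t^2Le^{-t^2L}\big)(a(\cdot,t))(y)\,\frac{dt}{t}
\]
and invoke the almost-orthogonality bound for the kernel of $q(s^2L)\,t^2Le^{-t^2L}$ (of the type proved in Propositions \ref{propHeat} and \ref{s-prop3.3}). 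Then I would split $\int_{\RN}S_L\pi_L(a)=\int_{4B}+\int_{\RN\setminus 4B}$: on $4B$, use Cauchy--Schwarz together with the $L^2$ bound $\|S_L\pi_L(a)\|_2\lesssim\|\pi_L(a)\|_2\lesssim\|a\|_{T_2^2}\le|B|^{-1/2}$ coming from (a); on $\RN\setminus 4B$, integrate the off-diagonal decay of the composed kernel against the tent-support of $a$ and sum the resulting geometric series over the dyadic annuli $2^{j+1}B\setminus 2^jB$.

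The technical heart, and the step I expect to be the main obstacle, is this last global estimate: it forces one to combine the almost-orthogonality decay of $q(s^2L)\,t^2Le^{-t^2L}$ with the tent-support of the atom while simultaneously tracking the three scales $s$, $t$, and $|y-x_0|$, and then to integrate in $(y,s)$ over the annuli so as to recover a summable bound. Everything else --- the reproducing formula, the $T_2^2\to L^2$ boundedness of $\pi_L$, and the soft theory of tent spaces --- is routine given the functional calculus hypothesis and the kernel estimates established earlier.
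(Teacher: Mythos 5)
The paper does not actually prove this proposition: it is quoted from \cite{DY2} (the decomposition originating in Theorem 7 of \cite{AuDM}), and the only accompanying remark is that the series converges in $L^2(\RN)$, for which the reader is sent to Proposition 3.23 of \cite{DL}. Your outline is precisely the tent-space argument of those references --- the operator $\pi_L$, its $T_2^2\to L^2$ boundedness by duality against the $L^*$ square function, the $T_2^1$ atomic decomposition of $(y,t)\mapsto q(t^2L)f(y)$, and the uniform bound $\|S_Lm\|_{L^1}\le C$ for molecules --- so it is essentially the same approach; the one step you assert rather than prove, namely that the tent-space atomic series for an element of $T_2^1\cap T_2^2$ also converges in $T_2^2$ (which is what lets you push $\pi_L$ through the sum and get $L^2$ convergence of \eqref{molecule}), is exactly the point the paper itself flags and outsources to \cite{DL}.
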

We point out that the equality \eqref{molecule} holds in the sense of $L^2(\RN)$, for
more detail of explanation, we refer to Proposition 3.23 in \cite{DL}.

\subsection{Littlewood--Paley $g$ functions on $H_L^1(\mathbb{R}^n)$}

Next we prove that the four auxiliary Littlewood--Paley $g$ functions as defined in Section 3.1 are bounded from $H_L^1(\mathbb{R}^n)$ to $L^1(\mathbb{R}^n)$.

\begin{lemma}\label{lemma H1 to L1 molecule}Assume that $L$ satisfies ${\bf(H 1)}$, ${\bf(H 2)}$ and ${\bf(H 3)}$.
There exists a positive constant $C$, such that for  any fixed $1<\delta\leq2$  and every $f\in
H_L^1(\mathbb{R}^n)$,
\begin{eqnarray}\label{e1 lemma H1 to L1 molecule}
\big\|g_{i,\delta}(L)(f)\big\|_{L^1(\mathbb{R}^n)}\leq
C\big\|f\big\|_{H^1_L(\mathbb{R}^n)},
\end{eqnarray}
where $i=1,2,3,4$.
\end{lemma}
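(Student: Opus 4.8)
The plan is to reduce the $H^1_L \to L^1$ bound for each $g_{i,\delta}$ to a uniform (in the atom) estimate on $T^1_2$-atoms, using the molecular decomposition of $H^1_L(\RN)\cap L^2(\RN)$ recalled above together with a density argument. More precisely, by the Proposition of \cite{DY2} quoted above, it suffices to show that there is a constant $C$, independent of $\delta\in(1,2]$, such that $\|g_{i,\delta}(L)(m)\|_{L^1(\RN)}\le C$ for every $L$-molecule $m=\int_0^\infty t^2Le^{-t^2L}(a(\cdot,t))\,\frac{dt}{t}$ associated to a $T^1_2$-atom $a$ supported in $\widehat{B}$ for a ball $B=B(x_B,r_B)$; the general estimate then follows by summing over the molecular decomposition $f=\sum_k\lambda_k m_k$ and using $\sum_k|\lambda_k|\le C\|f\|_{H^1_L}$, noting that the sum converges in $L^2$ and hence, after passing to a subsequence, pointwise, so Fatou's lemma upgrades the atomic bound to all of $H^1_L\cap L^2$, and then density gives $H^1_L$.

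\smallskip

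For a fixed molecule $m$ with associated ball $B$, I would split $\RN$ into the local region $4B$ and the annuli $U_\ell=2^{\ell+1}B\setminus 2^\ell B$ for $\ell\ge 2$. On $4B$ one uses H\"older's inequality and the $L^2$-boundedness of $g_{i,\delta}(L)$ — which follows from the case $p=2$ of Lemma \ref{le2.1} (for $i=1$ this is exactly Lemma \ref{le L-P}) — together with the molecular $L^2$ bound $\|m\|_2\le C|B|^{-1/2}$ (a consequence of $L^2$-boundedness of $\int_0^\infty t^2Le^{-t^2L}(\cdot)\frac{dt}{t}$ on $T^1_2$ and property (ii) of the atom); this yields $\int_{4B}g_{i,\delta}(L)(m)\le C|4B|^{1/2}\|m\|_2\le C$. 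On each far annulus $U_\ell$ one must exploit the cancellation and decay of the molecule. Here I would use the representation $m=\int_0^\infty t^2Le^{-t^2L}(a(\cdot,t))\frac{dt}{t}$, push the operator $q_{s^2}(L)$ (or $q_{\delta^{-2j}}(L)$, resp. $s^2Lq^{(1)}(s^2L)$) through the integral, and bound $q_{s^2}(L)\,t^2Le^{-t^2L}$ via the composition estimate \eqref{s-e2.1}-type bound from Proposition \ref{s-prop3.3} (after writing $t^2Le^{-t^2L}$ in a form $\varphi(t^2L)$ and using Propositions \ref{propHeat}, \ref{propHeat2}). Since $a(\cdot,t)$ is supported in the slice $\{y:B(y,t)\subset B\}\subset B$ with $t\le r_B$, and the evaluation point lies in $U_\ell$ at distance $\gtrsim 2^\ell r_B$ from $B$, the off-diagonal kernel bounds give a gain of $\big(\tfrac{s\wedge t}{s\vee t}\big)\big(\tfrac{s\vee t}{2^\ell r_B}\big)^{n+\alpha}$, which upon integrating $\frac{ds}{s}$ (or summing over $j$) and $\frac{dt}{t}$, applying Cauchy--Schwarz in the atom's $L^2(\widehat B,\frac{dxdt}{t})$ norm, produces a bound $\lesssim 2^{-\ell(n+\alpha)}|2^\ell B|^{1/2}\cdot|B|^{-1/2}\cdot|2^\ell B|^{1/2}$ with enough room (since $\alpha=1>0$) to sum the geometric series $\sum_\ell 2^{-\ell\alpha}<\infty$.

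\smallskip

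The only structural difference among $i=1,2,3,4$ is the "inner" operator being estimated against the molecule: $q_{\delta^{-2j}}(L)$ for $i=1$ (discrete in scale), $q_{t^2}(L)$ integrated over $\delta^{-j}\le t\le\delta^{-j+1}$ for $i=2$, the same but with an extra spatial average over $Q^j_\tau$ for $i=3$, and $s^2Lq^{(1)}(s^2L)$ for $i=4$. For $i=1,2,3$ one uses \eqref{s-e2.1}; for $i=4$ one uses \eqref{s-e2.1.1} instead, which is why the hypothesis $\beta>\alpha+n+\gamma+3$ (allowing one extra derivative via Proposition \ref{prop derivative}) is in force. The passage between the discretized $g$-functions ($i=1,3$) and the continuous ones is handled exactly as in the proof of Lemma \ref{le L-P}: replace sums over $\tau\in I_j$ of $\chi_{Q^j_\tau}$-weighted terms by the corresponding integral over $\RN$, using $\sum_\tau\chi_{Q^j_\tau}=1$, and bound point evaluations $q_{t^2}(L)f(y_{Q^j_\tau})$ by $\inf_{y\in Q^j_\tau}\mathcal{M}(q_{t^2}(L)f)(y)$ where needed; but for the $H^1\to L^1$ estimate it is cleaner to work directly with the molecule and kernel bounds rather than routing through the maximal function, since $\mathcal{M}$ is not bounded on $L^1$.

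\smallskip

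\textbf{Main obstacle.} The delicate point is the far-annulus estimate: one must track the two-parameter kernel bound for $q_{s^2}(L)\,t^2Le^{-t^2L}$ carefully, keeping both the scale-ratio factor $\big(\tfrac{s}{t}\wedge\tfrac{t}{s}\big)$ (needed so the double scale integral/sum converges) and the spatial-decay factor (needed so the annuli sum converges), and then apply Cauchy--Schwarz against the atom in the right variable so that the $T^1_2$-normalization $\int_{\widehat B}|a|^2\frac{dxdt}{t}\le|B|^{-1}$ is used exactly once. Getting the bookkeeping so that the final power of $2^\ell$ is strictly negative — which is where $\alpha>0$ (here $\alpha=1$) is essential — is the crux; everything else is a routine combination of Minkowski's inequality, Fubini, and the already-established $L^2$ theory.
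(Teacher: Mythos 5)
Your proposal follows essentially the same route as the paper: reduction to a uniform bound on $L$-molecules via the molecular decomposition (with the same subsequence/a.e.\ convergence argument to justify sublinearity over the infinite sum), a local/global split with H\"older and the $L^2$ bound on $4B$, and the composition kernel estimate plus Cauchy--Schwarz against the $T^1_2$-normalization away from $4B$. The only cosmetic differences are that the paper integrates the kernel decay $|x-y_B|^{-(n+\frac12)}$ directly over $(4B)^c$ rather than summing dyadic annuli, and it makes explicit the cancellation $\sqrt{\ln\delta}\cdot(\delta-1)^{-1/2}\le C$ (from the scale sum over $j$ against the $\sqrt{\ln\delta}$ normalization of $g_{1,\delta}$) that yields uniformity in $\delta$ --- a point your bookkeeping should track explicitly.
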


\begin{proof}
We now verify (\ref{e1 lemma H1 to L1 molecule}) for $i=1$.

Note that $g_{1,\delta}(L)$ is non-negative, sublinear, and bounded on $L^2(\RN)$ and note also that
for every $f\in H^1_L(\mathbb{R}^n)\cap L^2(\RN)$, $f$ has the following molecular decomposition
$$f = \sum_{k=1}^\infty\lambda_k m_k$$
in the sense of $L^2(\RN)$ with $\sum_k|\lambda_k| \approx \|f\|_{H^1_L(\mathbb{R}^n)}$.

Note that $\sum_{k=1}^\infty\lambda_km_k = f$ in the sense of $L^2(\RN)$. So we have that
$$\lim_{N\to\infty } F_N =0\quad {\rm in}\quad L^2(\RN),$$
where $F_N :=\sum_{k=N+1}^\infty\lambda_km_k $. So we have
$$\lim_{N\to\infty } g_{1,\delta}(L) \big(F_N\big)   =0\quad {\rm in}\quad L^2(\RN).$$

So there exists a subsequence $\{g_{1,\delta}(L) \big(F_{N_j}\big)\}$
such that
$$ \lim_{j\to\infty} g_{1,\delta}(L) \big(F_{N_j}\big) =0\quad {\rm a.e.\ in}\quad \RN.  $$
Then for almost every $x\in\RN$, for any $\varepsilon>0$, there exists $J>0$ sufficiently large such that for every integer $j>J$, we have
\begin{align*}
|g_{1,\delta}(L)f(x)| &= \bigg|g_{1,\delta}(L) \bigg( \sum_{k=1}^\infty\lambda_km_k\bigg)(x)\bigg| = \bigg|g_{1,\delta}(L) \bigg(\sum_{k=1}^{N_j}\lambda_km_k + \sum_{k=N_j+1}^\infty\lambda_km_k\bigg)(x)\bigg| \\
&\leq \bigg|g_{1,\delta}(L) \bigg(\sum_{k=1}^{N_j}\lambda_km_k\bigg)(x)\bigg| + \bigg|g_{1,\delta}(L)\bigg( \sum_{k=N_j+1}^\infty\lambda_km_k\bigg)(x)\bigg|\\
&\leq \sum_{k=1}^{N_j}|\lambda_k|\, \big|g_{1,\delta}(L)(m_k)(x) \big|+\varepsilon,
\end{align*}
which gives
$$ |g_{1,\delta}(L)f(x)| \leq \sum_{k=1}^{\infty}|\lambda_k|\, \big|g_{1,\delta}(L)(m_k)(x) \big|, \quad {\rm a.e.}\ x\in\RN.$$
Hence we obtain that
$$ \|g_{1,\delta}(L)f\|_{L^1(\RN)} \leq \sum_{k=1}^{\infty}|\lambda_k|\, \big\|g_{1,\delta}(L)(m_k) \big\|_{L^1(\RN)}.$$

As a consequence, to prove \eqref{e1 lemma H1 to L1 molecule}, it suffices to prove that  there exists a positive constant $C$ independent of $\delta$ such that
  for every molecule $m$ as defined in \eqref{def molecule}, the following estimate
\begin{eqnarray}\label{g1 uni bd on molecule m}
\|g_{1,\delta}(L)(m)\|_{L^1(\mathbb{R}^n)}\leq C.
\end{eqnarray}
holds. To verify this,  we first see that
\begin{eqnarray*}
\|g_{1,\delta}(L)(m)\|_{L^1(\mathbb{R}^n)}=\int_{4B}|g_{1,\delta}(L)(m)(x)|dx+\int_{(4B)^c}|g_{1,\delta}(L)(m)(x)|dx=:I+II,
\end{eqnarray*}
where $B$ is the ball associated to $m$.

As for $I$, we have
\begin{align}\label{I}
I&=\int_{4B}|g_{1,\delta}(L)(m)(x)|dx \leq  |4B|^{1/2}\|g_{1,\delta}(L)(m)\|_{L^2(\mathbb{R}^n)}\nonumber\\
&\leq  C|4B|^{1/2}\|m\|_{L^2(\mathbb{R}^n)}\leq C |4B|^{1/2} \big(\int_0^{r_B }\!\!\int_{B}|a(x,t)|^2 dx  {dt\over t}\big)^{1/2}\leq C,
\end{align}
where in the second inequality we have used Lemma \ref{le2.1} and in the third inequality we have used Lemma 4.3 in \cite{DY2}  and the definition of  the molecule $m$.

Now we turn to $II$.
\begin{align}\label{so-e3.1}
II&=\sqrt{\ln \delta}\int_{(4B)^c}\Big(\sum_j\sum_{\tau\in
I_j}|q_{j}(m)(y_{Q_{\tau}^j})|^2\chi_{Q_{\tau}^j}(x)\Big)^{1/2}dx\\
&= \sqrt{\ln \delta}\int_{(4B)^c}\Big(\sum_{j}\sum_{\tau\in I_j}\Big| q_{j} \Big( \int_0^\infty t^2Le^{-t^2L}\big( a(\cdot,t)  \big) {dt\over t} \Big)(y_{Q_{\tau}^j}) \Big|^2\chi_{Q_{\tau}^j}(x)\Big)^{1/2}dx\nonumber\\
&=\sqrt{\ln \delta} \int_{(4B)^c}\Big(\sum_{j}\sum_{\tau\in I_j}\Big|   \int_0^{r_B } \int_B K_{q_{j}t^2Le^{-t^2L}}(y_{Q_{\tau}^j},y)  a(y,t)   {dydt\over t}  \Big|^2\chi_{Q_{\tau}^j}(x)\Big)^{1/2}dx\nonumber\\
&\leq\sqrt{\ln \delta}\int_{(4B)^c}\Big(\sum_{j}\sum_{\tau\in
I_j}\Big| \int_0^{r_B }\int_{B}  \Big({\delta^{-j}\over t}\wedge {t\over \delta^{-j}}\Big) {(\delta^{-j}+ t)\over (\delta^{-j}+ t+|y_{Q_{\tau}^j}-y|)^{n+1}}\nonumber\\
&\hskip 7cm |a(y,t)|{dydt\over t}\Big|^2\chi_{Q_{\tau}^j}(x)\Big)^{1/2}dx, \nonumber
\end{align}
where in the last inequality we have used the similar argument as that of (\ref{s-e2.1}).
Denote by $y_B$  the center of the ball $B$. Observe that for $0<t<r_B, x\in Q_{\tau}^j\cap (4B)^c, y\in B$,  there holds
\begin{align}\label{so-e3.2}
\delta^{-j}+ t+|y_{Q_{\tau}^j}-y|\geq C \big(\delta^{-j}+ t+|y_B-x|\big).
\end{align}
In fact, if $\delta^{-j} >r_B$, then
$$|y_B-x|\leq |x-y_{Q_{\tau}^j}|+|y_{Q_{\tau}^j}-y|+|y-y_B|\leq 2\delta^{-j}+ |y_{Q_{\tau}^j}-y|;$$
if $\delta^{-j}\leq r_B$, then $|y-y_B|\leq \frac{1}{2}|y_B-x|$, which implies that
$$|y_B-x|\leq |x-y_{Q_{\tau}^j}|+|y_{Q_{\tau}^j}-y|+\frac{1}{2}|y_B-x|,$$
and thus
$$|y_B-x|\leq 2(|x-y_{Q_{\tau}^j}|+|y_{Q_{\tau}^j}-y|)\leq 2\big(\delta^{-j}+ |y_{Q_{\tau}^j}-y|).$$

\smallskip

We insert the inequality (\ref{so-e3.2}) into (\ref{so-e3.1}), and get
\begin{align*}
II&\leq C\sqrt{\ln \delta}\int_{(4B)^c}\Big(\sum_{j}\Big| \int_0^{r_B }\!\!\int_{B}  \Big({\delta^{-j}\over t}\wedge {t\over \delta^{-j}}\Big) {(\delta^{-j}\vee t)^{1 \over 2}\over (\delta^{-j}\vee t+|x-y_B|)^{n+{1 \over 2}}}
|a(y,t)|{dydt\over t}\Big|^2\Big)^{1/2}dx\\
&\leq C\sqrt{\ln \delta}\int_{(4B)^c}\Big(\sum_{j}\Big| \int_0^{r_B }\!\!\int_{B}  \Big({\delta^{-j}\over t}\wedge {t\over \delta^{-j}}\Big) {(\delta^{-j}\vee t)^{1 \over 2}\over {|x-y_B|}^{n+{1 \over 2}}}
|a(y,t)|{dydt\over t}\Big|^2\Big)^{1/2}dx\\
&\leq C\sqrt{\ln \delta}\int_{(4B)^c}\Big(\sum_{j} \int_0^{r_B }\!\!\int_{B}  \Big({\delta^{-j}\over t}\wedge {t\over \delta^{-j}}\Big)^2 { (\delta^{-j}\vee t)\over {|x-y_B|}^{2n+1}}
{dydt\over t} \int_0^{r_B }\!\!\int_{B}
|a(y,t)|^2{dydt\over t}\Big)^{1/2}dx\\
&\leq C\sqrt{\ln \delta}\int_{(4B)^c}\Big(\sum_{j} \int_0^{r_B }  \Big({\delta^{-j}\over t}\wedge {t\over \delta^{-j}}\Big)^2 { (\delta^{-j}\vee t)\over {|x-y_B|}^{2n+1}}
{dt\over t}  \Big)^{1/2}dx\\
&\leq C\sqrt{\ln \delta}\int_{(4B)^c} {1\over |x-y_B|^{n+{1 \over 2}}}dx \, \Big(\sum_{j} \int_0^{r_B }  \Big({\delta^{-j}\over t}\wedge {t\over \delta^{-j}}\Big)^{2}  (\delta^{-j}\vee t)
{dt\over t}  \Big)^{1/2},
\end{align*}
where the fourth inequality follows from property (ii) of the $T_2^1$ atom $a$, and we use $s\vee t$ to denote $\max\{s,t\}$ for every positive numbers $s$ and $t$.

 One can compute
\begin{align*}
\sum_{j }\int_0^{r_B }\Big({\delta^{-j}\over t}\wedge {t\over \delta^{-j}}\Big)^2  (\delta^{-j}\vee t){dt\over t}
&=\Big(\sum_{j:\ \delta^{-j}\leq r_B}+ \sum_{j:\ \delta^{-j}> r_B}\Big)\int_0^{r_B }  \Big({\delta^{-j}\over t}\wedge {t\over \delta^{-j}}\Big)^2  (\delta^{-j}\vee t)
{dt\over t}\\
&=:II_1+II_2.
\end{align*}
For the term $II_1$, we have
\begin{align*}
II_1&= \sum_{j:\ \delta^{-j}\leq r_B} \Big(\int_0^{\delta^{-j} }+ \int_{\delta^{-j}}^{r_B}\Big) \Big({\delta^{-j}\over t}\wedge {t\over \delta^{-j}}\Big)^2  (\delta^{-j}\vee t){dt\over t} \\
&\leq \sum_{j:\ \delta^{-j}\leq r_B} \int_0^{\delta^{-j} } \Big( {t\over \delta^{-j}}\Big)^2  \delta^{-j}{dt\over t}+\sum_{j:\ \delta^{-j}\leq r_B} \int_{\delta^{-j}}^{r_B} \Big( {\delta^{-j}\over t}\Big)^2  t {dt\over t}\\
&\leq C \sum_{j: \delta^{-j}\leq r_B} \delta^{-j}\leq C \frac{r_B}{\delta-1}.
\end{align*}
For the term $II_2$, we have
\begin{align*}
II_2&\leq \sum_{j:\ \delta^{-j}> r_B} \int_0^{r_B }  \Big({t\over \delta^{-j}}\Big)^2  \delta^{-j}
{dt\over t}  \leq C  \big(r_B\big)^2\sum_{j: \delta^{-j}> r_B} \delta^{j}\leq C \frac{r_B}{\delta-1}.
\end{align*}

Thus, we obtain that for any $1<\delta\leq 2$,
\begin{align*}
II&\leq  C \sqrt{\ln \delta} \frac{1}{\sqrt{\delta-1}}\int_{(4B)^c}\frac{r_B^{1 \over 2}}{{|x-y_B|}^{n+{1 \over 2}}}dx\leq   \tilde{C},
\end{align*}
where $\tilde{C}$ is independent of $\delta$.
 Combining the estimate of $I$ and $II$, we can obtain that  (\ref{g1 uni
bd on molecule m}) holds.

The proofs of \eqref{e1 lemma H1 to L1 molecule} for $i=2,3$
 are similar to that for $i=1$. The proof of \eqref{e1 lemma H1 to L1 molecule} for $i=4$ is similar to that for $i=1$, but via the almost orthogonality estimate \eqref{s-e2.1.1} in Proposition \ref{s-prop3.3}. We omit the details here.
\end{proof}

\subsection{Proof of frame decomposition on Hardy spaces}

The next two results give the estimates on the norms of the operators $T_{\delta}$ and $I - T_{\delta}$ on the Hardy spaces associated
with the operator $L$.

\begin{theorem}\label{prop of S bd on H1}
  For every  ${n\over n+1}<r<1$,  there exists a positive constant $C(n,r)$,  such that for every $1<\delta<2$ and $f\in H_L^1(\mathbb{R}^n)$,
\begin{eqnarray}
\|T_\delta(f)\|_{H_L^1(\mathbb{R}^n)}\leq
C(n,r) \delta^{ Mn({1\over r}-1) }  \|f\|_{H_L^1(\mathbb{R}^n)}.
\end{eqnarray}
\end{theorem}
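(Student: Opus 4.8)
The plan is to mimic the $L^p$ argument, replacing the duality step of Theorem \ref{th2.1} by the molecular decomposition of $H^1_L(\mathbb{R}^n)$ together with the square-function characterisation $\|g\|_{H^1_L(\mathbb{R}^n)}=\|S_L g\|_{L^1(\mathbb{R}^n)}$. The first move is to reduce to a single molecule. For $f\in H^1_L(\mathbb{R}^n)\cap L^2(\mathbb{R}^n)$ write $f=\sum_k\lambda_k m_k$ in $L^2(\mathbb{R}^n)$, with each $m_k$ an $L$-molecule as in \eqref{def molecule} and $\sum_k|\lambda_k|\le C\|f\|_{H^1_L(\mathbb{R}^n)}$. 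Since $T_\delta$ is bounded on $L^2(\mathbb{R}^n)$ by Theorem \ref{Thm Tdelta}, one has $T_\delta f=\sum_k\lambda_k T_\delta m_k$ in $L^2(\mathbb{R}^n)$, and then, exactly as in the reduction at the beginning of the proof of Lemma \ref{lemma H1 to L1 molecule} (sublinearity and $L^2$-continuity of $S_L$, passing to an a.e.-convergent subsequence), it suffices to prove
\[
\|S_L(T_\delta m)\|_{L^1(\mathbb{R}^n)}\le C(n,r)\,\delta^{Mn(1/r-1)}\qquad\text{for every }L\text{-molecule }m,
\]
with the constant independent of $\delta\in(1,2)$ and of $m$; the general $f\in H^1_L(\mathbb{R}^n)$ then follows by density.

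So fix an $L$-molecule $m$ associated with the ball $B=B(y_B,r_B)$ and split $\mathbb{R}^n=4B\cup(4B)^c$. On $4B$, H\"older's inequality, the $L^2$-boundedness of $S_L$ (a consequence of the quadratic estimate \eqref{L2 esti of square function}, cf. Lemma \ref{g fucntion Lp} with $p=2$), the $L^2$-boundedness of $T_\delta$, and the molecular bound $\|m\|_{L^2(\mathbb{R}^n)}\lesssim|B|^{-1/2}$ (Lemma 4.3 of \cite{DY2}) give $\int_{4B}S_L(T_\delta m)(x)\,dx\le|4B|^{1/2}\|S_L(T_\delta m)\|_{L^2(\mathbb{R}^n)}\lesssim|B|^{1/2}\|m\|_{L^2(\mathbb{R}^n)}\lesssim1$, which is stronger than needed and carries no $\delta$-dependence.

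The term over $(4B)^c$ is the main obstacle. Here one expands
\[
q_{t^2}(L)T_\delta m=\ln\delta\sum_{j}\sum_{\tau\in I_j}|Q_\tau^j|\,K_{q(t^2L)q(\delta^{-2j}L)}(\cdot,y_{Q_\tau^j})\,q_{\delta^{-2j}}(L)m(y_{Q_\tau^j}),
\]
controls $K_{q(t^2L)q(\delta^{-2j}L)}$ by the almost-orthogonality estimate \eqref{s-e2.1} (with $\alpha=1$), and inserts the molecular representation $m=\int_0^{r_B}\!\int_B s^2Le^{-s^2L}(a(\cdot,s))\,\frac{ds}{s}$, estimating $q_{\delta^{-2j}}(L)m(y_{Q_\tau^j})$ again through \eqref{s-e2.1} applied to $q(\delta^{-2j}L)\,s^2Le^{-s^2L}$ and property (ii) of the $T^1_2$-atom $a$. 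The geometric observation used in the proof of Lemma \ref{lemma H1 to L1 molecule}, that $\delta^{-j}+t+|y-y_{Q_\tau^j}|\gtrsim\delta^{-j}+t+|y_B-y|$ for the relevant $y\in(4B)^c$, yields after the area integration an off-diagonal factor $|x-y_B|^{-n-1/2}$, whose integral over $(4B)^c$ converges. What remains is a double sum over the scales $j$ and the cubes $Q_\tau^j$: the sum over $j$ (split according to whether $\delta^{-j}\le r_B$ or $\delta^{-j}>r_B$) is a convergent geometric series that absorbs the prefactor $\ln\delta$, while the sum over $\tau$ at each fixed scale — where the side-length $\delta^{-j-M}$ of $Q_\tau^j$ is much smaller than the natural width $\delta^{-j}$ of the bump $q_j(\cdot,y_{Q_\tau^j})$ — is the source of the factor $\delta^{Mn(1/r-1)}$, obtained by replacing the crude $\ell^1$-summation in $\tau$ by an $\ell^r$-summation with $n/(n+1)<r<1$. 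Collecting the two parts gives the claimed bound.

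The delicate point is the bookkeeping in this last step: every constant must be kept uniform in $\delta\in(1,2)$, so that the only surviving $\delta$-dependence is the advertised power $\delta^{Mn(1/r-1)}$. The restriction $n/(n+1)<r<1$, i.e.\ $n(1/r-1)<1$, is imposed precisely so that this loss is outweighed by the compensating gain $\delta^{-M}$ that appears in the subsequent estimate for the remainder operator $R_\delta=I-T_\delta$ on $H^1_L(\mathbb{R}^n)$.
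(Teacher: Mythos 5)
Your overall architecture is genuinely different from the paper's. The paper never reduces Theorem \ref{prop of S bd on H1} to a single molecule: it works with a general $f$, expands $q_{s^2}(L)T_\delta f$ via the almost-orthogonality estimate \eqref{s-e2.1}, and then invokes a Frazier--Jawerth type pointwise inequality (the claim \eqref{claim FJ}) to dominate the sum over $\tau$ by $\delta^{Mn(1/r-1)}\delta^{[j-(k\wedge j)]n(1/r-1)}\{\mathcal M(\sum_\tau|q_{\delta^{-2j}}(L)f(y_{Q_\tau^j})|^r\chi_{Q_\tau^j})\}^{1/r}$; after \eqref{e4.10-s} and the Fefferman--Stein vector-valued maximal inequality, everything collapses to $\|g_{1,\delta}(L)f\|_{L^1}$, and only at that point does the molecular decomposition enter, through Lemma \ref{lemma H1 to L1 molecule}. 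Your plan inverts this order: decompose into molecules first, then estimate $S_L(T_\delta m)$ directly. That inversion is legitimate in principle (the reduction via sublinearity and the a.e.-convergent subsequence is exactly as in Lemma \ref{lemma H1 to L1 molecule}, and the local part on $4B$ is fine), and if it worked it would arguably be conceptually cleaner, since it would need only one maximal-function-free kernel computation.

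The gap is in the part you describe in a single sentence: the estimate over $(4B)^c$ and, specifically, your claim that the sum over $\tau$ ``is the source of the factor $\delta^{Mn(1/r-1)}$, obtained by replacing the crude $\ell^1$-summation in $\tau$ by an $\ell^r$-summation.'' This is asserted, not proved, and it is not clear it is even the right mechanism in your setting. In the paper, the factor $\delta^{Mn(1/r-1)}$ arises from a precise computation in \eqref{claim FJ} comparing the cube volume $|Q_\tau^j|=\delta^{-(j+M)n}$ with the kernel scale $\delta^{-(j\wedge k)n}$, and the whole point of passing to the exponent $r<1$ there is to reassemble the sampled values $q_{\delta^{-2j}}(L)f(y_{Q_\tau^j})$ into the function $\sum_\tau|q_{\delta^{-2j}}(L)f(y_{Q_\tau^j})|^r\chi_{Q_\tau^j}$ so that a \emph{maximal function of $f$-dependent data} can absorb the sum; the restriction $r>n/(n+1)$ is already needed inside that claim (to sum $2^{-\ell(r-n(1-r))}$ over the annuli), not only later in Theorem \ref{theorem error term of H1}. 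In your molecule-by-molecule framework, by contrast, once you have bounded $|q_{\delta^{-2j}}(L)m(y_{Q_\tau^j})|$ by explicit kernels acting on the $T_2^1$-atom, the quantity $\sum_\tau|Q_\tau^j|K_1(y,y_{Q_\tau^j})K_2(y_{Q_\tau^j},z)$ is simply a Riemann sum of kernels that vary slowly on the scale $\delta^{-j-M}$, and the natural way to handle it is direct comparison with $\int K_1(y,u)K_2(u,z)\,du$ --- which produces a constant \emph{uniform} in $M$ and $\delta$, with no $\ell^r$ step and no factor $\delta^{Mn(1/r-1)}$ at all. So either you carry out that Riemann-sum comparison (in which case you should say so and note that you obtain a bound stronger than the statement), or you import the full machinery of \eqref{claim FJ} together with the vector-valued maximal inequality into the molecule estimate (in which case the computation producing exactly $\delta^{Mn(1/r-1)}$ must be written out). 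As it stands, the step that carries all of the $\delta$- and $M$-dependence --- the only content of the theorem beyond boundedness --- is not actually established.
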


\begin{proof}
For any $f\in H_L^1(\mathbb{R}^n)\cap L^2(\mathbb{R}^n)$,
\begin{align*}
&\|T_\delta(f)\|_{H_L^1(\mathbb{R}^n)}\\
&=\Big\|\Big\{ \int_{\Gamma(x)} \big| q_{s^2}(L)\big(T_\delta(f)\big)(y)\big|^2
{dyds\over s^{n+1}}  \Big\}^{1\over
2}\Big\|_{L^1(\mathbb{R}^n)}\\
&=\ln\delta\Big\|\Big\{ \int_0^\infty \!\int_{|x-y|<s} \Big|
q_{s^2}(L)\Big(\sum\limits_j\sum_{\tau\in
I_j}|Q_{\tau}^j|q_{j}(\cdot,y_{Q_{\tau}^j})q_{\delta^{-2j}}(L)(f)(y_{Q_{\tau}^j})\Big)(y)\Big|^2 {dyds\over s^{n+1}}
\Big\}^{1\over 2}\Big\|_{L^1(\mathbb{R}^n)}\\
&=\ln\delta\Big\|\Big\{ \sum_k  \int_{\delta^{-k}}^{\delta^{-k+1}} \int_{|x-y|<s}\Big| \sum\limits_j\sum_{\tau\in
I_j}|Q_{\tau}^j|{q_{s^2}(L)q_{\delta^{-2j}}(L)}(y,y_{Q_{\tau}^j})q_{\delta^{-2j}}(L)(f)(y_{Q_{\tau}^j})\Big|^2 {dyds\over s^{n+1}}
\Big\}^{1\over 2}\Big\|_{L^1(\mathbb{R}^n)}\\
&\leq\ln\delta\Big\|\Big\{\sum_k  \int_{\delta^{-k}}^{\delta^{-k+1}} \int_{|x-y|<s}\\
&\quad\quad\quad\Big| \sum\limits_j\sum_{\tau\in
I_j}|Q_{\tau}^j|\delta^{-|k-j|}\frac{\delta^{-(j\wedge
k)}}{(\delta^{-(j\wedge
k)}+|y-y_{Q_{\tau}^j}|)^{n+1}}\big|q_{\delta^{-2j}}(L)(f)(y_{Q_{\tau}^j})\big|\Big|^2{dyds\over s^{n+1}}
\Big\}^{1\over 2}\Big\|_{L^1(\mathbb{R}^n)}\\
&\leq(\ln\delta)^{3\over 2}\Big\|\Big\{\sum_k  \Big| \sum\limits_j\sum_{\tau\in
I_j}|Q_{\tau}^j|\delta^{-|k-j|}\frac{\delta^{-(j\wedge
k)}}{(\delta^{-(j\wedge
k)}+|x-y_{Q_{\tau}^j}|)^{n+1}}\big|q_{\delta^{-2j}}(L)(f)(y_{Q_{\tau}^j})\big|\Big|^2
\Big\}^{1\over 2}\Big\|_{L^1(\mathbb{R}^n)}.
 \end{align*}

Next we claim that
\begin{eqnarray}\label{claim FJ}
\lefteqn{\sum_{\tau\in
I_j}|Q_{\tau}^j|\frac{\delta^{-(j\wedge
k)}}{(\delta^{-(j\wedge
k)}+|x-y_{Q_{\tau}^j}|)^{n+1}}\big|q_{\delta^{-2j}}(L)(f)(y_{Q_{\tau}^j})\big|}\\
 &\leq&
C  \delta^{ Mn({1\over r}-1) }\delta^{[j-(k\wedge j)]n({1\over r}-1)}\Big\{ \mathcal{M}\Big(
\sum_{\tau\in
I_j}\big|q_{\delta^{-2j}}(L)(f)(y_{Q_{\tau}^j})\big|^r\chi_{Q_{\tau}^j}(\cdot)
\Big) \Big\}^{1/r}(x),\nonumber
\end{eqnarray}
where $\mathcal{M}$ is the Hardy--Littlewood maximal function and
$ {n\over n+1}<r<1$.

We now prove this claim (\ref{claim FJ}).  We point out that this type of inequality is first proved by
Frazier and Jawerth in the Euclidean setting (See \cite{FJ}, pp.147--148).

To prove (\ref{claim FJ}), we first point out that for all $0<r<1$, $ \sum_j |a_j|\leq  \big( \sum_j |a_j|^r \big)^{1/r}$.
As a consequence, the left-hand side of the inequality (\ref{claim FJ}) is controlled by
\begin{eqnarray*}
&&\hskip-1cm\Big(\sum_{\tau\in
I_j}|Q_{\tau}^j|^{r}\frac{\delta^{-(j\wedge
k)r}}{(\delta^{-(j\wedge
k)}+|x-y_{Q_{\tau}^j}|)^{(n+1)r}}\big|q_{\delta^{-2j}}(L)(f)(y_{Q_{\tau}^j})\big|^r \Big)^{1\over r}\\
 &\leq& \Big(\sum_{\tau\in
A_0}|Q_{\tau}^j|^r\frac{\delta^{-(j\wedge
k)r}}{(\delta^{-(j\wedge
k)}+|x-y_{Q_{\tau}^j}|)^{(n+1)r}}\big|q_{\delta^{-2j}}(L)(f)(y_{Q_{\tau}^j})\big|^r \\
&&+ \sum_{\ell\geq1}\sum_{\tau\in
A_\ell}|Q_{\tau}^j|^r\frac{\delta^{-(j\wedge
k)r}}{(\delta^{-(j\wedge
k)}+|x-y_{Q_{\tau}^j}|)^{(n+1)r}}\big|q_{\delta^{-2j}}(L)(f)(y_{Q_{\tau}^j})\big|^r \Big)^{1\over r},
\end{eqnarray*}
where
\begin{eqnarray*}
 A_0&:=&\lbrace  \tau \in I_j:  |x-y_{Q_{\tau}^j}|  \leq \delta^{-(j\wedge k)}   \rbrace;\\
 A_\ell&:=&\lbrace  \tau \in I_j:     2^{\ell-1} \delta^{-(j\wedge k)}  < |x-y_{Q_{\tau}^j}| \leq  2^\ell\delta^{-(j\wedge k)}   \rbrace, \ \ \ell\geq 1,
 \end{eqnarray*}

Then the left-hand side of (\ref{claim FJ}) is bounded by
\begin{eqnarray*}
&&\hskip-.7cm\Big(\sum_{\tau\in
A_0}{|Q_{\tau}^j|^r\over \delta^{-(j\wedge
k)nr} }\big|q_{\delta^{-2j}}(L)(f)(y_{Q_{\tau}^j})\big|^r \\
&&+ \sum_{\ell\geq1}\sum_{\tau\in
A_\ell} { |Q_{\tau}^j|^r  \over  2^{(\ell-1) n r}\delta^{-(j\wedge
k)nr }  }  \frac{\delta^{-(j\wedge
k)r}}{(\delta^{-(j\wedge
k)}+ 2^{\ell-1} \delta^{-(j\wedge
k)}  )^{r}}\big|q_{\delta^{-2j}}(L)(f)(y_{Q_{\tau}^j})\big|^r \Big)^{1\over r}\\
&& \leq \Big(\sum_{\tau\in
A_0} {\delta^{-(j\wedge
k)n(1-r)}  \over |Q_{\tau}^j|^{1-r}  }  {|Q_{\tau}^j|\over \delta^{-(j\wedge
k)n} }  \big|q_{\delta^{-2j}}(L)(f)(y_{Q_{\tau}^j})\big|^r \\
&&\hskip.5cm+ \sum_{\ell\geq1} 2^{-\ell r}\sum_{\tau\in
A_\ell} {  2^{(\ell-1) n (1-r)}\delta^{-(j\wedge
k)n(1-r) }  \over |Q_{\tau}^j|^{1-r}   }   { |Q_{\tau}^j|  \over  2^{(\ell-1) n }\delta^{-(j\wedge
k)n }  } \big|q_{\delta^{-2j}}(L)(f)(y_{Q_{\tau}^j})\big|^r \Big)^{1\over r}\\
&& \leq C\delta^{ [j-(j\wedge k)]n({1\over r}-1) }\delta^{ Mn({1\over r}-1) } \Big(  {1\over |B(x,\delta^{-(j\wedge
k)})|} \int_{B(x,\delta^{-(j\wedge
k)})} \sum_{\tau\in
A_0}     \big|q_{\delta^{-2j}}(L)(f)(y_{Q_{\tau}^j})\big|^r  \chi_{Q_{\tau}^j}(y)dy \\
&&\hskip.5cm+ \sum_{\ell\geq1} 2^{-\ell (r-n (1-r))}  {1\over |B(x,2^\ell\delta^{-(j\wedge
k)})|} \int_{B(x,2^\ell\delta^{-(j\wedge
k)})}  \sum_{\tau\in
A_\ell}  \big|q_{\delta^{-2j}}(L)(f)(y_{Q_{\tau}^j})\big|^r  \chi_{Q_{\tau}^j}(y)dy \Big)^{1\over r}\\
&&\leq C \delta^{ Mn({1\over r}-1) } \delta^{[j-(k\wedge j)]n({1\over r}-1)}\Big\{ \mathcal{M}\Big(
\sum_{\tau\in
I_j}\big|q_{\delta^{-2j}}(L)(f)(y_{Q_{\tau}^j})\big|^r\chi_{Q_{\tau}^j}(\cdot)
\Big) \Big\}^{1/r}(x),
\end{eqnarray*}
where in the last inequality we use the fact that ${n\over n+1}<r$. This implies that the claim (\ref{claim FJ}) holds.

As a consequence, we can obtain that
\begin{align*}
\|T_\delta(f)\|_{H_L^1(\mathbb{R}^n)}
&\leq C\delta^{ Mn({1\over r}-1) }(\ln\delta)^{3\over 2}\Big\|\Big\{\sum_k \Big|
\sum\limits_j\delta^{-|k-j|}\delta^{[j-(k\wedge j)]n({1\over r}-1)}\\
&\quad\times\Big\{ \mathcal{M}\Big( \sum_{\tau\in
I_j}\big|q_{\delta^{-2j}}(L)(f)(y_{Q_{\tau}^j})\big|^r\chi_{Q_{\tau}^j}(\cdot) \Big)
\Big\}^{1/r}(x)\Big|^2 \Big\}^{1\over
2}\Big\|_{L^1(\mathbb{R}^n)}\\
&\leq C\delta^{Mn({1\over r}-1) }(\ln\delta)^{3\over 2}\Big\|\Big\{\sum_k
\Big(\sum\limits_j\delta^{-|k-j|}\delta^{[j-(k\wedge j)]n({1\over r}-1)}\Big)\\
&\quad\times\Big(\sum\limits_j\delta^{-|k-j|}\delta^{[j-(k\wedge j)]n({1\over r}-1)}\Big\{ \mathcal{M}\Big( \sum_{\tau\in
I_j}\big|q_{\delta^{-2j}}(L)(f)(y_{Q_{\tau}^j})\big|^r\chi_{Q_{\tau}^j}(\cdot) \Big)
\Big\}^{2/r}(x)\Big) \Big\}^{1\over
2}\Big\|_{L^1(\mathbb{R}^n)}
\end{align*}

Observe that
\begin{align}\label{e4.10-s}
\sum\limits_j\delta^{-|k-j|}\delta^{[j-(k\wedge j)]n({1\over r}-1)}+\sum\limits_k\delta^{-|k-j|}\delta^{[j-(k\wedge j)]n({1\over r}-1)}\leq \frac{C(n,r)}{\delta-1}.
\end{align}
It follows that
\begin{align*}
\|T_\delta(f)\|_{H_L^1(\mathbb{R}^n)}&\leq C\delta^{ Mn({1\over r}-1) }(\ln\delta)^{3\over 2}(\delta-1)^{-{1\over 2}}\Big\|\Big\{\sum_k
\sum\limits_j\delta^{-|k-j|}\delta^{[j-(k\wedge j)]n({1\over r}-1)}\\
&\quad\times\Big\{ \mathcal{M}\Big( \sum_{\tau\in
I_j}\big|q_{\delta^{-2j}}(L)(f)(y_{Q_{\tau}^j})\big|^r\chi_{Q_{\tau}^j}(\cdot) \Big)
\Big\}^{2/r}(x) \Big\}^{1\over 2}\Big\|_{L^1(\mathbb{R}^n)}\\
&\leq C\delta^{ Mn({1\over r}-1) }\frac{(\ln\delta)^{3\over 2}}{(\delta-1)}\Big\|\Big\{ \sum_j \Big\{
\mathcal{M}\Big( \sum_{\tau\in
I_j}\big|q_{\delta^{-2j}}(L)(f)(y_{Q_{\tau}^j})\big|^r\chi_{Q_{\tau}^j}(\cdot) \Big)
\Big\}^{2/r}(x) \Big\}^{1\over 2}\Big\|_{L^1(\mathbb{R}^n)}\\
&\leq C\delta^{ Mn({1\over r}-1) }(\ln\delta)^{1\over2}\Big\|\Big\{ \sum_j \sum_{\tau\in
I_j}\big|q_{\delta^{-2j}}(L)(f)(y_{Q_{\tau}^j})\big|^2\chi_{Q_{\tau}^j}(x)
\Big\}^{1\over 2}\Big\|_{L^1(\mathbb{R}^n)}\\
&=C\delta^{ Mn({1\over r}-1) }\Big\|g_{1,\delta}(f)\Big\|_{L^1(\mathbb{R}^n)},
\end{align*}
where the last inequality follows from boundedness of the
vector-valued maximal function.

Now applying (\ref{e1 lemma H1 to L1 molecule}) in Lemma \ref{lemma H1 to L1 molecule}, we  obtain that
\begin{eqnarray*}
\|T_\delta(f)\|_{H_L^1(\mathbb{R}^n)}\leq
C\delta^{ Mn({1\over r}-1) }\|f\|_{H_L^1(\mathbb{R}^n)}.
\end{eqnarray*}
The proof of this theorem is complete.
\end{proof}

\smallskip

\begin{theorem}\label{theorem error term of H1}
Define $R_\delta(f)(x)=f(x)-T_\delta(f)(x)$. Then there exists positive
constants $1<\delta<2$ and $M\geq1$ such that $\|R_\delta(f)\|_{H_L^1(\mathbb{R}^n)}\leq
\frac{1}{2}\|f\|_{H_L^1(\mathbb{R}^n)}$.
\end{theorem}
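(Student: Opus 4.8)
The plan is to mimic the proof of Theorem~\ref{th2.2}, but to measure everything through the square-function norm $\|g\|_{H_L^1(\RN)}=\|S_Lg\|_{L^1(\RN)}$ instead of by duality with $L^{p'}$, and to replace the $L^{p'}$ vector-valued maximal estimates by Lemma~\ref{lemma H1 to L1 molecule} together with the Frazier--Jawerth type discrete maximal inequality \eqref{claim FJ} already used in the proof of Theorem~\ref{prop of S bd on H1}. It suffices to prove the estimate for $f\in H_L^1(\RN)\cap L^2(\RN)$ and then extend it by density. Starting from the Calder\'on reproducing formula \eqref{e3.1} (valid in $L^2(\RN)$ by the $H_\infty$-calculus), discretising the $t$-integral over the intervals $[\delta^{-j},\delta^{-j+1})$ and the space over the cubes $Q_\tau^j$, and subtracting $T_\delta f$, one obtains the same four-term splitting as in Theorem~\ref{th2.2}, $R_\delta(f)=\sum_{i=1}^4 R_{\delta,i}(f)$, where $R_{\delta,1}$ carries the kernel error $q_{t^2}(\cdot,y)-q_{t^2}(\cdot,y_{Q_\tau^j})$, $R_{\delta,2}$ the error $q_{t^2}(\cdot,y_{Q_\tau^j})-q_j(\cdot,y_{Q_\tau^j})$, $R_{\delta,3}$ the error $q_{t^2}(L)f(y)-q_{t^2}(L)f(y_{Q_\tau^j})$, and $R_{\delta,4}$ the error $q_{t^2}(L)f(y_{Q_\tau^j})-q_{\delta^{-2j}}(L)f(y_{Q_\tau^j})$; as in the proof of Theorem~\ref{Thm Tdelta} each of these series converges in $L^2(\RN)$.

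Next, for each $i$ I would bound $\|R_{\delta,i}(f)\|_{H_L^1(\RN)}=\|S_L(R_{\delta,i}f)\|_{L^1(\RN)}$ by inserting $q_{s^2}(L)$ inside $S_L$. The composed kernels $K_{q_{s^2}(L)q_{t^2}(L)}$ and $K_{q_{s^2}(L)q_{\delta^{-2j}}(L)}$ are controlled by the almost orthogonality estimates \eqref{s-e2.1} and \eqref{s-e2.1.1} of Proposition~\ref{s-prop3.3}, which supply the geometric decay $\delta^{-|k-j|}$ between the scales $\delta^{-k}$ (of $s$) and $\delta^{-j}$, while the kernel errors produce an honest smallness factor: $\delta^{-M}$ for $R_{\delta,1}$ and $R_{\delta,3}$ (from the H\"older regularity \eqref{s-e2.2} together with $|y-y_{Q_\tau^j}|\le C\delta^{-j-M}$), and $\sqrt{\delta-1}\asymp\sqrt{\ln\delta}$ for $R_{\delta,2}$ and $R_{\delta,4}$ (from \eqref{s-e2.3}). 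For the two pieces $R_{\delta,3}$, $R_{\delta,4}$, whose output kernel $q_{\delta^{-2j}}(L)(\cdot,y_{Q_\tau^j})$ is sampled at the cube centres, I would then pass from these point samples to the Hardy--Littlewood maximal function of $\sum_{\tau}\big|q_{\delta^{-2j}}(L)f(y_{Q_\tau^j})\big|^r\chi_{Q_\tau^j}$ for a fixed $\frac{n}{n+1}<r<1$, via the argument behind \eqref{claim FJ}; this step costs a growth factor $\delta^{Mn(1/r-1)}$. The vector-valued maximal theorem and the geometric summations \eqref{e4.10-s} and \eqref{se3.2} then reduce all four contributions to the discrete $g$-functions of $f$, and Lemma~\ref{lemma H1 to L1 molecule} yields $\|g_{i,\delta}(L)f\|_{L^1(\RN)}\le C\|f\|_{H_L^1(\RN)}$ with $C$ independent of $\delta$. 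Altogether this gives
\[
\|R_\delta(f)\|_{H_L^1(\RN)}\le C\Big(\delta^{-M}+\sqrt{\delta-1}+\delta^{Mn(1/r-1)}\big(\delta^{-M}+\sqrt{\delta-1}\big)\Big)\|f\|_{H_L^1(\RN)}.
\]

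To finish, I would exploit that the admissible range $\frac{n}{n+1}<r<1$ permits fixing $r$ so close to $1$ that $n(1/r-1)<1$; then all four exponents appearing above are such that a coordinated choice of parameters succeeds. Concretely, with $\delta=1+\varepsilon$ and $M$ of the order $\varepsilon^{-1}\log(1/\varepsilon)$, each of the four summands in the bracket is $O(\varepsilon^{c})$ for some $c=c(n,r)>0$ as $\varepsilon\to0^+$, so one may choose $\varepsilon$ small (hence $1<\delta<2$) and then the corresponding integer $M\ge1$ so that the bracket is strictly smaller than $\frac12$. This proves $\|R_\delta(f)\|_{H_L^1(\RN)}\le\frac12\|f\|_{H_L^1(\RN)}$ on the dense subspace, and hence on all of $H_L^1(\RN)$.

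The step I expect to be the main obstacle is the estimate of $R_{\delta,3}$ and $R_{\delta,4}$: the Frazier--Jawerth discretisation inevitably inserts the growth factor $\delta^{Mn(1/r-1)}$, so $M$ and $\delta$ cannot be chosen independently the way they are in the $L^p$ proof, and one must carefully balance this factor against the smallness $\delta^{-M}$ and $\sqrt{\delta-1}$ --- which is possible precisely because $r$ can be taken close to $1$. A secondary technical nuisance is that, without the $L^{p'}$-duality available in the $L^p$ setting, the H\"older regularity must be extracted directly from the composed kernels $K_{q_{s^2}(L)q_{t^2}(L)}$, e.g.\ by factoring $q=\zeta\,\varphi$ and applying Proposition~\ref{propHeat2} to one $\zeta$-factor while retaining the almost orthogonality gain from the remaining factors.
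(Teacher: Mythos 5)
Your proposal is correct and follows essentially the same route as the paper's proof: the same four-term splitting of $R_\delta$, the same insertion of $q_{s^2}(L)$ into the area function combined with the almost-orthogonality estimates of Proposition \ref{s-prop3.3}, the same factorization $q=\zeta\varphi$ to extract the H\"older gain $\delta^{-M}$ for $R_{\delta,1}$, $R_{\delta,3}$ and the scale-difference gain for $R_{\delta,2}$, $R_{\delta,4}$, the same use of the Frazier--Jawerth inequality \eqref{claim FJ} and Lemma \ref{lemma H1 to L1 molecule}, and the same coordinated choice of $r$ close to $1$, then $\delta$ close to $1$, then $M$, to beat the growth factor $\delta^{Mn(1/r-1)}$. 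You correctly identified the one genuinely delicate point (the interdependence of $\delta$ and $M$ forced by the factor $\delta^{Mn(1/r-1)}$), which is exactly where the paper's proof also concentrates its final parameter-balancing argument.
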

\begin{proof}
We decompose $R_\delta(f)$ into four terms:
$R(f)=R_{\delta,1}(f)+R_{\delta,2}(f)+R_{\delta,3}(f)+R_{\delta,4}(f)$, where
\begin{eqnarray*}
R_{\delta,1}(f)&:=&\sum_j\sum_{\tau\in I_j}\int_{\delta^{-j}}^{\delta^{-j+1}}
\int_{Q_{\tau}^j}[q_{t^2}(x,y)-q_{t^2}(x,y_{Q_{\tau}^j})]q_{t^2}(L)f(y)dy\frac{dt}{t};\\
R_{\delta,2}(f)&:=&\sum\limits_j\sum_{\tau\in
I_j}\int_{\delta^{-j}}^{\delta^{-j+1}}
\int_{Q_{\tau}^j}[q_{t^2}(x,y_{Q_{\tau}^j})-q_{j}(x,y_{Q_{\tau}^j})]q_{t^2}(L)f(y)dy\frac{dt}{t};\\
R_{\delta,3}(f)&:=&\sum\limits_j\sum_{\tau\in
I_j}\int_{\delta^{-j}}^{\delta^{-j+1}}
\int_{Q_{\tau}^j}q_{j}(x,y_{Q_{\tau}^j})[q_{t^2}(L)f(y)-q_{t^2}(L)f(y_{Q_{\tau}^j})]dy\frac{dt}{t};\\
R_{\delta,4}(f)&:=&\sum\limits_j\sum_{\tau\in
I_j}\int_{\delta^{-j}}^{\delta^{-j+1}}
\int_{Q_{\tau}^j}q_{j}(x,y_{Q_{\tau}^j})[q_{t^2}(L)f(y_{Q_{\tau}^j})-q_{\delta^{-2j}}(L)f(y_{Q_{\tau}^j})]dy\frac{dt}{t}.
\end{eqnarray*}
We first estimate $R_{\delta,1}(f)$. For every $f\in H_L^1(\mathbb{R}^n)\cap L^2(\RN)$, from the definition of $R_{\delta,1}(f)$, we
have
\begin{align}\label{R1 f}
&\|R_{\delta,1}(f)\|_{H_L^1(\mathbb{R}^n)}\\
&=\Big\|\Big\{ \int_{\Gamma(x)} \big| q_{s^2}(L)\big(R_{\delta,1}(f)\big)(y)\big|^2 {dyds\over s^{n+1}}
\Big\}^{1\over
2}\Big\|_{L^1(\mathbb{R}^n)} \nonumber\\
&= \Big\|\Big\{\sum_k \int_{\delta^{-k}}^{\delta^{-k+1}} \int_{|x-y|<s} \Big| \sum_j\sum_{\tau\in
I_j}\nonumber\\
& \hskip1cm\int_{\delta^{-j}}^{\delta^{-j+1}}
\int_{Q_{\tau}^j}[q_{s^2}(L)q_{t^2}(L)(y,z)-q_{s^2}(L)q_{t^2}(L)(y,y_{Q_{\tau}^j})]q_{t^2}(L)f(z)dz\frac{dt}{t}\Big|^2{dyds\over s^{n+1}}
\Big\}^{1\over
2}\Big\|_{L^1(\mathbb{R}^n)}\nonumber
\end{align}
Note that for $s\in (\delta^{-k} ,  \delta^{-k+1})$ and $t\in (\delta^{-j} ,  \delta^{-j+1})$, we have
\begin{eqnarray*}
&&|q_{s^2}(L)q_{t^2}(L)(y,z)-q_{s^2}(L)q_{t^2}(L)(y,y_{Q_{\tau}^j})|\\
&&\hskip.5cm=\Big|\int_{\mathbb{R}^n}\int_{\mathbb{R}^n}
q_{s^2}(y,w)\varphi_{t^2}(w,v)\big[\zeta_{t^2}(v,z)-\zeta_{t^2}(v,y_{Q_{\tau}^j})\big] dwdv
\Big|\\
&&\hskip.5cm\leq \int_{\mathbb{R}^n}
|q_{s^2}(L)\varphi_{t^2}(L)(y,v)|\big|\zeta_{t^2}(v,z)-\zeta_{t^2}(v,y_{Q_{\tau}^j})\big| dv\\
&&\hskip.5cm\leq C \delta^{-|j-k|}\frac{|z-y_{Q_{\tau}^j}|}{\delta^{-j}}\int_{\RN}\frac{\delta^{-(j\wedge k)}}{(\delta^{-(j\wedge k)}+|y-v|)^{n+1}}\frac{\delta^{-j}}{(\delta^{-j}+|v-y_{Q_{\tau}^j}|)^{n+1}}\,dv\\
&&\hskip.5cm\leq C \delta^{-|j-k|}\frac{|z-y_{Q_{\tau}^j}|}{\delta^{-j}}\frac{\delta^{-(j\wedge k)}}{(\delta^{-(j\wedge k)}+|y-y_{Q_{\tau}^j}|)^{n+1}},
\end{eqnarray*}
where in the second inequality above we have used the similar argument of (\ref{s-e2.1}) and (\ref{s-e2.2}).
Thus, by substituting the above estimate into the right-hand side of \eqref{R1 f}, we get
\begin{align*}
\|R_{\delta,1}(f)\|_{H_L^1(\mathbb{R}^n)}
 &\leq C\Big\|\Big\{\sum_k \int_{\delta^{-k}}^{\delta^{-k+1}} \Big|
\sum_j\sum_{\tau\in I_j}\int_{\delta^{-j}}^{\delta^{-j+1}}
\int_{Q_{\tau}^j} \delta^{-|j-k|}\frac{|z-y_{Q_{\tau}^j}|}{\delta^{-j}}\\
&\quad\times\frac{\delta^{-(j\wedge k)}}{(\delta^{-(j\wedge k)}+|x-y_{Q_{\tau}^j}|)^{n+1}}
|q_{t^2}(L)f(z)|dz\frac{dt}{t}\Big|^2 {ds\over s}\Big\}^{1\over
2}\Big\|_{L^1(\mathbb{R}^n)}\\
&\leq C\delta^{-M}\sqrt{\ln\delta}\Big\|\Big\{\sum_k \Big|
\sum_j\delta^{-|k-j|}\sum_{\tau\in I_j} |Q_{\tau}^j|
\frac{\delta^{-(j\wedge k)}}{(\delta^{-(j\wedge k)}+|x-y_{Q_{\tau}^j}|)^{n+1}}
\\
&\quad \times \int_{\delta^{-j}}^{\delta^{-j+1}} {1\over |Q_{\tau}^j|} \int_{Q_{\tau}^j}|q_{t^2}(L)f(z)|dz\frac{dt}{t}
\Big|^2 \Big\}^{1\over
2}\Big\|_{L^1(\mathbb{R}^n)}\\
&\leq C \delta^{-M}\sqrt{\ln\delta}\Big\|\Big\{\sum_k \Big|
\sum_j\delta^{-|k-j|}\delta^{ Mn({1\over r}-1) } \delta^{[j-(k\wedge j)]n({1\over r}-1)}\\
&\quad\times \Big [\mathcal{M}\Big(\sum_{\tau\in I_j}
\Big|\int_{\delta^{-j}}^{\delta^{-j+1}}{1\over |Q_{\tau}^j|}\int_{Q_{\tau}^j}|q_{t^2}(L)f(z)|dz\frac{dt}{t}\Big|^r
\ \chi_{Q_{\tau}^j}(\cdot)  \Big)(x)\Big ]^{1/r} \Big|^2
\Big\}^{1\over
2}\Big\|_{L^1(\mathbb{R}^n)},
\end{align*}
where the last inequality follows from the claim (\ref{claim FJ}) and
$r$ can be any number in $ ({n\over n+1},1)$. Then using H\"older's inequality, we have
$$\Big| \int_{\delta^{-j}}^{\delta^{-j+1}}{1\over |Q_{\tau}^j|}\int_{Q_{\tau}^j}|q_{t^2}(L)f(z)|dz\frac{dt}{t}\Big|\leq
(\ln\delta)^{1\over2}\Big(\int_{\delta^{-j}}^{\delta^{-j+1}}{1\over |Q_{\tau}^j|}\int_{Q_{\tau}^j}|q_{t^2}(L)f(z)|^2dz\frac{dt}{t}\Big)^{1\over2}.
$$
Thus,
\begin{align*}
\|R_{\delta,1}(f)\|_{H_L^1(\mathbb{R}^n)}
&\leq C \delta^{-M}\delta^{ Mn({1\over r}-1) }(\ln\delta)\Big\|\Big\{\sum_k \Big|
\sum_j\delta^{-|k-j|}\delta^{[j-(k\wedge j)]n({1\over r}-1)}\\
&\quad\times \Big [\mathcal{M}\Big(\sum_{\tau\in I_j}
\Big|\int_{\delta^{-j}}^{\delta^{-j+1}}{1\over
|Q_{\tau}^j|}\int_{Q_{\tau}^j}|q_{t^2}(L)f(z)|^2dz\frac{dt}{t}\Big|^{r\over 2} \
\chi_{Q_{\tau}^j}(\cdot) \Big)(x)\Big ]^{1/r} \Big|^2
\Big\}^{1\over
2}\Big\|_{L^1(\mathbb{R}^n)}\\
&\leq C \delta^{-M}\delta^{ Mn({1\over r}-1) }(\ln\delta) \frac{1}{\delta-1}\\
&\quad\times\Big\|\Big \{\sum_j
\Big [\mathcal{M}\Big(\sum_{\tau\in I_j}
\Big|\int_{\delta^{-j}}^{\delta^{-j+1}}{1\over
|Q_{\tau}^j|}\int_{Q_{\tau}^j}|q_{t^2}(L)f(z)|^2dz\frac{dt}{t}\Big|^{r\over 2} \
\chi_{Q_{\tau}^j}(\cdot) \Big)(x)\Big ]^{2/r}  \Big\}^{1\over
2}\Big\|_{L^1(\mathbb{R}^n)}\\
&\leq C \delta^{-M}\delta^{ Mn({1\over r}-1) }\Big\|\Big\{ \sum_j \sum_{\tau\in I_j}
\int_{\delta^{-j}}^{\delta^{-j+1}}{1\over
|Q_{\tau}^j|}\int_{Q_{\tau}^j}|q_{t^2}(L)f(z)|^2dz\frac{dt}{t} \
\chi_{Q_{\tau}^j}(x)
  \Big\}^{1\over
2}\Big\|_{L^1(\mathbb{R}^n)},
\end{align*}
where in the second inequality above we have used (\ref{e4.10-s}) and in the last inequality we have used the boundedness of the vector-valued maximal function.

Now applying  Lemma \ref{lemma H1 to L1 molecule}, we can obtain that
\begin{eqnarray*}
\|R_{\delta,1}(f)\|_{H_L^1(\mathbb{R}^n)}
\leq C\delta^{-M}\delta^{ Mn({1\over r}-1) } \|f\|_{H_L^1(\mathbb{R}^n)}.
\end{eqnarray*}

Next we only need to estimate $R_{\delta,4}(f)$ since the terms $R_{\delta,2}(f)$ and
$R_{\delta,3}(f)$ can be obtained by following similar steps as in $R_{\delta,1}(f)$
and $R_{\delta,4}(f)$, respectively.

For any $f\in H_L^1(\mathbb{R}^n)$, we have
\begin{align*}
\|R_{\delta,4}(f)\|_{H_L^1(\mathbb{R}^n)}
&=\Big\|\Big\{\int_{\Gamma(x)} \big| q_{s^2}\big(R_{\delta,4}(f)\big)(y)\big|^2 {dyds\over s^{n+1}}
\Big\}^{1\over
2}\Big\|_{L^1(\mathbb{R}^n)}\\
&=\Big\|\Big\{\sum_k \int_{\delta^{-k}}^{\delta^{-k+1}} \int_{|x-y|<s} \\
&\quad\quad\Big| \sum\limits_j\sum_{\tau\in
I_j}\int_{\delta^{-j}}^{\delta^{-j+1}}
\int_{Q_{\tau}^j}q_{s^2}(L)q_{\delta^{-2j}}(L)(y,y_{Q_{\tau}^j})[q_{t^2}f(y_{Q_{\tau}^j})-q_{j}f(y_{Q_{\tau}^j})]dz\frac{dt}{t}
\Big|^2 {dyds\over s^{n+1}}\Big\}^{1\over 2}\Big\|_{L^1(\mathbb{R}^n)}.
\end{align*}
 Applying \eqref{e2.5} we obtain that
\begin{align*}
\|R_{\delta,4}(f)\|_{H_L^1(\mathbb{R}^n)}
&\leq  C\ln \delta\Big\|\Big\{\sum_k \Big|
\sum\limits_j\sum_{\tau\in I_j}\int_{\delta^{-j}}^{\delta^{-j+1}}
\int_{Q_{\tau}^j}
\delta^{-|j-k|}\\
&\hskip.7cm\times\frac{\delta^{-(j\wedge k)}}{(\delta^{-(j\wedge k)}+|x-y_{Q_{\tau}^j}|)^{n+1}}
\Big(\int_{\delta^{-j}}^{ \delta^{-j+1} } |  r^2Lq^{(1)}(r^2L)f(y_{Q_{\tau}^j})|^2\frac{dr}{r}\Big)^{1/2}
dy\frac{dt}{t} \Big|^2 \Big\}^{1\over
2}\Big\|_{L^1(\mathbb{R}^n)}\\
 &\leq  C(\ln\delta)^{2}\Big\|\Big\{\sum_k \Big|
\sum\limits_j\delta^{-|j-k|} \sum_{\tau\in I_j} |Q_{\tau}^j|
\\
&\hskip.7cm\times\frac{\delta^{-(j\wedge k)}}{(\delta^{-(j\wedge k)}+|x-y_{Q_{\tau}^j}|)^{n+1}}
\Big(\int_{\delta^{-j}}^{\delta^{-j+1}}|r^2Lq^{(1)}(r^2L)f(y_{Q_{\tau}^j})|^2\frac{dr}{r}\Big)^{1/2}\Big|^2 \Big\}^{1\over 2}\Big\|_{L^1(\mathbb{R}^n)}\\
&\leq  C(\ln\delta)^{2}\Big\|\Big\{\sum_k \Big|
\sum\limits_j
\delta^{-|j-k|} \delta^{ Mn({1\over r}-1) } \delta^{[j-(k\wedge j)]n({1\over r}-1)} \\
&\hskip.7cm\times \Big\{ \mathcal{M} \Big( \sum_{\tau\in I_j}
\Big(\int_{\delta^{-j}}^{\delta^{-j+1}}|r^2Lq^{(1)}(r^2L)f(y_{Q_{\tau}^j})|^2\frac{dr}{r}\Big)^{r/2}
\chi_{Q_{\tau}^j}(\cdot)\Big)(x)\Big\}^{1/r}
\Big|^2 \Big\}^{1\over 2}\Big\|_{L^1(\mathbb{R}^n)}\\
&\leq  C (\ln\delta)^{2}\delta^{ Mn({1\over r}-1) } \Big\|\Big\{\sum_k
\sum\limits_j
\delta^{-|j-k|} \delta^{[j-(k\wedge j)]n({1\over r}-1)}\\
&\hskip.7cm\times \Big\{ \mathcal{M} \Big( \sum_{\tau\in I_j}
\Big(\int_{\delta^{-j}}^{\delta^{-j+1}}|r^2Lq^{(1)}(r^2L)f(y_{Q_{\tau}^j})|^2\frac{dr}{r}\Big)^{r/2}
\chi_{Q_{\tau}^j}(\cdot)\Big)(x)\Big\}^{2/r}
 \Big\}^{1\over 2}\Big\|_{L^1(\mathbb{R}^n)}\\
&\leq  C (\ln\delta) \delta^{ Mn({1\over r}-1) }\Big\|\Big\{ \sum_j \Big\{ \mathcal{M} \Big(
\sum_{\tau\in I_j}
\Big(\int_{\delta^{-j}}^{\delta^{-j+1}}|r^2Lq^{(1)}(r^2L)f(y_{Q_{\tau}^j})|^2\frac{dr}{r}\Big)^{r/2}
\chi_{Q_{\tau}^j}(\cdot)\Big)\Big\}^{2/r}(x)
 \Big\}^{1\over 2}\Big\|_{L^1(\mathbb{R}^n)}\\
&\leq  C (\ln\delta)\delta^{ Mn({1\over r}-1) } \Big\|\Big\{ \sum_j
\sum_{\tau\in I_j}
\int_{\delta^{-j}}^{\delta^{-j+1}}| r^2Lq^{(1)}(r^2L)f(y_{Q_{\tau}^j})|^2\frac{dr}{r}
\chi_{Q_{\tau}^j}(x)
 \Big\}^{1\over 2}\Big\|_{L^1(\mathbb{R}^n)}\\
&=  C (\ln\delta)\delta^{ Mn({1\over r}-1) } \Big\|g_{4,\delta}(f)\Big\|_{L^1(\mathbb{R}^n)}.
\end{align*}

By applying Lemma
\ref{lemma H1 to L1 molecule}, we can obtain that
\begin{eqnarray*}
\|R_{\delta,4}(f)\|_{H_L^1(\mathbb{R}^n)} \leq
C(\delta-1) \delta^{ Mn({1\over r}-1)}\|f\|_{H_L^1(\mathbb{R}^n)}.
\end{eqnarray*}

Similarly we can verify the $H_L^1(\mathbb{R}^n)$-norm of the terms $R_{\delta,2}(f)$ and $R_{\delta,3}(f)$ with constants $ C\delta^{ Mn({1\over r}-1) }(\delta-1) $ and $C \delta^{-M}\delta^{ Mn({1\over r}-1) }$, respectively.

Thus, combining the estimates of $R_{\delta,1}(f)$, $R_{\delta,2}(f)$, $R_{\delta,3}(f)$ and
$R_{\delta,4}(f)$, we have, there exists a constant $C_2>1$ such that
\begin{align*}
\|R_\delta(f)\|_{H_L^1(\mathbb{R}^n)}&\leq
C_2\Big(\delta^{ Mn({1\over r}-1) }(\delta-1) +\delta^{-M(1-n({1\over r}-1))}\Big)\|f\|_{H_L^1(\mathbb{R}^n)}\\
&\leq C_2\Big(\delta^{M}(\delta-1) +\delta^{-M(1-n({1\over r}-1))}\Big)\|f\|_{H_L^1(\mathbb{R}^n)},
\end{align*}
since $n/(n+1)<r<1$.  Let us  choose  $\delta$ close to 1, such that
$$ 2C_2\Big(4C_2\Big)^{\frac{1}{1-n({1\over r}-1)}} (\delta-1)< \frac{1}{4}
$$
and choose $\tilde{M}\in {\mathbb R}^+$ such that $C_2 \delta^{-\tilde{M}(1-n({1\over r}-1))}= \frac{1}{4}$. Let $M=[\tilde{M}]+1$, where we use $[x]$ to denote the maximal integer which is not greater than $x$. It follows that
$2^{-(3-n(\frac{1}{r}-1))}\leq C_2 \delta^{-M(1-n({1\over r}-1))}\leq \frac{1}{4}$. It implies that $\delta^{ M} \leq 2(4C_2)^{\frac{1}{1-n({1\over r}-1)}}$, thus
$C_2\delta^{M}(\delta-1)\leq \frac{1}{4}$. It follows that $\|R_\delta(f)\|_{H_L^1(\mathbb{R}^n)}\leq  \frac{1}{2}\|f\|_{H_L^1(\mathbb{R}^n)}$.
\end{proof}

\smallskip

We now apply the results in Theorems \ref{prop of S bd on H1} and \ref{theorem error term of H1} to
prove our main result, Theorem \ref{theorem  wavelet expansion of H1}.

\smallskip

\begin{proof}[Proof of Theorem \ref{theorem  wavelet expansion of H1}]
By Theorem \ref{theorem error term of H1}, we have that for every
$k=0,1,\cdots$, \ \ $\|R_\delta^k(f)\|_{H_L^1(\mathbb{R}^n)}\leq
\frac{1}{2^k}\|f\|_{H_L^1(\mathbb{R}^n)}$. Therefore,  the operator
$T_\delta$ as defined in \eqref{Tdelta} is
invertible, and
\begin{eqnarray}
\big\|T_\delta^{-1}f\big\|_{H_L^1(\mathbb{R}^n)}\leq
\sum\limits_{k=0}^{+\infty}\big\|R_\delta^k(f)\big\|_{H_L^1(\mathbb{R}^n)}\leq
2\big\|f\big\|_{H_L^1(\mathbb{R}^n)}.
\end{eqnarray}
Thus, for every $f\in H_L^1(\mathbb{R}^n)$, by the definition of
operator $T_\delta$, one can write
\begin{eqnarray}\label{H1 expansion}
f=T_\delta\circ T_\delta^{-1}f=\sum_j\sum_{\tau\in I_j}\langle T_\delta^{-1}f,
 {\psi}^*_{j,\tau}\rangle  \psi_{j,\tau}  \hskip 2cm  in \ \ H_L^1(\mathbb{R}^n).
\end{eqnarray}

Applying Lemma \ref{lemma H1 to L1 molecule}, we have
\begin{align*}
&\Big\|\Big( \sum\limits_j \sum_{\tau\in I_j}\big|\langle T_\delta^{-1}f,
{\psi}^*_{j,\tau}\rangle\big|^2
\frac{1}{|Q_{\tau}^j|}\chi_{Q_{\tau}^j}\Big)^{1\over
2}\Big\|_{L^1(\mathbb{R}^n)}\\
 &\quad\leq
\sqrt{\ln\delta}\Big\|\Big( \sum\limits_j
\sum_{\tau\in I_j}
|q_{\delta^{-2j}}(L)(T_\delta^{-1}f)(y_{Q_{\tau}^j})|^2\chi_{Q_{\tau}^j}\Big)^{1\over
2}\Big\|_{L^1(\mathbb{R}^n)}\\
&\quad\leq C\big\|T_\delta^{-1}f\big\|_{H_L^1(\mathbb{R}^n)}\leq
C\big\|f\big\|_{H_L^1(\mathbb{R}^n)}.
\end{align*}

For the left inequality in (\ref{wavelet norm of H1}), using
(\ref{H1 expansion}) we have
\begin{align*}
&\|f\|_{H_L^1(\mathbb{R}^n)}=
\Big\|\Big\{\int_0^\infty \!\!\int_{|x-y|<s} |q_{s^2}(L)(f)(y)|^2 dy{ds\over {s^{n+1}}}\Big\}^{1/2}\Big\|_{L^1(\mathbb{R}^n)}\\
&=\sqrt{\ln\delta}\Big\|\Big\{ \sum_k\int_{\delta^{-k}}^{\delta^{-k+1}}\!\!\int_{|x-y|<s} \big|    \sum_j\sum_{\tau\in I_j}\langle
T_\delta^{-1}f,
 {\psi}^*_{j,\tau}\rangle |Q_{\tau}^j|^{1/2} q_{s^2}q_{j}(y,y_{Q_{\tau}^j})  \Big|^2 dy {ds\over {s^{n+1}}} \Big\}^{1/2}\Big\|_{L^1(\mathbb{R}^n)}\\
&\leq C\ln\delta\Big\|\Big\{\sum_k\Big|    \sum_j
\delta^{-|j-k|}\\
&\quad\times\sum_{\tau\in I_j} |Q_{\tau}^j|
\frac{\delta^{-(j\wedge k)} } {(\delta^{-(j\wedge k)}+|x-y_{Q_{\tau}^j}|)^{n+1}}
|\langle T_\delta^{-1}f,
 |Q_{\tau}^j|^{-1/2}\psi^*_{j,\tau}\rangle| \Big|^2\Big\}^{1/2}\Big\|_{L^1(\mathbb{R}^n)}\\
&\leq C\ln\delta\Big\|\Big\{\sum_k\Big|    \sum_j
  \delta^{-|j-k|}  \delta^{Mn({1\over r}-1)} \delta^{[j-(j\wedge k)]n({1\over r}-1)}\\
&\quad\times \Big\{\mathcal{M}\Big(\sum_{\tau\in I_j} \big|\langle
T_\delta^{-1}f,
 |Q_{\tau}^j|^{-1/2}\psi^*_{j,\tau}\rangle\big|^r\chi_{Q_{\tau}^j}(\cdot)\Big)(x)\Big\}^{1/r}       \Big|^2\Big\}^{1/2}\Big\|_{L^1(\mathbb{R}^n)}\\
&\leq C\frac{\ln\delta}{\delta-1}\ \delta^{Mn({1\over r}-1)}\Big\|\Big\{\sum_j
 \Big\{\mathcal{M}\Big(\sum_{\tau\in I_j} |\langle T_\delta^{-1}f,
 |Q_{\tau}^j|^{-1/2}\psi^*_{j,\tau}\rangle|^r\chi_{Q_{\tau}^j}(\cdot)\Big)\Big\}^{2/r}(x)       \Big\}^{1/2}\Big\|_{L^1(\mathbb{R}^n)}\\
&\leq C\delta^{Mn({1\over r}-1)}\Big\|\Big\{\sum_j
 \sum_{\tau\in I_j} |\langle T_\delta^{-1}f,
 |Q_{\tau}^j|^{-1/2}\psi^*_{j,\tau}\rangle|^2\chi_{Q_{\tau}^j}(x)
 \Big\}^{1/2}\Big\|_{L^1(\mathbb{R}^n)}.
\end{align*}
This completes the proof of Theorem \ref{theorem  wavelet expansion
of H1}.
\end{proof}

\vspace{0.2cm}
\section{Application: A maximal function characterization of $H_L^1(\mathbb{R}^n)$  }
 \setcounter{equation}{0}

In this section, we continue the discussion from Section 4 regarding a characterization of   the Hardy
space   $H_L^1(\mathbb{R}^n)$ in terms of  radial maximal function  under  the following conditions:

\smallskip

  \noindent
 ${\bf   (H1)'}$ \
  $L$ is a second order non-negative self-adjoint operator on $L^2(\mathbb{R}^n)$;

\smallskip

 \noindent
 ${\bf   (H2)'}$ \ The kernel of $e^{-tL}$, denoted by $p_t(x,y)$,
 is a measurable function on $\mathbb{R}^n\times \mathbb{R}^n$ and  satisfies
a Gaussian upper bound, that is
$$
\big|p_t(x,y)\big|\leq C t^{-n/2} \exp\left(-{  {|x-y|^2}\over  ct}\right)
$$
for all $t>0$,  and $x,y\in \mathbb{R}^n,$   where $C$ and $c$   are positive
constants.

The space $H_L^1(\mathbb{R}^n)$  involves some different
characterizations,  see for examples, \cite{AuDM, DL, DLY, DY2, HLMMY, HM, JY, SY1,SY2,YY}.
 If an operator $L$ satisfies conditions
${\bf (H1)'}$ and ${\bf (H2)'}$, then  for any $M\geq 1, 1<q\leq \infty,$
\begin{eqnarray}\label{e5.1}
f\in H_L^1(\mathbb{R}^n)
&\substack{{\rm (i)}\\ \Longleftrightarrow}& N_hf(x):=
\sup_{|y-x|<t}\big|e^{-t^2L}f(y)\big|\in L^1({\mathbb{R}}^n)\nonumber\\
&\substack{{\rm (ii)}\\  \Longleftrightarrow}&
 f {\rm \ has\ an }\  { (1, q, M)}  \ {\rm atomic \ decomposition}\
 f=\sum_{j=0}^{\infty}\lambda_j a_j  \ {\rm with\ }   \sum_{j=0}^{\infty}|\lambda_j|<\infty.
\end{eqnarray}
  Recall that
a function $a\in L^2(\R^n)$ is called an $(1,q,M)$-atom associated to
the operator $L$ if there exist a function $b\in {\mathcal D}(L^M)$
and a ball $B\subset \R^n$ such that
  (i)\ $a=L^M b$;
  (ii) \ {\rm supp}\  $L^{k}b\subset B, \ k=0, 1, \dots, M$;
(iii) \ $\|(r_B^2L)^{k}b\|_{L^q (\R^n)}\leq r_B^{2M}
|B|^{1/q-1/p},\ k=0,1,\dots,M$.

Next, consider the following radial maximal
operator  associated to the heat semigroup generated by the operator $L$,
\begin{eqnarray}\label{e5.2}
f^{+}_h(x)=\sup_{t>0}|e^{-t^2L}f(x)|.
\end{eqnarray}
Define the spaces $H^1_{L, {\rm max}}(\R^n)$
   as the completion of $L^2(\R^n)$ in the
norms given by the $L^1(\R^n)$ norm of the  maximal
function, i.e.,
$
\|f\|_{H^1_{L, {\rm max}}(\R^n)}=\|f^{+}_h\|_{L^1(\R^n)}.
$
From (i) of \eqref{e5.1},  the following
continuous inclusion holds:
\begin{eqnarray}\label{e5.3}
H_{L}^1(\mathbb{R}^n)   \subseteq H^1_{L, {\rm max}}(\R^n).
\end{eqnarray}

The aim of this section is to prove the following result.

\begin{theorem}\label{th5.1}
Suppose  that an operator $L$ satisfies ${\bf   (H1)'}$  and ${\bf   (H2)'}$. In addition, we assume that
the gradient estimate  of the heat kernel of the semigroup $\{e^{-tL}\}_{t>0}$
  satisfies the pointwise bound
$$\leqno{\bf (H3)'} \hspace{5cm}
 |\nabla p_t(x,y)|\leq C t^{-(n+1)/2} \exp\left(-{  {|x-y|^2}\over  ct}\right).
$$
Then we have that  $ H^1_{L, {\rm max}}(\R^n) \subseteq H_{L}^1(\mathbb{R}^n)$
and hence by \eqref{e5.3},
$$
 H^1_{L, {\rm max}}(\R^n) \simeq H_{L}^1(\mathbb{R}^n).
$$
\end{theorem}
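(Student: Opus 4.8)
The plan is to prove the inclusion $H^1_{L,\mathrm{max}}(\mathbb{R}^n)\subseteq H_L^1(\mathbb{R}^n)$; together with the continuous inclusion \eqref{e5.3} this yields the asserted equivalence. Since $H^1_{L,\mathrm{max}}(\mathbb{R}^n)$ is, by definition, the completion of $L^2(\mathbb{R}^n)$ in the norm $f\mapsto\|f^{+}_h\|_{L^1(\mathbb{R}^n)}$, since $H_L^1(\mathbb{R}^n)=H^1_{L,S_L}(\mathbb{R}^n)$ is complete, and since $(f-g)^{+}_h\le f^{+}_h+g^{+}_h$, everything reduces to the a priori bound
\[
\|f\|_{H_L^1(\mathbb{R}^n)}=\|S_Lf\|_{L^1(\mathbb{R}^n)}\le C\,\|f^{+}_h\|_{L^1(\mathbb{R}^n)}\qquad\text{for all }f\in L^2(\mathbb{R}^n)\text{ with }f^{+}_h\in L^1(\mathbb{R}^n),
\]
with $C$ independent of $f$; one then identifies in the standard way the limit of a Cauchy sequence from $L^2(\mathbb{R}^n)$ for the radial-maximal norm with the corresponding element of $H_L^1(\mathbb{R}^n)$. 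For such $f$ one already has $S_Lf\in L^2(\mathbb{R}^n)$ by the quadratic estimate \eqref{L2 esti of square function}, so $S_Lf$ is a priori finite a.e., and once the displayed inequality is available the definition of $H^1_{L,S_L}(\mathbb{R}^n)$ places $f$ in $H_L^1(\mathbb{R}^n)$.

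To prove the a priori bound I would use the frame. As $L$ is self-adjoint (by $\mathbf{(H1)}'$), $\psi^*_{j,\tau}=\psi_{j,\tau}$, and by the remark after Theorem~\ref{theorem  wavelet expansion of H1} I may take the generating function so that $\psi_{j,\tau}(x)=\sqrt{\ln\delta}\,|Q^j_\tau|^{1/2}(\delta^{-2j}L)^2e^{-\delta^{-2j}L}(x,y_{Q^j_\tau})$, which carries the $L$-cancellation of $q(z)=z^2\zeta^2(z)$. By Theorems~\ref{Thm Tdelta} and \ref{th2.2}, $T_\delta$ is bounded and invertible on $L^2(\mathbb{R}^n)$; set $g:=T_\delta^{-1}f\in L^2(\mathbb{R}^n)$, so that $f=T_\delta g=\sum_j\sum_{\tau\in I_j}c_{j,\tau}\psi_{j,\tau}$ in $L^2(\mathbb{R}^n)$, where $c_{j,\tau}=\langle g,\psi_{j,\tau}\rangle$ and $|c_{j,\tau}|^2|Q^j_\tau|^{-1}=\ln\delta\,|(\delta^{-2j}L)^2e^{-\delta^{-2j}L}g(y_{Q^j_\tau})|^2$. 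Expanding $q_{s^2}(L)f=\sum_{j,\tau}c_{j,\tau}\,q_{s^2}(L)\psi_{j,\tau}$ inside $S_Lf$ and running the chain of estimates in the proof of Theorem~\ref{prop of S bd on H1} — the kernel bounds of Proposition~\ref{s-prop3.3} for $q_{s^2}(L)\psi_{j,\tau}$ (which behaves like $q_{s^2}(L)q_{\delta^{-2j}}(L)$), the Frazier--Jawerth inequality \eqref{claim FJ}, and the vector-valued maximal theorem — the portion of that argument preceding its final application of Lemma~\ref{lemma H1 to L1 molecule} uses only $g\in L^2(\mathbb{R}^n)$ and yields
\[
\|S_Lf\|_{L^1(\mathbb{R}^n)}\le C\,\delta^{Mn(1/r-1)}\,\|g_{1,\delta}(L)g\|_{L^1(\mathbb{R}^n)}=C\,\delta^{Mn(1/r-1)}\,\Bigl\|\Bigl(\sum_j\sum_{\tau\in I_j}|c_{j,\tau}|^2|Q^j_\tau|^{-1}\chi_{Q^j_\tau}\Bigr)^{1/2}\Bigr\|_{L^1(\mathbb{R}^n)}.
\]

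Thus the whole theorem reduces to the coefficient estimate $\|g_{1,\delta}(L)(T_\delta^{-1}f)\|_{L^1(\mathbb{R}^n)}\le C\,\|f^{+}_h\|_{L^1(\mathbb{R}^n)}$, and this is where the real work — and the use of $\mathbf{(H3)}'$ — lies. Its core is the assertion that the discrete heat square function is dominated by the radial maximal function, i.e. $\|g_{1,\delta}(L)h\|_{L^1(\mathbb{R}^n)}\le C\,\|h^{+}_h\|_{L^1(\mathbb{R}^n)}$ for $h\in L^2(\mathbb{R}^n)$, which I would prove by a Calder\'on--Zygmund/stopping-time decomposition over the cubes $Q^j_\tau$ driven by the level sets $\Omega_\ell=\{x:\mathcal{M}(h^{+}_h)(x)>2^\ell\}$: writing $(\delta^{-2j}L)^2e^{-\delta^{-2j}L}=[(\delta^{-2j}L)^2e^{-\delta^{-2j}L/2}]\,e^{-\delta^{-2j}L/2}$, the outer factor has a Gaussian kernel (from $\mathbf{(H2)}'$) and $|e^{-\delta^{-2j}L/2}h|\le h^{+}_h$, while $\mathbf{(H3)}'$ is exactly what controls the oscillation of $e^{-t^2L}h$ across $Q^j_\tau$ and the off-diagonal tails, forcing the normalized coefficients $|c_{j,\tau}|\,|Q^j_\tau|^{-1/2}$ of the cubes lying outside $\Omega_\ell$ at level $2^\ell$ to be $O(2^\ell)$, so that $\|g_{1,\delta}(L)h\|_{L^1}\lesssim\sum_\ell 2^\ell|\Omega_\ell|\lesssim\sum_\ell 2^\ell|\{h^{+}_h>c2^\ell\}|\lesssim\|h^{+}_h\|_{L^1}$ by the weak $(1,1)$ bound for $\mathcal{M}$. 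Finally one must pass from $h=T_\delta^{-1}f$ back to $f$; for this I would show that $R_\delta=I-T_\delta$ has norm $<1$ on the radial class $\{h\in L^2:h^{+}_h\in L^1\}$ by repeating the four-term decomposition of Theorems~\ref{th2.2} and \ref{theorem error term of H1} with $e^{-t^2L}$ in place of $q_{t^2}(L)$ (again $\mathbf{(H3)}'$ entering the oscillation terms), so that $T_\delta^{-1}$ is bounded there. The main obstacle is precisely this passage from the pointwise heat-semigroup values at the discrete centres $y_{Q^j_\tau}$ to the radial maximal function: it is the analytic heart of any radial-maximal characterization and the principal place where $\mathbf{(H3)}'$ is indispensable; everything else is either routine or a repetition, with the heat semigroup in place of $q_{t^2}(L)$, of the arguments already developed in Sections 3 and 4.
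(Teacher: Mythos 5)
Your route is genuinely different from the paper's, and as sketched it has a fatal gap at its centre. The paper never attempts to dominate the square function (or $g_{1,\delta}(L)$) by the radial maximal function directly. Instead it invokes the known equivalence (i) of \eqref{e5.1} between $\|f\|_{H_L^1}$ and the \emph{nontangential} maximal function $\|N_hf\|_1$, proves the Calder\'on--Torchinsky good-$\lambda$ inequality $|\{N_hf>s\}\cap\{\sup_{|x-y|<2t}|t\nabla e^{-t^2L}f(y)|\le s/r\}|\le Ar^{-n}|\{f^+_h>s/2\}|$ by nothing more than the mean value theorem and the weak $(1,1)$ bound for $\mathcal M$, and uses the frame exactly once, in Lemma \ref{le6.3}, to show $\|\sup_{|x-y|<2t}|t\nabla e^{-t^2L}f(y)|\|_1\le C\|f\|_{H_L^1}\approx\|N_hf\|_1$ via a Peetre-type maximal estimate on the frame coefficients (this is the only place ${\bf(H3)'}$ enters); the resulting term $rC_2\|N_hf\|_1$ is then absorbed by taking $r$ small.

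The gap in your proposal is the core lemma $\|g_{1,\delta}(L)h\|_1\le C\|h^+_h\|_1$, which is essentially the theorem itself in disguise, and the stopping-time sketch does not prove it. Even granting that every normalized coefficient $|c_{j,\tau}|\,|Q_\tau^j|^{-1/2}$ attached to a cube meeting the complement of $\Omega_\ell$ is $O(2^\ell)$, at a point $x\notin\Omega_\ell$ the square function contains one such term of size $O(2^{2\ell})$ for \emph{every} scale $j\in\mathbb Z$, so the sum over $j$ diverges: a uniform pointwise bound on individual coefficients cannot control the $\ell^2$-sum over scales. What is actually needed is a local $L^2$ estimate of the square function over the good region (the Fefferman--Stein/Calder\'on sawtooth argument), and obtaining that from the \emph{radial} maximal function alone is precisely the nontangential-versus-radial comparison that the paper handles by the good-$\lambda$ route; you cannot shortcut it with coefficient bounds. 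Your closing step is also unsupported: the claim that $R_\delta=I-T_\delta$ is a contraction for the norm $h\mapsto\|h^+_h\|_1$ is presented as a repetition of Theorems \ref{th2.2} and \ref{theorem error term of H1}, but those proofs rest on the $L^p$ and $H_L^1$ boundedness of the auxiliary $g$-functions, the molecular decomposition, and the cancellation $q(z)=z^2\zeta^2(z)$ in the reproducing formula; none of this machinery exists for the radial-maximal norm, and constructing it would amount to proving the theorem by other means. The paper sidesteps both difficulties by only ever needing $T_\delta^{-1}$ as a bounded operator on $H_L^1$, together with the elementary good-$\lambda$ comparison between $N_hf$ and $f^+_h$.
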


\begin{remark} We should note that the equivalency of the radial maximal function characterization
 and the nontangential maximal function characterizations of $H_L^1(\mathbb{R}^n)$ have been obtained in \cite{YY,SY2}.
 Our Theorem~\ref{th5.1} provides  a different proof  by using  the frame decomposition.
\end{remark}

The proof of Theorem~\ref{th5.1} is based on the following lemma.

\begin{lemma}\label{le6.3} Suppose  that an operator $L$ satisfies ${\bf   (H1)'}$, ${\bf   (H2)'}$ and ${\bf   (H3)'}$.
For any $f\in H_{L,max}^1(\mathbb{R}^n)\cap L^2(\mathbb{R}^n)$, there exists a constant $C>0$ independent of $f$
such that
\begin{eqnarray}\label{e lemma maximal}
\left\| \sup_{|x-y|< 2t} |t\nabla e^{-t^2L} f(y)|  \right\|_{L^1(\mathbb{R}^n)}
\leq C \big\| f  \big\|_{H_{L}^1(\mathbb{R}^n)}.
\end{eqnarray}
\end{lemma}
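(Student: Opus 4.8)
The plan is to feed the frame decomposition of Theorem~\ref{theorem  wavelet expansion of H1} into the gradient maximal operator $f\mapsto\sup_{|x-y|<2t}|t\nabla e^{-t^2L}f(y)|$ and then repeat, essentially verbatim, the computation used to bound $T_\delta$ on $H_L^1(\mathbb{R}^n)$ in Theorem~\ref{prop of S bd on H1}. We may assume $\|f\|_{H_L^1(\mathbb{R}^n)}<\infty$, so $f\in H_L^1(\mathbb{R}^n)\cap L^2(\mathbb{R}^n)$ (the hypothesis is stated for $f\in H^1_{L,\mathrm{max}}\cap L^2$, but only $f\in H^1_L\cap L^2$ is used). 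By the remark on replacing $q_{\delta^{-2j}}(L)$ with $(\delta^{-2j}L)^{k}e^{-\delta^{-2j}L}$ (valid for any $k\in\mathbb N$) we may take $q(z)=z^{k}e^{-z}$ with $k$ a large even integer (so $q(z)=z^2\zeta^2(z)$ with $\zeta(z)=z^{(k-2)/2}e^{-z/2}$, admissible with $\alpha=(k-2)/2$ and arbitrarily large $\beta$), and since $L$ is self-adjoint, $\psi^*_{j,\tau}=\psi_{j,\tau}$. As $T_\delta$ is bounded and invertible on $L^2(\mathbb{R}^n)$, the expansion $f=\sum_j\sum_{\tau\in I_j}\langle T_\delta^{-1}f,\psi_{j,\tau}\rangle\psi_{j,\tau}$ also converges in $L^2(\mathbb{R}^n)$; the kernel bounds below make the series converge locally uniformly after $t\nabla e^{-t^2L}$ is applied, so one may differentiate term by term.

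\emph{Step 1: the key gradient kernel estimate.} Fix $j$ and $t\in[\delta^{-k},\delta^{-k+1}]$, and set $s_0^2:=t^2+\delta^{-2j}$, so $s_0\asymp\delta^{-(j\wedge k)}$. Up to the constant $\sqrt{\ln\delta}\,|Q_\tau^j|^{1/2}$, the kernel of $e^{-t^2L}\psi_{j,\tau}$ equals that of $(\delta^{-2j}L)^{k}e^{-s_0^2L}=e^{-\frac12 s_0^2L}\circ\big[(\delta^{-2j}L)^{k}e^{-\frac12 s_0^2L}\big]$. Putting $\nabla_y$ on the first factor and using $({\bf H3})'$, namely $|\nabla_y p_{s_0^2/2}(y,w)|\le Cs_0^{-(n+1)}\exp(-c|y-w|^2/s_0^2)$, together with Proposition~\ref{propHeat} applied to $\Phi(z)=z^{k}e^{-z}$, which gives $|K_{(\delta^{-2j}L)^{k}e^{-\frac12 s_0^2L}}(w,y')|\le C(\delta^{-2j}/s_0^2)^{k}s_0^{-n}(1+|w-y'|/s_0)^{-(n+2k)}$, and then convolving, we obtain
\begin{equation*}
\big|t\nabla_y e^{-t^2L}\psi_{j,\tau}(y)\big|\le C\sqrt{\ln\delta}\,|Q_\tau^j|^{1/2}\,\frac{t}{s_0}\Big(\frac{\delta^{-2j}}{s_0^2}\Big)^{k}\,\frac{s_0^{-n}}{(1+|y-y_{Q_\tau^j}|/s_0)^{n+1}}.
\end{equation*}
If $\delta^{-j}\ge t$ then $s_0\asymp\delta^{-j}$, $\delta^{-2j}/s_0^2\asymp1$, $t/s_0\asymp t\delta^{j}\asymp\delta^{-|k-j|}$; if $\delta^{-j}<t$ then $s_0\asymp t$, $t/s_0\asymp1$, $(\delta^{-2j}/s_0^2)^{k}\asymp\delta^{-2k|k-j|}\le\delta^{-|k-j|}$. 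Hence in either case
\begin{equation*}
\tag{$\ast$}\big|t\nabla_y e^{-t^2L}\psi_{j,\tau}(y)\big|\le C\sqrt{\ln\delta}\,|Q_\tau^j|^{1/2}\,\delta^{-|k-j|}\,\frac{\delta^{-(j\wedge k)}}{\big(\delta^{-(j\wedge k)}+|y-y_{Q_\tau^j}|\big)^{n+1}},\qquad t\in[\delta^{-k},\delta^{-k+1}].
\end{equation*}

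\emph{Step 2: reduction to $g_{1,\delta}$.} Estimate $(\ast)$ is exactly the role played by $|K_{q_{s^2}(L)q_{\delta^{-2j}}(L)}(y,y_{Q_\tau^j})|$ in Theorem~\ref{prop of S bd on H1}. Inserting the frame expansion, using $|\langle T_\delta^{-1}f,\psi_{j,\tau}\rangle|=\sqrt{\ln\delta}\,|Q_\tau^j|^{1/2}|q_{\delta^{-2j}}(L)(T_\delta^{-1}f)(y_{Q_\tau^j})|$, then $(\ast)$, and then (for $|x-y|<2t\le4\delta^{-(j\wedge k)}$) the elementary inequality $\delta^{-(j\wedge k)}+|y-y_{Q_\tau^j}|\gtrsim\delta^{-(j\wedge k)}+|x-y_{Q_\tau^j}|$, we get for $t\in[\delta^{-k},\delta^{-k+1}]$
\begin{equation*}
\big|t\nabla e^{-t^2L}f(y)\big|\le C\ln\delta\sum_j\delta^{-|k-j|}\sum_{\tau\in I_j}|Q_\tau^j|\,\frac{\delta^{-(j\wedge k)}}{\big(\delta^{-(j\wedge k)}+|x-y_{Q_\tau^j}|\big)^{n+1}}\,\big|q_{\delta^{-2j}}(L)(T_\delta^{-1}f)(y_{Q_\tau^j})\big|.
\end{equation*}
The right-hand side no longer depends on $(y,t)$ in the fixed dyadic range, so after applying the Frazier--Jawerth estimate \eqref{claim FJ} for some $n/(n+1)<r<1$ to the $\tau$-sum, one passes the supremum over $t>0$ through $\sup_{t>0}(\cdot)\le\big(\sum_k(\cdot)^2\big)^{1/2}$, uses Cauchy--Schwarz in $j$ together with the summability \eqref{e4.10-s}, and then the Fefferman--Stein vector-valued maximal inequality with exponent $2/r$; this is line for line the computation in Theorem~\ref{prop of S bd on H1} and gives
\begin{align*}
\Big\|\sup_{|x-y|<2t}|t\nabla e^{-t^2L}f(y)|\Big\|_{L^1(\mathbb{R}^n)}
&\le C\,\delta^{Mn(1/r-1)}\Big\|\Big(\sum_j\sum_{\tau\in I_j}\big|q_{\delta^{-2j}}(L)(T_\delta^{-1}f)(y_{Q_\tau^j})\big|^2\chi_{Q_\tau^j}\Big)^{1/2}\Big\|_{L^1(\mathbb{R}^n)}\\
&=C\,\delta^{Mn(1/r-1)}(\ln\delta)^{-1/2}\big\|g_{1,\delta}(L)(T_\delta^{-1}f)\big\|_{L^1(\mathbb{R}^n)}.
\end{align*}

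\emph{Step 3: conclusion.} By Lemma~\ref{lemma H1 to L1 molecule} (with $i=1$) and the $H_L^1$-boundedness of $T_\delta^{-1}$ (from Theorem~\ref{theorem error term of H1}), $\|g_{1,\delta}(L)(T_\delta^{-1}f)\|_{L^1}\le C\|T_\delta^{-1}f\|_{H_L^1}\le C\|f\|_{H_L^1}$; since $\delta$ and $M$ are fixed constants, the $\delta$-dependent factors are harmless and \eqref{e lemma maximal} follows. The main obstacle is Step~1: producing the almost-orthogonality factor $\delta^{-|k-j|}$ from the gradient of the heat kernel using only the pointwise bound $({\bf H3})'$ and no regularity of $\nabla p_t$. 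The point is that after the splitting $(\delta^{-2j}L)^{k}e^{-s_0^2L}=e^{-\frac12 s_0^2L}\circ[(\delta^{-2j}L)^{k}e^{-\frac12 s_0^2L}]$ this factor is supplied purely by the scaling quantities $t/s_0$ and $(\delta^{-2j}/s_0^2)^{k}$, so $({\bf H3})'$ only has to deliver a Gaussian bound on $\nabla p_{s_0^2/2}$; once $(\ast)$ is in hand, everything is a transcription of the proof of Theorem~\ref{prop of S bd on H1}.
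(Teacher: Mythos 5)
Your proof is correct, and while its first half matches the paper's, its second half takes a genuinely different route. Both arguments insert the frame expansion with $q(z)=z^{m}e^{-z}$ (Remark 1.5) into $t\nabla e^{-t^2L}$ and extract the almost-orthogonality factor $\delta^{-|k-j|}$ from $t\nabla e^{-t^2L}\circ(\delta^{-2j}L)^{m}e^{-\delta^{-2j}L}$ by splitting off half of the combined heat semigroup, so that the pointwise Gaussian bound ${\bf (H3)'}$ on $\nabla p_t$ is all that is needed; your estimate $(\ast)$ is the paper's kernel bound specialized to spatial decay $n+1$. After that the arguments diverge. The paper keeps decay $n+\eta$ with $\eta>n$, dominates the double sum by a Peetre-type weighted supremum $\sup_{j,\tau}|\langle T_\delta^{-1}f,\psi_{j,\tau}\rangle|\,(1+|x-y_{Q_\tau^j}|/\delta^{-j})^{-\lambda}$ with $\lambda>n$, and then invokes Theorem 2.3 of \cite{CT} together with the nontangential maximal characterization (i) of \eqref{e5.1} to reach $\|N_h(T_\delta^{-1}f)\|_{L^1}=\|T_\delta^{-1}f\|_{H^1_L}$. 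You instead dominate $\sup_{t>0}$ by the $\ell^2$ sum over dyadic $t$-scales, rerun the Frazier--Jawerth and Fefferman--Stein computation of Theorem \ref{prop of S bd on H1}, and reduce to $\|g_{1,\delta}(L)(T_\delta^{-1}f)\|_{L^1}$, which Lemma \ref{lemma H1 to L1 molecule} and Theorem \ref{theorem error term of H1} control by $\|f\|_{H^1_L}$. Your route stays entirely inside the machinery of Sections 3--4 and needs only $(n+1)$-decay, at the cost of repeating the longer square-function computation; the paper's route is shorter on the page but imports the Calder\'on--Torchinsky lemma and the nontangential maximal characterization. Two cosmetic remarks: your letter $k$ does double duty as the power in $z^{k}e^{-z}$ and as the dyadic index of $t$, and the term-by-term application of $t\nabla e^{-t^2L}$ to the infinite series deserves the same brief $L^2$-convergence justification the paper also leaves implicit; neither affects correctness.
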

\begin{proof}
First, we assume that   $f\in H_{L,max}^1(\mathbb{R}^n)\cap L^2(\mathbb{R}^n)$.
For any $x,y\in \mathbb{R}^n$ and $t>0$ with $|x-y|<t$, we apply Theorem \ref{main1} and
 Remark  1.5
with $k>n$ to obtain
\begin{eqnarray*}
&&\hspace{-0.6cm}|t\nabla e^{-t^2L} f(y)| =\left|   t\nabla e^{-t^2L}\Big(\sum\limits_j\sum\limits_{\tau\in I_j}\langle T_\delta^{-1}f,
 \psi_{j,\tau}\rangle  \psi_{j,\tau} \Big)(y) \right|\\
 &&=\ln\delta \left|   \sum\limits_j\sum\limits_{\tau\in I_j} |Q_{\tau}^j| \big\langle  T_\delta^{-1}f(\cdot),
 (\delta^{-2j}L)^k e^{-\delta^{-2j}L}(\cdot,y_{Q_{\tau}^j})\big\rangle
   t\nabla e^{-t^2L} (\delta^{-2j}L)^k e^{-\delta^{-2j}L}(y,y_{Q_{\tau}^j})  \right|.
\end{eqnarray*}
From the property of the semigroup $\{e^{-tL}\}_{t>0}$, we  can show that  for every $\eta>n$,
\begin{eqnarray*}
\left|t\nabla e^{-t^2L} (\delta^{-2j}L)^k e^{-\delta^{-2j}L}(y,y_{Q_{\tau}^j})\right|\leq
C\left( {\delta^{-2j}\over t^2} \right)^k \wedge \left( {t\over \delta^{-j}} \right)
 {(t\vee \delta^{-j})^{\eta} \over \left(t\vee \delta^{-j}+ |y-y_{Q_{\tau}^j}| \right )^{n+\eta}}.
\end{eqnarray*}
Note also for some $\lambda<\min\{\eta, k\}$,
\begin{eqnarray*}
 \sum\limits_j\sum\limits_{\tau\in I_j}\left( {\delta^{-2j}\over t^2} \right)^k \wedge \left( {t\over \delta^{-j}} \right)
 {(t\vee \delta^{-j})^\eta \over (t\vee \delta^{-j}+ |y-y_{Q_{\tau}^j}|  )^{n+\eta}   }
 \left(1+ { |x-y_{Q_{\tau}^j}|\over  \delta^{-j} }  \right)^{\lambda} |Q_{\tau}^j|\leq C.
\end{eqnarray*}
 This  yields
\begin{eqnarray*}
&&\hspace{-0.6cm}\left|t\nabla e^{-t^2L} f(y)\right| \\
 &&\leq  C \sum\limits_j\sum\limits_{\tau\in I_j}\big|\big\langle T_\delta^{-1}f(\cdot),
 (\delta^{-2j}L)^k e^{-\delta^{-2j}L}(\cdot,y_{Q_{\tau}^j})\big\rangle\big|
\left( {\delta^{-2j}\over t^2} \right)^k \wedge \left( {t\over \delta^{-j}} \right)
 {(t\vee \delta^{-j})^\eta \over \left(t\vee \delta^{-j}+ |y-y_{Q_{\tau}^j}|  \right)^{n+\eta}   } |Q_{\tau}^j|\\
   &&\leq  C \sup_{j, \tau\in I_j}\left|\big\langle T_\delta^{-1}f(\cdot),
 (\delta^{-2j}L)^k e^{-\delta^{-2j}L}(\cdot,y_{Q_{\tau}^j})\big\rangle\right|
 \left(1+ { |x-y_{Q_{\tau}^j}|\over  \delta^{-j} }  \right)^{-\lambda}\\
  &&\hskip 3.1cm \times \sum\limits_j\sum\limits_{\tau\in I_j}\left( {\delta^{-2j}\over t^2} \right)^k \wedge
 \left( {t\over \delta^{-j}} \right)
 {(t\vee \delta^{-j})^\eta \over \left(t\vee \delta^{-j}+ |y-y_{Q_{\tau}^j}|  \right)^{n+\eta}   }
 \left(1+ { |x-y_{Q_{\tau}^j}|\over  \delta^{-j} }  \right)^{\lambda} |Q_{\tau}^j|\\
 &&\leq  C \sup_{j, \tau\in I_j}\left|\big\langle T_\delta^{-1}f(\cdot),
 (\delta^{-2j}L)^k e^{-\delta^{-2j}L}(\cdot,y_{Q_{\tau}^j})\big\rangle\right|
 \left(1+ { |x-y_{Q_{\tau}^j}|\over  \delta^{-j} }  \right)^{-\lambda}.
\end{eqnarray*}
Using Theorem 2.3 in \cite{CT},  we decompose the cubes $\{Q_{\tau}^j\}$
into annuli according to the distance $|x-y_{Q_{\tau}^j}|$ with respect to $\delta^{-2j}$
to obtain
\begin{align*}
\left\| \sup_{|x-y|< 2t} |t\nabla e^{-t^2L} f(y)|  \right\|_{L^1(\mathbb{R}^n)}
&\leq C
 \left\|  \sup_{j }\sup_{ \tau\in I_j}\big|\big\langle T_\delta^{-1}f(\cdot),
 (\delta^{-2j}L)^k e^{-\delta^{-2j}L}(\cdot,y_{Q_{\tau}^j})\big\rangle\big| \left(1+ { |x-y_{Q_{\tau}^j}|\over  \delta^{-j} }
   \right)^{-\lambda}   \right\|_{L^1(\mathbb{R}^n)}\\
&\leq C\|N_h\left(T_\delta^{-1}f\right) \|_{L^1(\mathbb{R}^n)}\\
&= C\|T_\delta^{-1}f\|_{H_L^1(\mathbb{R}^n)}\\
 &\leq C\|f\|_{H_L^1(\mathbb{R}^n)},
\end{align*}
since $\lambda>n$.  By a  density argument, we obtain \eqref{e lemma maximal}. Hence,
the  proof of  Lemma~\ref{le6.3} is complete.
\end{proof}

\bigskip

\noindent
{\bf Proof of Theorem~\ref{th5.1}}.\  By (i) of \eqref{e5.1}, it suffices to show
\begin{align}\label{radial maximal}
\|N_hf\|_1\leq C\|f^+_h\|_1.
\end{align}
To prove this, we use the ideas in \cite{CT}.
We claim that, there exists a constant $A$, independent of $f$ and $r$, such that the following inequality holds:
\begin{align}\label{e5.7}
\|N_hf\|_1\leq  2A r^{-n}\|f^+_h\|_1+ r\Big\|\sup\limits_{|x-y|<2t}|t\nabla e^{-t^2L}f(y)|\Big\|_1
\end{align}
for any $r\in (0,1]$.
If  the  claim \eqref{e5.7} is proven,   we then  apply Lemma \ref{le6.3} to get
$
\|N_hf \|_1\leq 2A r^{-n}\|f^+_h\|_1+ r C_2\|N_hf \|_1.
$
Finally, we can choose $r$ small enough so that $C_2 r<1$ to obtain
\begin{align*}
\|N_hf \|_1\leq \frac{2A}{1-rC_2} r^{-n}\|f^+_h\|_1
\end{align*}
as desired.

To prove the  claim \eqref{e5.7}, it suffices  to show  the following inequality:
\begin{align}\label{good lambda ineq}
\Big|\{N_hf>s\} \cap \{\sup\limits_{|x-y|<2t}|t\nabla e^{-t^2L}f(y)| \leq sr^{-1}\}\Big|
&\leq A r^{-n}\big|\{ f^+_h>s/2\}\big|,
\end{align}
for any $0<r\leq 1$ and $s>0$. Indeed, if (\ref{good lambda ineq}) is proven,  one   writes
\begin{align*}
\|N_hf \|_1&=\int_0^\infty \big| \{N_hf>s \} \big| \, ds \\
&\leq A r^{-n}\int_0^\infty  |\{ f^+_h>s/2\}| \, ds+ \int_0^\infty
 |\{\sup\limits_{|x-y|<2t}|t\nabla e^{-t^2L}f(y)|  > sr^{-1}\}| ds\nonumber\\
&= 2A r^{-n}\big\|f^+_h\big\|_1+r\big\|\sup\limits_{|x-y|<2t}|t\nabla e^{-t^2L}f(y)|\big\|_1.
\end{align*}

  Now,  let us  verify  (\ref{good lambda ineq}).
  Let $x_0\in \{N_hf>s\} \cap \{\sup\limits_{|x-y|<2t}|t\nabla e^{-t^2L}f(y)| \leq s/r\}$.
  Then  there exist  $y_0\in {\mathbb R}^n$ and $t_0>0$  such that $|x_0-y_0|<t_0$,
$|e^{-t_0^2L}f(y_0)|>s$ and  $|t_0\nabla e^{-t_0^2L}f(z)|\leq s/r$  whenever $|x_0-z|<2t_0$.
Note that
\begin{align*}
|e^{-t_0^2L}f(z)|&\geq |e^{-t_0^2L}f(y_0)|-|e^{-t_0^2L}f(y_0)-e^{-t_0^2L}f(z)|\\
&\geq |e^{-t_0^2L}f(y_0)|-|\nabla e^{-t_0^2L}f(\xi)||y_0-z| \nonumber
\end{align*}
for some $\xi$ which lies in between $y_0$ and $z$.
If $|z-y_0|<\frac{r}{2}t_0$ and $r\in (0, 1]$, then $|z-x_0|< 2t_0$. It tells us that
\begin{align*}
|e^{-t_0^2L}f(z)|\geq |e^{-t_0^2L}f(y_0)|-\frac{s}{t_0r}|z-y_0|\geq s- \frac{s}{2}=\frac{s}{2},
\end{align*}
which implies that $B(y_0, \frac{rt_0}{2})\subset  \{ f^+_h>s/2 \}$. This, in combination with the fact
that
$$
\frac{|B(x_0,2t_0)\cap \{ f^+_h>s/2 \}|}{|B(x_0,2t_0)|} \geq \frac{|B(y_0, \frac{t_0r}{2})|}{|B(x_0,2t_0)|}=\frac{r^n}{4^n},
$$
yields
\begin{align}\label{e6.6}
\{N_hf>s\} \cap \left\{ \sup\limits_{|x-y|<2t}|t\nabla e^{-t^2L}f(y)| \leq s/r\right \} \subset
\left\{\mathcal{M} \chi_{\{f^+_h>s/2\}} > {r^n}/{4^n} \right \}.
\end{align}
By the weak (1,1) boundedness of   $\mathcal{M}$,
$$
\left|\{N_hf>s\} \cap \left\{\sup\limits_{|x-y|<2t}|t\nabla e^{-t^2L}f(y)| \leq s/r \right \} \right |
\leq A r^{-n} \left |\{f^+_h>s/2\} \right |,
$$
which  proves  (\ref{good lambda ineq}).     The proof  of Theorem~\ref{th5.1} is complete.
   \hfill{}$\Box$

\bigskip

\noindent
{\bf Acknowledgments.} \
The authors would like to thank the referees for carefully reading the manuscript and for
offering  valuable suggestions, which made the paper complete, accurate and more readable.

\medskip

\end{document}